\documentclass[12pt]{amsart}
\usepackage[dvips]{color}
\usepackage{amsmath}
\usepackage{amsxtra}
\usepackage{amscd}
\usepackage{amsthm}
\usepackage{amsfonts}
\usepackage{amssymb}
\usepackage{eucal}
\usepackage{epsfig}
\usepackage{graphics}
\usepackage{accents}

\usepackage[all]{xy}
%%%%%%%%%%%%%%%%%%%%%%%%%%%%%%%%%%%%%%%%%%%%%%%%%%%%
%\overfullrule=10pt
\textwidth=17cm
\textheight=22cm
\hoffset=-2cm
\baselineskip=18pt plus 3pt

%%%%%%%%%%%%%%%%%%%%%%%%%%%%%%%%%%%%%%%%%%%%%%%%%%%%%%
\numberwithin{equation}{section}
\newtheorem{thm}{Theorem}[section]
\newtheorem{prop}[thm]{Proposition}
\newtheorem{lem}[thm]{Lemma}

\newtheorem{cor}[thm]{Corollary}
\newtheorem{conj}[thm]{Conjecture}

\newcommand{\nn}{\nonumber}

%%%%%%%%%%%%%%%%%%%%%%%%%%%%%%%%%%%%%%%%%%%%%%%%%%%%
\newcommand{\bra}[1]{\langle #1 |}        %bra
\newcommand{\ket}[1]{{| #1 \rangle}}      %ket
  %bracket
\newcommand{\ds}[1]{\displaystyle #1}

%%%%%%%%%%%%%  mathbb  %%%%%%%%%%%%%%%%%%%%%%%%%%%%%%%%%%%%%%%%%%
\newcommand{\C}{{\mathbb C}}
\newcommand{\Z}{{\mathbb Z}}

%%%%%%%%%%%%%  mathcal  %%%%%%%%%%%%%%%%%%%%%%%%%%%%%%%%%%%%%%%%%%

\newcommand{\B}{{\mathcal B}}
\newcommand{\E}{{\mathcal E}}
\newcommand{\F}{\mathcal F}
\newcommand{\cH}{\mathcal H}

\newcommand{\N}{{\mathcal N}}

\newcommand{\cP}{\mathcal{P}}
\newcommand{\cR}{\mathcal{R}}

%%%%%%%%%%%  overline  %%%%%%%%%%%%%%%%%%%%%%
%\newcommand{\Eb}{\overline{\E}}

%%%%%%%%%%%  widetile  %%%%%%%%%%%%%%%%%%%%%%

%%%%%%%%%  hat  %%%%%%%%%%%%%%%%%%%%%%%%%%
%\newcommand{\Xh}{\hat{X}}

%%%%%%%%%  check  %%%%%%%%%%%%%%%%%%%%%%%%%%

%%%%%%%%  mathfrak %%%%%%%%%%%%%%%%%%%%%%%%
\newcommand{\ab}{\mathbf{\mathfrak{a}}}

\newcommand{\bs}{\boldsymbol}

\newcommand{\GS}{\mathfrak{S}}
\newcommand{\gl}{\mathfrak{gl}}

%%%%%%%  boldsymbol %%%%%%%%%%%%%%%%%%%%%%

%%%%%%%%%%  greek  %%%%%%%%%%%%%%%%%%%%

\newcommand{\la}{\lambda}
\newcommand{\La}{\Lambda}

%%%%%%%%%%%  mathbf %%%%%%%%%%%%%%%%%%%%

\newcommand{\bw}{\mathbf{w}}
\newcommand{\bz}{\mathbf{z}}
%%%%%%%%%%%  sancerif %%%%%%%%%%%%%%%%%%%%

\newcommand{\ssF}{\textsf{F}}
\newcommand{\ssG}{\textsf{G}}
\newcommand{\ssI}{\textsf{I}}
\newcommand{\ssT}{\textsf{T}}

%%%%%%%%%%%  rm  %%%%%%%%%%%%%%%%%%%%%%%%%%%%%%%

\newcommand{\id}{{\rm id}}

\newcommand{\sgn}{\mathop{\rm sgn}}
\newcommand{\Sym}{\mathrm{Sym}}
\newcommand{\Tr}{{\rm Tr}}

%%%%%%%%%%%%%%%%%%%%%%%%%%%%%%%%%%%%%%%%%%%%%%%%%%%%%%%%%%%%%

\newcommand{\Cp}{C^{\perp}}
\newcommand{\Dp}{D^{\perp}}

\newcommand{\mc}{\mathcal}
\newcommand{\al}{\alpha}

%%%%%%%%%%%%%%%%%%%%%%%%%%%%%%%%%%%%%%%%%%%%%%%%%%%%%%%%

\newcommand{\cN}{\mathcal{N}}

\newcommand{\g}{\mathfrak{g}}

\newcommand{\bK}{\bar{K}}
\newcommand{\hE}{\hat{E}}
\newcommand{\hF}{\hat{F}}
\newcommand{\hK}{\hat{K}}

\begin{document}

\begin{title}[Integrals of motion from quantum toroidal algebras]
{Integrals of motion from quantum toroidal algebras}
\end{title}
\author{B. Feigin, M. Jimbo, and E. Mukhin}
\address{BF: National Research University Higher School of Economics, 
Russian Federation, International Laboratory of Representation Theory 
and \newline Mathematical Physics, Russia, Moscow,  101000,  
Myasnitskaya ul., 20 and Landau Institute for Theoretical Physics,
Russia, Chernogolovka, 142432, pr.Akademika Semenova, 1a
}
\email{bfeigin@gmail.com}
\address{MJ: Department of Mathematics,
Rikkyo University, Toshima-ku, Tokyo 171-8501, Japan}
\email{jimbomm@rikkyo.ac.jp}
\address{EM: Department of Mathematics,
Indiana University-Purdue University-Indianapolis,
402 N.Blackford St., LD 270,
Indianapolis, IN 46202, USA}\email{emukhin@iupui.edu}

\dedicatory{To the memory of Petr Kulish}
\begin{abstract} 
We identify the Taylor coefficients of the transfer matrices corresponding to quantum toroidal algebras
with the elliptic local and non-local integrals of motion
introduced by Kojima, Shiraishi, Watanabe, and one of the authors.

That allows us to prove the Litvinov conjectures on the Intermediate Long Wave model.

We also discuss the $({\mathfrak {gl}}_m,{\mathfrak {gl}}_n)$ duality of XXZ models in
quantum toroidal setting and the implications for the quantum KdV model. In particular, we conjecture that the spectrum 
of non-local integrals of motion 
of Bazhanov, Lukyanov, and Zamolodchikov 
is described by Gaudin Bethe ansatz equations 
associated to affine ${\mathfrak{sl}}_2$.

\end{abstract}
\date{\today}
\maketitle 
  
\section{Introduction}

The quantum KdV (qKdV) model was introduced in \cite{SY}, \cite{EY}. 
It is described by a remarkable commutative subalgebra of local Hamiltonians 
(local integrals of motion) in the completion of the universal enveloping algebra 
of the Virasoro algebra. 
The first nontrivial local Hamiltonian is 
$L_0^2+2\sum_{k>0}L_{-k}L_k$, where the $L_k$ are the standard Virasoro generators. 
The local Hamiltonians act on 
highest weight Virasoro modules, and the 
problem is to diagonalize these linear operators.

The study of  qKdV and similar models received a new boost after the 
groundbreaking series of papers \cite{BLZ1}-
\cite{BLZ5}. 
Apart from many other results, 
it was suggested that the spectrum of the model is described by the condition of the 
triviality of the monodromy of certain differential operators. Then the absence of 
monodromy is readily translated to a Bethe ansatz type equation.

In an attempt to understand the situation,  it was conjectured in \cite{L} that 
the algebra of local Hamiltonians can be naturally deformed at the expense of 
adding an extra Heisenberg algebra. The resulting Hamiltonians are called 
Intermediate Long Wave (ILW) Hamiltonians. 
They are quantization of a one-parameter family of 
integrable systems interpolating 
between the KdV and the Benjamin-Ono hierarchies.
The ILW Hamiltonians depend on 
a deformation parameter $\tau$, see \eqref{Litv-I2} for the first nontrivial 
ILW Hamiltonian. In the conformal limit $\tau\to 0$ the system splits and the 
quantum qKdV Hamiltonians are recovered.
It was conjectured in \cite{L} that the spectrum of ILW Hamiltonians is 
described by solutions of another, different looking Bethe ansatz equation, 
see \eqref{BAE limit}.

\medskip

The present work is an outcome of
an attempt to understand the Bethe ansatz answers 
predicted for the spectrum of the qKdV and ILW models. Our starting point is the Bethe 
ansatz for the XXZ model associated to quantum toroidal $\gl_1$ algebra, denoted by $\mc E_1$, 
which was developed in \cite{FJMM1} and \cite{FJMM2}. 

The ILW model
corresponds to the case of a tensor product of two Fock representations of $\mc E_1$.  
Using the results of \cite{FT}, we write the Taylor coefficients of $\mc E_1$ transfer matrices 
as certain integrals of products of generating currents, see Proposition \ref{gl1 transfer}.  
We use the vertex operator realization of the $\mc E_1$ action on the
Fock space, see \cite{Sa}, \cite{STU}, Section \ref{sub:Fock}. It turns out that in the tensor product of two Fock spaces, the Taylor 
coefficients of $\mc E_1$ transfer matrices  
are equivalent to the
elliptic local integrals of motion 
of \cite{FKSW}, see Corollary \ref{gl1 compare}. 
  
In the picture, we have  parameters $q_1,q_2$ of $\mc E_1$, the twisting parameter of the 
transfer matrix $p$, the ratio of evaluation parameters of two Fock spaces $u_1/u_2$. 
The scaling limit $q_2\to 1$, $q_1\to 1$ with $-\log q_1/\log q_2=r$ is fixed, is the 
Yangian limit.  We observe that in this limit, the first nontrivial coefficient of 
$\mc E_1$ transfer matrix gives rise to
the ILW Hamiltonian \eqref{Litv-I2}. 
The parameters $r$ and $u_1/u_2$ 
are related respectively 
to the central charge and the highest weight of 
the Virasoro module, and $p$ is related to
the deformation parameter $\tau$, see Lemma \ref{Litvinov}.

By \cite{FJMM1}, \cite{FJMM2}, the 
 spectrum of $\mc E_1$ transfer matrices is described by Bethe ansatz equations, see 
Proposition \ref{BA gl1 prop}. We show that the Yangian limit of $\mc E_1$ Bethe ansatz 
equations \eqref{BAE gl1} coincides with the ILW Bethe ansatz equations of \cite{L}, see 
Corollary \ref{ILW spectrum}. Thus we prove the conjectures of \cite{L}.

\medskip

The qKdV model has another remarkable set of commuting Hamiltonians, 
called non-local integrals of motion in
\cite{BLZ1}. The non-local integrals 
of motion commute with local Hamiltonians and therefore have the same eigenvectors. 
In this paper we propose a Bethe ansatz answer for the spectrum of non-local integrals of motion.

We identify the 
tensor product of two $\mc E_1$ Fock spaces 
with a subspace of fixed
momentum in a single Fock space of quantum toroidal $\gl_2$ algebra, denoted 
$\mc E_2$. For $\mc E_2$, we have parameters $q_1^\vee=q_2p^{-1/2}$,  $q_2^\vee=q_2$, 
the twisting parameters of the transfer matrices %Bethe algebra 
$p^\vee=q_1^{-1}q_2^{-1}$ 
and $p_1^\vee=u_2/u_1$.

Similarly to the case of $\mc E_1$, we express the 
Taylor coefficients of $\mc E_2$ transfer matrices as certain integrals of products 
of generating currents, see Proposition \ref{gl2 transfer}, and observe that 
on Fock modules
these integrals coincide with the
elliptic non-local integrals of motion of \cite{FKSW}, see 
Corollary \ref{gl2 compare}.

Then, remarkably, the $\mc E_1$ transfer matrices commute with $\mc E_2$ transfer matrices. 
This can be shown by a direct computation involving the simplest non-trivial Taylor coefficients.
It also follows from the identifications with \cite{FKSW} where the commutation of all local and non-local integrals 
of motion was explicitly checked.

We call this phenomenon the $(\mc E_1, \mc E_2)$ duality of XXZ models. 

We observe a similar  $(\mc E_1, \mc E_n)$ duality of XXZ models, cf. \cite{FKSW2}, \cite{KS}, and conjecture 
that there exists a  $(\mc E_m, \mc E_n)$ duality for all $m, n$, see Section \ref{duality sec}. 
This can be viewed as a quantum toroidal version of the $(\gl_m,\gl_n)$ duality of Gaudin systems, see 
\cite{MTV1}. In the affine setting, the new feature is the exchange of the twisting parameter 
corresponding to the null 
root with the parameter in the relations of the dual algebra. 
We are planning to describe the $(\mc E_m, \mc E_n)$ duality in detail in a future publication.

We expect that Bethe ansatz for $\E_n$ algebra can be established along the lines of 
\cite{FJMM1}, \cite{FJMM2}. The conjectural answer for $\mc E_2$ Bethe ansatz is given in 
Conjecture \ref{gl2 BAE conj}. Then we study the ILW and conformal limits of the $\mc E_2$ 
Bethe equations. The ILW limit of the Bethe equations looks new, see Lemma \ref{gl2 ILW}. 
In the conformal limit, the $\mc E_2$ Bethe ansatz equations turn into the
well-known Gaudin 
Bethe ansatz equations associated to affine $\mathfrak {sl}_2$ acting on
a tensor product of a 
Verma module with the basic level one module, see Lemma \ref{affine Gaudin}. The multiplicity 
space of this tensor product which contains the tensor product of highest weight vectors is 
naturally identified with the Virasoro Verma module with central charge and highest weight 
matching those 
of qKdV. Thus, we expect that the 
$\widehat{\mathfrak {sl}}_2$ Gaudin
equations describe the spectrum of non-local integrals of motion 
of \cite{BLZ1}. We conjecture the precise formula for the first nontrivial non-local integral, 
see \eqref{Q2 spectrum}.

Thus at the present moment, for the spectrum of qKdV we have 3 sets of Bethe equations: 
double Yangian $\gl_1$ (with $\tau=0$) equations as in \cite{L}, 
$\widehat{\mathfrak {sl}}_2$ Gaudin
equations, and the original equations of \cite{BLZ1}. Only the first set is proved (one still 
needs to study the completeness). The first and the third are related to local integrals of 
motion and the second one to non-local integrals of motion. It would be interesting to find a 
direct bijection between the solutions of the three systems.

\medskip

The text is organized as follows. After setting up the notation in Section \ref{sec:notation}, 
we write explicit formulas for Taylor coefficients of transfer matrices in Section \ref{sec:IM}. 
In Section \ref{sec:compare} we compare these formulas with elliptic local and non-local 
integrals 
of motion. In Section \ref{sec:BA}, we recall Bethe ansatz for $\mc E_1$ and
prove some technical results about the convergence of the Taylor coefficients of $\mc E_1$ 
transfer matrices. We also describe the conjectural answer for $\mc E_2$  Bethe ansatz. 
In Section \ref{secILW} we take the ILW limit and prove the Litvinov conjectures. 
We discuss the quantum toroidal $(\gl_m,\gl_n)$ duality of XXZ models and its consequences in Section 
\ref{sec:duality}. In the Appendix we prove an identity which is used in the proof of 
convergence of the Taylor coefficients of transfer matrices.

\section{Notation}\label{sec:notation}
In this section we set up our notation. 

\subsection{Quantum toroidal $\gl_n$}\label{sec:En}
Throughout the text we fix $q,d\in\C^\times$ and set 
$q_1=q^{-1}d, q_2=q^2, q_3=q^{-1}d^{-1}$ so that $q_1q_2q_3=1$. 
We assume that 
$q_1^kq_2^lq_3^m=1$ ($k,l,m\in\Z$) implies $k=l=m$.
Let $n$ be a positive integer. 
We denote by $(a_{i,j})_{i,j\in\Z/n\Z}$ the Cartan matrix of type 
$A^{(1)}_{n-1}$ (for $n=1$ we set $a_{0,0}=0$), and
by $(m_{i,j})_{i,j\in\Z/n\Z}$ the skew symmetric matrix 
whose only non-zero entries are $m_{i\pm1,i}=\pm1$. 

We denote by $\E_n$
the quantum toroidal algebra  of type $\gl_n$. Algebra $\E_n$ is generated by  
$E_{i,k},F_{i,k},H_{i,r}$ 
and invertible elements $K_{i}, C,C^{\perp},D,D^\perp$, 
where $i\in\Z/n\Z$, $k\in \Z$, $r\in\Z\backslash\{0\}$. 
Introduce the generating series
$E_i(z)=\sum_{k\in\Z}E_{i,k}z^{-k}$,
$F_i(z)=\sum_{k\in\Z}F_{i,k}z^{-k}$,
$K^{\pm}_i(z)=K_i^{\pm1}\bar{K}^\pm_i(z)$, 
$\bar{K}^\pm_i(z)=\exp\bigl(\pm(q-q^{-1})\sum_{r>0}H_{i,\pm r}z^{\mp r}
\bigr)$. 
Then the defining relations read as follows.
\begin{gather*}
\text{$C,\ \Cp$ are central},\quad
\Cp=K_{0}K_{1}\cdots K_{n-1}\,,
\\
D E_i(z) D^{-1}= E_i(q z)\,,\ 
D F_i(z) D^{-1}= F_i(q z)\,,\ 
D K^\pm_i(z) D^{-1}= K^{\pm}_i(q z)\,,
\\ 
\Dp E_i(z) (\Dp)^{-1}=q^{-\delta_{i,0}} E_i(z)\,,\ 
\Dp F_i(z) (\Dp)^{-1}=q^{\delta_{i,0}} F_i(z)\,,\ 
\Dp K^\pm_i(z) (\Dp)^{-1}= K^{\pm}_i(z)\,,
%\\
\end{gather*}
\begin{gather*}
K^\pm_i(z)K^\pm_j (w) = K^\pm_j(w)K^\pm_i (z), 
\\
\frac{g_{i,j}(C^{-1}z,w)}{g_{i,j}(Cz,w)}
K^-_i(z)K^+_j (w) 
=
\frac{g_{j,i}(w,C^{-1}z)}{g_{j,i}(w,Cz)}
K^+_j(w)K^-_i (z),
\\
d_{i,j}g_{i,j}(z,w)K_i^\pm(C^{-(1\pm1)/2}z)E_j(w)+g_{j,i}(w,z)E_j(w)K_i^\pm(C^{-(1\pm1) /2}z)=0,
\\
d_{j,i}g_{j,i}(w,z)K_i^\pm(C^{-(1\mp1)/2}z)F_j(w)+g_{i,j}(z,w)F_j(w)K_i^\pm(C^{-(1\mp1) /2}z)=0\,,
\end{gather*}
\begin{gather*}
[E_i(z),F_j(w)]=\frac{\delta_{i,j}}{q-q^{-1}}
(\delta\bigl(C\frac{w}{z}\bigr)K_i^+(w)
-\delta\bigl(C\frac{z}{w}\bigr)K_i^-(z)),\\
d_{i,j}g_{i,j}(z,w)E_i(z)E_j(w)+g_{j,i}(w,z)E_j(w)E_i(z)=0, \\
d_{j,i}g_{j,i}(w,z)F_i(z)F_j(w)+g_{i,j}(z,w)F_j(w)F_i(z)=0.\\
\end{gather*}
Here we set
\begin{align*}
n\ge 3&:\quad
g_{i,j}(z,w)=\begin{cases}
	      z-q_1w & (i\equiv j-1),\\
              z-q_2w & (i\equiv j),\\
	      z-q_3w & (i\equiv j+1),\\
              z-w & (i\not\equiv j,j\pm1),\\
	     \end{cases}\\
n=2&:\quad 
 g_{i,j}(z,w)=\begin{cases}
	      z-q_2w & (i\equiv j),\\
              (z-q_1w)(z-q_3w)& (i\not\equiv j),
	     \end{cases}\\
n=1&:\quad 
 g_{0,0}(z,w)=(z-q_1w)(z-q_2w)(z-q_3w),
\end{align*}
and 
$d_{i,j}=d^{\mp 1}$ ($i\equiv j\mp1, n\ge 3$), $=-1$ ($i\not\equiv j, n=2$), 
$=1$ (otherwise). 

We have 
\begin{align*}
K_i E_j(z) K_i^{-1}=q^{a_{i,j}}E_j(z)\,,\
K_i F_j(z) K_i^{-1}=q^{-a_{i,j}}F_j(z)\,.
\end{align*}
For $r\neq 0$, 
\begin{align*}
&[H_{i,r},E_j(z)]= a_{i,j}(r)C^{-(r+|r|)/2}  
\,z^r E_j(z)\,,\\
&[H_{i,r},F_j(z)]=-a_{i,j}(r)C^{-(r-|r|)/2}   
\,z^r F_j(z)\,,\\
&[H_{i,r},H_{j,s}]=\delta_{r+s,0} a_{i,j}(r) \frac{C^r-C^{-r}}{q-q^{-1}}
 \,,
\end{align*}
where  $[x]=(q^x-q^{-x})/(q-q^{-1})$ and 
\begin{align*}
 a_{i,j}(r)
=\frac{1}{r}\times
\begin{cases}
[r a_{i,j}]d^{-r m_{i,j}}&(n\ge 3)\,,\\
[2r]\delta_{i,j}-[r](d^r+d^{-r})(1-\delta_{i,j})& (n=2)\,,\\
-\frac{1}{q-q^{-1}}(1-q_1^r)(1-q_2^r)(1-q_3^r)& (n=1)\,.\\
\end{cases}
\end{align*}
In addition there are Serre relations.
Since they will not be used in this paper, 
we omit them referring the reader e.g. to \cite{FJMM}.

Hereafter we shall drop the suffix $0$ in the case $n=1$ and write e.g. $E(z)$.

\subsection{Coproduct and $R$ matrix}\label{sec:copro}

We shall use an automorphism $\theta$ of $\E_n$, 
which interchanges the vertical and horizontal 
subalgebras \cite{Mi}, \cite{Mi1}.
We have 
$\theta(E_{i,0})=E_{i,0}$, $\theta(F_{i,0})=F_{i,0}$, 
$\theta(K_{i})=K_{i}$ for $1\le i\le n-1$ 
and $\theta(C)= C^{\perp}$, $\theta(C^\perp)=C^{-1}$,
 $\theta(D)= D^{\perp}$, $\theta(D^\perp)=D^{-1}$.

Quite generally, 
we write $x^\perp=\theta(x)$ for an element $x\in \E_n$. 
We shall work mainly with  $E^\perp_i(z),F^\perp_i(z),K^{\pm,\perp}_i(z)$ 
which we call vertical currents. We refer to
 $E_i(z)$, $F_i(z)$, $K^\pm_i(z)$ as horizontal currents. 
\medskip

\noindent{\it Remark.}\quad 
We change the convention of \cite{FJMM2}, where 
$x^\perp$ was used to mean $\theta^{-1}(x)$.
\medskip

From now on, we drop $D$ from $\E_n$ and pass to the quotient
by the relation $C=1$. We denote the quotient algebra
by the same letter $\E_n$. 

Algebra $\E_n$ is equipped with a topological 
Hopf algebra structure
$(\Delta,\varepsilon,S)$
given by
\begin{align*}
&\Delta E^\perp_i(z)=E^\perp_i(z)\otimes 1+K^{-,\perp}_i(z)\otimes
E^\perp_i(C^\perp_1z) \,,
\\
&\Delta F^\perp_i(z)=F^\perp_i(C^\perp_2z)\otimes K^{+,\perp}_i(z)
+1\otimes F^\perp_i(z) \,,
\\
&\Delta K^{+,\perp}_i(z)
=K^{+,\perp}_i(C^\perp_2z)\otimes K^{+,\perp}_i(z)\,,
\\
&\Delta K^{-,\perp}_i(z)
=K^{-,\perp}_i(z)\otimes K^{-,\perp}_i(C^\perp_1z)\,,
\\
&\varepsilon(E^\perp_i(z))=0,\quad \varepsilon(F^\perp_i(z))=0,\quad 
\varepsilon(K^{\pm,\perp}_i(z))=1\,,\\
&S(E^\perp_i(z)) = -K^{-,\perp}_i((C^\perp)^{-1}z)^{-1} 
E^\perp_i((C^\perp)^{-1}z) ,\\
&S(F^\perp_i(z)) = -
F^\perp_i((C^\perp)^{-1}z) K^{+,\perp}_i((C^\perp)^{-1}z)^{-1},
\\
&
S(K^{\pm,\perp}_i(z))=K^{\pm,\perp}_i((C^\perp)^{-1}z)^{-1}\,.
\end{align*}
Here $C^\perp_1=C^\perp\otimes 1$ and $C^\perp_2=1\otimes C^\perp$. 
For $x=\Cp,D^\perp$, we set
$\Delta\, x=x\otimes x$, $\varepsilon(x)=1$, $S(x)=x^{-1}$.

Let $\B^\perp$ (resp. $\overline{\B}^\perp$) be the Hopf subalgebra of $\E_n$
generated by $E^\perp_{i,k},\ H^\perp_{i,-r}$ and 
$K^{\pm1}_i,(C^\perp)^{\pm1},(D^\perp)^{\pm1}$ 
(resp.  $F^\perp_{i,k},\ H^\perp_{i,r}$ and 
$K^{\pm1}_i,(C^\perp)^{\pm1},(D^\perp)^{\pm1}$)
where $i\in\Z/n\Z$, $k\in\Z$, $r>0$.

There exists a non-degenerate Hopf pairing, see \cite{N2},
$\langle~,~\rangle:\B^\perp\times \overline{\B}^\perp\to\C$ such that 
\begin{align*}
&\langle E_j^\perp(z),F^\perp_i(w)\rangle=\frac{1}{q-q^{-1}}\delta_{i,j}\delta(z/w)\,, \\
&\langle H_{j,-r}^{\perp},H_{i,s}^{\perp}\rangle
=\frac{1}{q-q^{-1}}a_{i,j}(r)\delta_{r,s}\,,\\
&\langle K^{-,\perp}_j,K^{+,\perp}_i\rangle=q^{-a_{i,j}}\,,
\quad 
\langle C^\perp,D^{\perp}\rangle=\langle D^\perp,C^{\perp}\rangle=q^{-1}\,.
\end{align*}
The quotient of the Drinfeld double of $\B^\perp$ by the relation 
$x\otimes 1-1\otimes x=0$ for $x=K_i^\perp,C^\perp,D^\perp$
is isomorphic to the Hopf algebra $\E_n$.

Denote by $\cN^\perp$ (resp. $\bar\cN^\perp$) 
the subalgebra generated by $E^\perp_{i,k}$ (resp. $F^\perp_{i,k}$), $i\in\Z/n\Z$, $k\in\Z$.
Set further $\bigl(\tilde{a}_{i,j}(r)\bigr)_{0\le i,j\le n-1}
=\bigl(a_{i,j}(r)\bigr)_{0\le i,j\le n-1}^{-1}$.  
Then the universal $R$ matrix of $\E_n$ has the form $\cR=\cR_1\cR_{0} q^{t_\infty}$,   
where 
\[
\cR_1=1+(q-q^{-1})\sum_{i=0}^{n-1}\sum_{k\in\Z}
E_{i,k}^\perp\otimes F_{i,-k}^\perp
+\cdots    
\]
is the canonical element of $\cN^\perp\otimes \bar\cN^\perp$, 
and
\begin{align*}
&\cR_{0}=\exp\Bigl((q-q^{-1})\sum_{0\le i,j\le n-1\atop r>0}
\tilde{a}_{i,j}(r)
H^{\perp}_{i,-r}\otimes H^\perp_{j,r}\Bigr)\,. 
\end{align*}
To describe $t_\infty$, introduce elements $H_{i,0}, c^\perp,d^\perp$ by 
setting $K^\perp_i=q^{H_{i,0}}$ ($1\le i\le n-1$), $C^\perp=q^{c^\perp}$, $D^\perp=q^{d^\perp}$.
Then 
\begin{align*}
&t_\infty=-c^\perp\otimes d^\perp-d^\perp\otimes c^\perp
+\sum_{i=1}^{n-1} \bar{\Lambda}_i\otimes H_{i,0}\,,
\end{align*}
where $\bar{\Lambda_i}=\sum_{j=1}^{n-1}\tilde{a}_{i,j}H_{j,0}$, 
$\bigl(\tilde{a}_{i,j}\bigr)_{1\le i,j\le n-1}
=\bigl(a_{i,j}\bigr)_{1\le i,j\le n-1}^{-1}$.

\medskip

\subsection{Fock module }\label{sub:Fock}
The most basic  $\E_n$ modules are the Fock modules $\F_\nu(u)$ ($\nu\in\Z/n\Z$).
In terms of the horizontal currents, it is characterized 
as the irreducible cyclic module generated by a vector $\ket{\emptyset}$,  
such that $E_i(z)\ket{\emptyset}=0$ for $i\in\Z/n\Z$ and
\[
K^{\pm}_i (z)\ket{\emptyset}=
\begin{cases}
\displaystyle{q\frac{1-q_2^{-1}u/z}{1-u/z}}\ket{\emptyset}& (i=\nu), \\
\ket{\emptyset}&(i\neq \nu).\\
\end{cases}
\]
The module $\F_\nu(u)$ has a basis labeled by partitions, which is an eigenbasis 
for $K^{\pm}_i (z)$ with explicit eigenvalues.
Moreover, 
the action of $E_i(z)$ and $F_i(z)$  is explicitly given by 
formulas similar to the Pieri rule, see \cite{FJMM2}.

Alternatively  $\F_\nu(u)$ has a realization by vertex operators 
\cite{Sa}, \cite{STU} in terms of vertical currents.
We recall the formulas below.

First assume $n\ge3$. 
Let $\bar\alpha_i$, $\bar\Lambda_i$ ($1\le i\le n-1$) be the simple roots and the fundamental
weights of $\mathfrak{sl}_n$,  
$\bar P=\bigoplus_{i=1}^{n-1}\Z\bar\Lambda_i$, $\bar Q=\bigoplus_{i=1}^{n-1}\Z\bar\alpha_i$, 
and  $(~,~)$ the standard symmetric bilinear form. 
We set $\bar\alpha_0=-\sum_{i=1}^{n-1}\bar\alpha_i$, $\bar\Lambda_0=0$.  
Introduce the twisted group algebra $\C\{\bar P\}$ generated by the symbols 
$e^{\pm \bar\alpha_2},\dots,e^{\pm \bar\alpha_{n-1}},e^{\pm\bar\Lambda_{n-1}}$
with the defining relations
\begin{align*}
 &e^{\bar\alpha_i} e^{\bar\alpha_j}=(-1)^{(\bar \alpha_i,\bar \alpha_j)}
e^{\bar\alpha_j}e^{\bar\alpha_i}\,,
\quad
 e^{\bar\alpha_i} e^{\bar\Lambda_{n-1}}=
(-1)^{\delta_{i,n-1}}e^{\bar\Lambda_{n-1}} e^{\bar\alpha_i} \,.
\end{align*}
For $\beta=\sum_{i=2}^{n-1}m_i\bar\alpha_i+m_n\bar\Lambda_{n-1}$ we set
\begin{align*}
 e^{\beta}=(e^{\bar\alpha_2})^{m_2}\cdots
(e^{\bar\alpha_{n-1}})^{m_{n-1}}(e^{\bar\Lambda_{n-1}})^{m_n}\,.
\end{align*}
Denote by $\C\{\bar Q\}$ the subalgebra generated by $e^{\bar\alpha_i}$ ($1\le i\le n-1$). 

Set $\cH=\C[H^{\perp}_{i,-m}\mid m>0,0\le i\le n-1]$. For each $\nu$ the Fock module 
$\F_\nu(u)$
is realized on $\cH\otimes\C\{\bar Q\}e^{\bar\Lambda_{\nu}}$. 
For $v\otimes e^\beta\in\cH\otimes\C\{\bar Q\}e^{\bar\Lambda_{\nu}}$, 
$\beta=\sum_{k=1}^{n-1}m_k\bar\alpha_k+\bar{\Lambda}_\nu$ 
and $0\le i\le n-1$, define
\begin{align*}
&z^{H_{i,0}}\bigl(v\otimes e^\beta\bigr) =z^{(\bar \alpha_i,\beta)}
d^{\frac{1}{2}\sum_{k=1}^{n-1}(\bar \alpha_i,m_k\bar\alpha_k) m_{ik}}
\bigl(v\otimes e^\beta\bigr) \,,
\\
&\partial_{\bar\alpha_i}
\bigl(v\otimes e^\beta\bigr) =
(\bar \alpha_i,\beta)
\bigl(v\otimes e^\beta\bigr) \,,
\\
&d^\perp(v\otimes e^\beta)=\bigl(\deg v+\frac{(\bar\beta,\bar\beta)}{2}
-\frac{(\bar\Lambda_\nu,\bar\Lambda_\nu)}{2}\bigr)(v\otimes e^\beta),
\end{align*}
where if $v=H^{\perp}_{i_1,-k_1}\cdots H^\perp_{i_N,-k_N}\in\cH$
then $\deg v=\sum_{j=1}^Nk_j$.

The action of the generators are given as follows.
\begin{align*}
&E_i^\perp(z)\mapsto c^*_i
\exp\Bigl(\sum_{r>0}\frac{q^{-r}}{[r]}H^\perp_{i,-r}z^r\Bigr) 
\exp\Bigl(-\sum_{r>0}\frac{1}{[r]}H^\perp_{i,r}z^{-r}\Bigr)
\otimes e^{\bar{\alpha}_i}z^{H_{i,0}+1} \,,
\\
&F_i^\perp(z)\mapsto c_i
\exp\Bigl(-\sum_{r>0}\frac{1}{[r]}H^\perp_{i,-r}z^r\Bigr) 
\exp\Bigl(\sum_{r>0}\frac{q^{r}}{[r]}H^\perp_{i,r}z^{-r}\Bigr)
\otimes e^{-\bar{\alpha}_i}z^{-H_{i,0}+1} \,,
\\
&K_i^{\pm,\perp}(z)\mapsto\exp\Bigl(\pm(q-q^{-1})
\sum_{r>0}H^\perp_{i,\pm r}z^{\mp r}\Bigr)
\otimes q^{\pm\partial_{\bar{\alpha}_i}}\,,
\\
&C^\perp \mapsto q\,,\quad 
D^\perp\mapsto q^{d^\perp}\,.
\end{align*}
The parameters $c_i$ are related to the spectral parameter $u$ by
\begin{align*}
c_0c_1\cdots c_{n-1} =(-1)^{n(n+1)/2+1}
q^{-1}d^{n/2+(n-2)\delta_{\nu,0}}u\,,\quad
c_i^*c_i=1\,.
\end{align*}

The same formulas apply for $n=1,2$; the only changes are that 
$\C\{\bar P\}$ is replaced by the ordinary group algebra for $n=2$, 
and that it is absent for $n=1$. The spectral parameter reads
$c_0c_1=q_1u$, $c_i^*c_i=1$ for $n=2$, and $c^*_0=\bigl((1-q_1)(1-q_3)u\bigr)^{-1}$, $c_0=q^{-1}u$ 
for $n=1$. 

We shall deal with tensor products $W$ of various Fock spaces.
We say that a vector $w\in W$ has principal degree $N$ and 
weight $\beta\in \bar P$ if 
$N=nN_0-(\overline{\Lambda}_1+\cdots+\overline{\Lambda}_{n-1},\beta)$, 
$q^{d^\perp}w=q^{N_0}w$, 
and 
$K_iw=q^{(\bar\alpha_i,\beta)}w$, $i=0,\ldots,n-1$,

\section{Integrals of motion}\label{sec:IM}

\subsection{Transfer matrix}
Let $\bar p, \bar p_1,\dots, \bar p_{n-1}$ 
be formal variables. We call these variables twisting variables.
The transfer matrices associated with Fock representations 
are the following weighted traces of the universal $R$ matrix:
\begin{align*}
&
T_\nu(u)=\Tr_{\F_\nu(u),1}
\Bigl(
\bigl(\bar p^{\,d^\perp}\prod_{i=1}^{n-1} {\bar p}_i^{\,-\bar\Lambda_i}\bigr)_1
\ \cR_{12}\Bigr)q^{d^\perp}
\,,
\\
&T^*_\nu(u)=\Tr_{\F_\nu(u),1}\Bigl(\bigl(
\bar p^{\, d^\perp}\prod_{i=1}^{n-1} \bar p_i^{\, -\bar\Lambda_i} \bigr)_1
\ \cR_{21}^{-1}\Bigr)q^{-d^\perp}\,.
\end{align*}
Here the trace is taken in the first tensor component, 
and $\cR_{21}=\sigma(\cR)$, $\sigma(a\otimes b)=b\otimes a$. 
Up to an overall power, $T_\nu(u)$ (resp. $T_\nu^*(u)$)
are formal power series in $u^{-1}$ (resp. $u$),
whose coefficients are formal power series in $\bar p$. 
Each coefficient of $u^{\pm1}$ is a well defined operator on 
appropriate modules, 
including, in particular, various tensor products of Fock modules. 
Thanks to the Yang-Baxter equation, these series commute with each other:
\[
[T_\mu(u),T_\nu(v)]=[T^*_\mu(u),T_\nu(v)]=[T^*_\mu(u),T^*_\nu(v)]=0
\quad (\forall \mu,\nu,u,v).
\]

Below we present explicit formulas for the
Taylor coefficients in $u^{\pm1}$ of these transfer matrices. 
More specifically, let 
\begin{align}\label{pp}
p=\bar pq^{-c^\perp}\,,\quad   
p^*=\bar pq^{c^\perp}\,,
\end{align}

and introduce the dressed currents
\begin{align}
&\hF_i^\perp(z)=F_i^\perp(z)\hK_i^{+,\perp}(z)^{-1}\,,\quad 
\hK_i^{+,\perp}(z)=
\prod_{\ell=0}^\infty\bK_i^{+,\perp}(p^{-\ell}z)\,,
\label{Fhat}
\\
&\hE_i^\perp(z)=\hK_i^{-,\perp}(z)^{-1}E_i^\perp(z)\,,\quad 
\hK_i^{-,\perp}(z)=\prod_{\ell=0}^\infty \bK_i^{-,\perp}
((p^{*})^\ell z)\,.
\label{Ehat}
\end{align}

We will show that 
the Taylor coefficients are integrals of products of the dressed currents, 
with kernel functions written in terms of infinite products 
\begin{align*}
&(z_1,\dots,z_l;p)_\infty=\prod_{j=1}^l\prod_{k\ge0}(1- p^kz_j)\,,
\quad
\Theta_{ p}(z)=(z,  pz^{-1},p;p)_\infty\,.
\end{align*} 
Thus, the Taylor coefficients of
$T_\nu(u)$ (resp. $T_\nu^*(u)$) in $u^{-1}$ (resp. $u$) are convergent series in $p$.

For the sake of concreteness, we shall 
restrict the discussion to the case of $\E_1$ and $\E_2$.
Extension to general $\E_n$
is straightforward.

\subsection{Shuffle algebra}

The calculation of the transfer matrix is essentially done in 
\cite{FT} using the language of the shuffle algebra. 
We first recall the result of \cite{FT} in the simplest case of $\E_1$. 

The shuffle algebra is a graded algebra
$Sh=\oplus_{k=0}^\infty Sh_{k}$ defined as follows.  
Each graded component
$Sh_{k}$ consists of symmetric rational functions in $k$ variables of 
the form 
\begin{align*}
G(x_1,\dots,x_k)=\frac{g(x_1,\dots,x_k)}{\prod_{1\le i<j\le k}(x_i-x_j)^2}, 
\quad g(x_1,\dots,x_k)\in \C[x_1^{\pm1},\dots,x_k^{\pm1}]^{\GS_k}, 
\end{align*}
satisfying the so-called wheel condition
\begin{align*}
g(x_1,\dots,x_k)=0
\quad \text{if $(x_1,x_2,x_3)=(x,q_1x,q_1q_2x)$ or $(x,q_2x,q_1q_2x)$}. 
\end{align*}
For $G_1\in Sh_{k}$ and $G_2\in Sh_{l}$ the multiplication $\ast$ is defined by 
\begin{align*}
(G_1\ast G_2)(x_1,\dots,x_{k+l})=
\Sym\Bigl[
G_1(x_1,\dots,x_k)G_2(x_{k+1},\dots,x_{k+l})
\prod_{\genfrac{}{}{0pt}{}{1\le i\le k}{ 1\le j\le l}}
\prod_{s=1}^3\frac{x_{k+j}-q_s x_i}{x_{k+j}-x_i}\Bigr]\,,
\end{align*}
where
\begin{align*}
&\Sym\ G(x_1,\dots,x_N) =\frac{1}{N!}
\sum_{\pi\in\GS_N} G(x_{\pi(1)},\dots,x_{\pi{(N)}})\,.
\end{align*}
Denote by $\N^\perp$ the subalgebra of $\E_1$ generated by
$\{E^\perp_i\mid i\in\Z\}$. 
It is known, see \cite{N1}, that $Sh$ is generated by the subspace $Sh_{1}$ of degree one, 
and that there is an isomorphism of algebras $\tau: \ \N^\perp\simeq Sh$ 
such that 
\begin{align*}
 \tau(E^\perp_i)= x^i\in  Sh_{1}, \qquad i\in\Z\,.
\end{align*}
One can introduce further an extended shuffle algebra $Sh^{\ge}$ by adjoining 
formal elements $H^\perp_{-r}$ ($r>0$), $q^{c^\perp},q^{d^\perp}$ to $Sh$,    
in such a way that the map $\tau$ extends to an isomorphism 
$\tau:\ \B^\perp\simeq Sh^{\ge}$. See \cite{N1} for the details. 

Consider a linear functional 
$\phi:\overline{\B}^\perp\to \C[[\bar p]]$ given by 
$\phi(Y)=\Tr_{\F(u)}\bigl(\bar p^{\,d^\perp}Y\bigr)$. 
Applying $\phi$ to the second component of the $R$ matrix, we obtain 
\begin{align}
X=(\id\otimes\phi)\cR= \Tr_{\F(u),2}\bigl((
\bar p^{\,d^\perp}
)_2\cR_{12}\bigr)
\quad\in 
\B^\perp[[\bar p,u]]\,.
\label{X} 
\end{align}
Since $\cR$ is the canonical element, $X$ is uniquely 
characterized by the property
$\langle X,Y\rangle=\phi(Y)$ for all $Y\in \overline{\B}^\perp$. 
By computing both sides for a spanning set of elements 
in $\overline{\B}^\perp$,  
the following result is obtained in \cite{FT}.

\begin{prop}\label{prop:FT}
The element $\tau(X)\in Sh^{\ge}[[\bar p,u]]$ corresponding to \eqref{X}
is given by
\begin{align}
&\sum_{N=0}^\infty(q^{-1}u)^N\frac{(q-q^{-1})^N}{N!}
\frac{1}{(p;p)_\infty} 
\prod_{1\leq i<j\leq N}\frac{(x_i-q_2x_j)(x_i-q_2^{-1}x_j)}
{(x_i-x_j)^2}
\label{shuffle-X}\\
&\times
\prod_{1\le i,j\le N}
\frac{(px_j/x_i,pq_2x_j/x_i;p)_\infty}
{(pq_1^{-1}x_j/x_i,pq_3^{-1}x_j/x_i;p)_\infty}
\cdot
\prod_{i=1}^N\tilde{K}^{-,\perp}(x_i)\cdot q^{-d^\perp}\,.\nn
\end{align}
where $p$ is given by \eqref{pp}
and 
$\tilde{K}^{-,\perp}(z)=\prod_{\ell=1}^\infty \bK^{-,\perp}
(p^{\ell}z)$.
In the above expression, the terms should be ordered so that
all elements $H^\perp_{-r}$ are placed  to the right of all 
$x_i$'s.
\end{prop}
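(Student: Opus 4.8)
The plan is to use the characterization recorded just above: since $\cR$ is the canonical element of the Hopf pairing between $\B^\perp$ and $\overline{\B}^\perp$, the element $X=(\id\otimes\phi)\cR$ is the unique element of $\B^\perp[[\bar p,u]]$ satisfying $\langle X,Y\rangle=\phi(Y)=\Tr_{\F(u)}(\bar p^{\,d^\perp}Y)$ for every $Y\in\overline{\B}^\perp$. Writing $\Xi$ for the right-hand side of \eqref{shuffle-X}, it then suffices to check that $\langle\tau^{-1}(\Xi),Y\rangle=\Tr_{\F(u)}(\bar p^{\,d^\perp}Y)$ as $Y$ runs over a spanning set of $\overline{\B}^\perp$; uniqueness forces $\tau(X)=\Xi$. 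As a spanning set I would take the monomials $Y=F^\perp(w_1)\cdots F^\perp(w_N)\,H^\perp_{s_1}\cdots H^\perp_{s_k}$ with $s_1,\dots,s_k>0$, which separate the contributions of $X$ by the shuffle degree $N$ (the common number of $E^\perp$ and $F^\perp$ factors) and by the Heisenberg grading.

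For the right-hand side I would evaluate the trace directly in the free-field realization of $\F(u)$ from Section \ref{sub:Fock}; for $n=1$ the module is the bosonic Fock space $\cH$, each $F^\perp(w)$ is a vertex operator in the modes $H^\perp_{\pm r}$ normalized by $c_0=q^{-1}u$, and $\bar p^{\,d^\perp}$ is the grading operator. Wick's theorem reduces $\Tr_{\cH}(\bar p^{\,d^\perp}F^\perp(w_1)\cdots F^\perp(w_N)H^\perp_{s_1}\cdots)$ to three ingredients: the bosonic partition function $\Tr_{\cH}\bar p^{\,d^\perp}$, which under the substitution \eqref{pp} contributes the prefactor $1/(p;p)_\infty$; the contractions among the vertex operators, which combine the rational OPE factor $\prod_{i<j}\frac{(w_i-q_2w_j)(w_i-q_2^{-1}w_j)}{(w_i-w_j)^2}$ with the cyclic, $\bar p$-shifted contractions whose geometric resummation produces the infinite products $\prod_{i,j}\frac{(pw_j/w_i,pq_2w_j/w_i;p)_\infty}{(pq_1^{-1}w_j/w_i,pq_3^{-1}w_j/w_i;p)_\infty}$; and the response to the inserted modes $H^\perp_{s}$, which records the dressing $\tilde K^{-,\perp}$. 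The $N$ factors $c_0$ supply the scalar $(q^{-1}u)^N$.

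On the left-hand side I would expand $\langle\tau^{-1}(\Xi),Y\rangle$ by means of the iterated coproduct together with the elementary pairings $\langle E^\perp(z),F^\perp(w)\rangle=(q-q^{-1})^{-1}\delta(z/w)$ and $\langle H^\perp_{-r},H^\perp_{s}\rangle=(q-q^{-1})^{-1}a(r)\delta_{r,s}$. The $E^\perp$--$F^\perp$ contractions specialize the shuffle variables $x_i$ to the $w_k$, the $N!$ orderings matching the $\Sym$ in the shuffle product and cancelling the $1/N!$, while the explicit factor $(q-q^{-1})^N$ in $\Xi$ cancels the normalization $(q-q^{-1})^{-N}$ of the pairings. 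The Cartan and Heisenberg factors generated by the coproducts $\Delta E^\perp$ and $\Delta F^\perp$, acting together with the dressing currents $\tilde K^{-,\perp}(x_i)$, reassemble into exactly the infinite products and the $H^\perp_s$-response found on the trace side. Matching the two computations term by term in $N$ then yields \eqref{shuffle-X}.

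The hardest part is the bookkeeping in the trace: disentangling which contractions belong to the convergent kernel and which must be resummed into the operator dressing $\tilde K^{-,\perp}(z)=\prod_{\ell\ge1}\bar K^{-,\perp}(p^\ell z)$. The grading insertion $\bar p^{\,d^\perp}$ transports each annihilation mode once around the trace and thereby turns each contraction into a geometric series in $p$; the precise index range ($\ell\ge1$ against $\ell\ge0$) of the resulting products is exactly what distinguishes the finite OPE factor, the infinite-product kernel, and $\tilde K^{-,\perp}$, and it is where the condition $|p|<1$ must be invoked to make the formal trace convergent. Once this separation is carried out, the remaining comparison reduces to elementary manipulations of $q$-shifted factorials.
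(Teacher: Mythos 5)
Your strategy coincides with the one the paper itself indicates: the paper does not prove Proposition \ref{prop:FT} but quotes it from \cite{FT}, noting only that $X$ in \eqref{X} is uniquely characterized by $\langle X,Y\rangle=\phi(Y)$ and that the result is obtained ``by computing both sides for a spanning set of elements in $\overline{\B}^\perp$'' --- exactly the computation you outline. Your description of the two sides (free-field trace with Wick resummation on one side, Hopf-pairing expansion of the shuffle element on the other) is a reasonable fleshing-out of that calculation; the paper supplies no further details against which to check your bookkeeping.
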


\subsection{Transfer matrix for $\E_1$}

In order to write down the transfer matrix we need to rewrite 
Proposition \ref{prop:FT} as operators acting on modules. 
We say that an $\E_1$ module $V$ has property (R)
if an arbitrary matrix element of a product of currents
$E^\perp(z)E^\perp(w)$ converges to a rational function of the form
\[
\bra{v^*}E^\perp(z)E^\perp(w)\ket{v}=\frac{p_{v^*,v}(z,w)}
{(z-q_1w)(z-q_2w)(z-q_3w)}\,,    
\]
where  $\ket{v}\in V$, $\bra{v^*}\in V^*$, and 
 $p_{v^*,v}(z,w)$ is a Laurent polynomial. 
Tensor products of Fock modules have property (R). 
From the quadratic relation of currents, 
it follows that on such modules the currents satisfy the commutation relation
\begin{align}
E^\perp(z)E^\perp(w)
=E^\perp(w)E^\perp(z)\cdot \prod_{s=1}^3\frac{1-q_s^{-1}w/z}{1-q_sw/z}
\label{EE-rel}
\end{align}
in the sense of matrix elements.

Below we shall use the symbols for ordered products
\[
\prod_{1\le i\le N}^{\curvearrowright}A_i=A_1A_2\cdots A_N\,,
\quad
\prod_{1\le i\le N}^{\curvearrowleft}A_i=A_NA_{N-1}\cdots A_1\,.
\]

\begin{lem}\label{lem:StoE}
Let $G\in Sh_N$ be an element of the shuffle algebra. 
Then the corresponding element $\tau^{-1}(G)\in \B^\perp$ acts 
on modules with property (R) 
by the formula
\begin{align*}
\int\!\!\cdots\!\!\int
\prod_{1\le i\le N}^{\curvearrowright} E^\perp(x_i)\cdot 
G(x_1,\dots,x_N)\prod_{i<j}\prod_{s=1}^3\frac{x_j-x_i}{x_j-q_sx_i}
\prod_{i=1}^N\frac{dx_i}{2\pi\sqrt{ -1} x_i}\,
\end{align*}
in the sense of matrix elements.
Here the integral is taken over a symmetric cycle such that
$q_sx_j$ is inside and $q_s^{-1}x_j$ is outside of the contour for $x_i$
 for each $i\neq j$ and $s=1,2,3$. 
\end{lem}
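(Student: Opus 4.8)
The plan is to exploit two facts from \cite{N1}: that the shuffle algebra $Sh$ is generated by its degree-one component $Sh_1$, and that $\tau\colon\N^\perp\simeq Sh$ is an \emph{algebra} isomorphism with $\tau(E^\perp_i)=x^i$. I therefore introduce the linear map $\Psi$ sending $G\in Sh_N$ to the integral operator on the right-hand side of the asserted formula, and I aim to prove: (i) $\Psi(x^i)=E^\perp_i$ as operators, matching $\tau^{-1}(x^i)=E^\perp_i$; and (ii) $\Psi$ is multiplicative, $\Psi(G_1\ast G_2)=\Psi(G_1)\Psi(G_2)$, the right-hand product being composition of operators. Granting (i) and (ii), both $\Psi$ and $G\mapsto(\text{action of }\tau^{-1}(G))$ are algebra homomorphisms out of $(Sh,\ast)$ that agree on the generators $Sh_1$, hence agree on all of $Sh$, which is exactly the claim. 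Statement (i) is immediate: for $N=1$ the kernel and the product over $i<j$ are empty, and $\int E^\perp(x)\,x^i\,\frac{dx}{2\pi\sqrt{-1}\,x}$ picks out the Fourier coefficient $E^\perp_i$.

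The heart of the matter is the multiplicativity (ii). First I would record the key \emph{symmetry} property: the operator-valued meromorphic function
\begin{align*}
\prod_{1\le i\le N}^{\curvearrowright}E^\perp(x_i)\cdot
\prod_{i<j}\prod_{s=1}^3\frac{x_j-x_i}{x_j-q_sx_i}
\end{align*}
is symmetric under $\GS_N$ in the sense of matrix elements. It suffices to treat an adjacent transposition: by \eqref{EE-rel} reordering $E^\perp(x_i)E^\perp(x_{i+1})$ produces the factor $\prod_s\frac{x_i-q_s^{-1}x_{i+1}}{x_i-q_sx_{i+1}}$, and combining this with the swapped kernel factor of the pair $(i,i+1)$ the total ratio collapses to $\prod_s q_s^{-1}=(q_1q_2q_3)^{-1}=1$ by the standing assumption $q_1q_2q_3=1$. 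Second, I would note the \emph{cross-term cancellation}: in $\Psi(G_1\ast G_2)$ the factor $\prod_{i\le k<j}\prod_s\frac{x_j-q_sx_i}{x_j-x_i}$ built into the definition of $\ast$ exactly cancels the cross part $\prod_{i\le k<j}\prod_s\frac{x_j-x_i}{x_j-q_sx_i}$ of the full kernel in all $k+l$ variables.

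Combining these, since the integration cycle and measure are $\GS_{k+l}$-symmetric the symmetry property lets me discard the $\Sym$ in $G_1\ast G_2$ against the symmetric integrand, and after the cross-cancellation the integrand of $\Psi(G_1\ast G_2)$ becomes literally $\prod^{\curvearrowright}E^\perp(x_i)\cdot G_1G_2$ times the two within-group kernels. Thus $\Psi(G_1\ast G_2)$ and $\Psi(G_1)\Psi(G_2)$ have \emph{the same integrand}, and the only remaining task is to match cycles: in $\Psi(G_1)\Psi(G_2)$ the two groups run over the symmetric cycles of the separate factors, radially separated as dictated by convergence of the matrix elements of $E^\perp(x_1)\cdots E^\perp(x_{k+l})$, whereas $\Psi(G_1\ast G_2)$ uses a single symmetric cycle in all variables.

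I expect the \textbf{main obstacle} to lie precisely in this last step. One must deform one family of contours into the other and show that the residues at the cross poles $x_a=q_sx_b$ ($a\le k<b$) arising from property (R) do not contribute; this is where the wheel condition on shuffle elements should enter, since each such residue specializes the variables to a configuration on which the relevant shuffle data degenerates, and where one needs the convergence estimates for the Taylor coefficients that are established later in the paper. Once the cycles are identified, the equality $\Psi(G_1\ast G_2)=\Psi(G_1)\Psi(G_2)$ follows, completing (ii) and hence the proof.
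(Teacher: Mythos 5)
Your overall strategy (reduce to the generators $Sh_1$ via the fact that $Sh$ is generated in degree one, and use the $\GS_N$-symmetry of $\prod^{\curvearrowright}E^\perp(x_i)\cdot\prod_{i<j}\prod_s\frac{x_j-x_i}{x_j-q_sx_i}$ coming from \eqref{EE-rel} together with $q_1q_2q_3=1$) is the right one, and your steps (i) and the cross-term cancellation in (ii) are correct. However, as written the argument has a genuine gap, and you have located it yourself: the identification of the single symmetric cycle of $\Psi(G_1\ast G_2)$ with the product of the two separate cycles implicit in the composition $\Psi(G_1)\Psi(G_2)$ is the entire analytic content of multiplicativity, and you leave it unresolved ("once the cycles are identified, the equality follows"). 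Your guess that the wheel condition is what saves the day at the cross poles is a red herring: the wheel condition plays no role here. What actually matters is that, for the \emph{ordered} product, the matrix elements of $\prod^{\curvearrowright}E^\perp(x_i)$ have poles only at $x_a=q_sx_b$ with $a<b$, and the contour prescription of the lemma (with $q_sx_b$ inside the $x_a$-contour) already places every such pole on the correct side, so the symmetric cycle deforms to nested circles $|x_1|\gg\cdots\gg|x_{k+l}|$ without crossing anything. Until you say this, (ii) is not proved.

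The paper's proof avoids having to prove multiplicativity for general $G_1,G_2$ at all. Since both $G\mapsto\Psi(G)$ and $G\mapsto\tau^{-1}(G)$ are linear and the shuffle monomials $x^{m_1}\ast\cdots\ast x^{m_N}$ span $Sh_N$, it suffices to evaluate $\Psi$ on such a monomial. There the kernel $\prod_{i<j}\prod_s\frac{x_j-x_i}{x_j-q_sx_i}$ cancels the shuffle-product factors \emph{completely}, the $\GS_N$-symmetry of the remaining integrand kills the symmetrization against the symmetric cycle, and for the surviving $\pi=\id$ term the deformation to $|x_1|\gg\cdots\gg|x_N|$ (justified exactly by the pole-location remark above) reduces the integral to Fourier-coefficient extraction, giving $E^\perp_{m_1}\cdots E^\perp_{m_N}=\tau^{-1}(x^{m_1}\ast\cdots\ast x^{m_N})$. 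If you want to keep your homomorphism formulation, you must supply the contour deformation argument explicitly; alternatively, restricting your computation to shuffle monomials from the outset makes the cancellation total and renders the cross-pole issue trivial. A minor additional point: in your symmetry check the ratio produced by an adjacent transposition is $\prod_s(-q_s^{-1})=-1$, which is then absorbed by the sign of the swapped factor $x_j-x_i$; the net effect is $1$, but not for the reason "$\prod_sq_s^{-1}=1$" alone.
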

\begin{proof}
It suffices to show that if 
\[
G(x_1,\dots,x_N)=x^{m_1}*\cdots* x^{m_N}
=
{\rm Sym}\left(x_1^{m_1}\cdots x_N^{m_N}\prod_{i<j}
\prod_{s=1}^3\frac{x_j-q_sx_i}{x_j-x_i}\right)    
\]
then the integral reduces to $\tau^{-1}(G)=E^\perp_{m_1}\cdots E^\perp_{m_N}$. 
Due to \eqref{EE-rel} the integrand becomes
\begin{align*}
&\frac{1}{N!}\sum_{\pi\in\GS_N}\prod_{1\le i\le N}^{\curvearrowright} E^\perp(x_i)\cdot     
x_{\pi(1)}^{m_1}\cdots x_{\pi(N)}^{m_N}
\prod_{i<j\atop \pi(i)>\pi(j)}
\prod_{s=1}^3\frac{x_i-q_sx_j}{x_i-q_s^{-1}x_j}
\\
&=\frac{1}{N!}\sum_{\pi\in\GS_N}
\prod_{1\le i\le N}^{\curvearrowright} E^\perp(x_{\pi(i)})\cdot     
x_{\pi(1)}^{m_1}\cdots x_{\pi(N)}^{m_N}
\,.
\end{align*}
Since the contour is symmetric, all integrals reduce to the case
$\pi=\id$. In that case the contour can be changed to circles satisfying
$|x_1|\gg\cdots\gg|x_N|$, because matrix elements of 
$\ds{\prod_{1\le i\le N}^{\curvearrowright} E^\perp(x_{i})}$ 
have poles only at $x_i=q_s x_j$ ($i<j$).
The assertion follows from this.
\end{proof}

Now we apply Lemma \ref{lem:StoE} to Proposition \ref{prop:FT}, noting  
that 
\[
T^*(u)=
\Tr_{\F(u),1}\Bigl((
\bar p^{\,d^\perp}
)_1\cR_{21}^{-1}\Bigr)    
=S\Bigl(\Tr_{\F(u),2}\Bigl((
\bar p^{\,d^\perp} 
)_2\cR_{12}\Bigr)\Bigr)\,.    
\]
By using 
\[
E^\perp(z) K^{-,\perp}(w)=\prod_{s=1}^3\frac{w-q_s^{-1}z}{w-q_sz}\cdot
K^{-,\perp}(w)E^\perp(z)\,,
\]
and  moving the product of $\tilde{K}^{-,\perp}(x_i)$'s in \eqref{shuffle-X},  
we can rewrite the integrand in terms of the dressed currents \eqref{Ehat}.  
In the resulting integrand  the poles $x_i=q_2x_j$ are canceled, so that  
only the poles $x_i=q_1x_j,q_3x_j$ need to be taken into account. 
Proceeding similarly also for $T(u)$,
we arrive at the following.
Here and after we set
$\Theta_{ p}(z_1,\ldots,z_k)=\prod_{j=1}^k\Theta_{ p}(z_j)$.

\begin{prop}\label{gl1 transfer}
As operators on modules with property (R) we have
\begin{align}
T(u)=&\Tr_{\F(u),1}\Bigl(
(\bar p^{\,d^\perp})_1\cR_{12}\Bigr) q^{d^\perp}
=\sum_{N=0}^\infty (q^{-1}u)^{-N} c_N I_N\,,
\label{Tmat-F}
\end{align}
where
\begin{align}
&I_N=
\int\!\!\cdots \!\!\int
\prod_{1\le i\le N}^{\curvearrowright} \hF^\perp(x_i)\cdot 
\prod_{i<j}
\frac{\Theta_{p}(x_j/x_i,q_2x_j/x_i)}
{\Theta_{p}(q_1^{-1}x_j/x_i,q_3^{-1}x_j/x_i)}
\cdot\prod_{j=1}^N\frac{dx_j}{2\pi \sqrt{-1} x_j}
\,,
\label{IN*}\\
&c_N=\frac{1}{N!}
\frac{1}{(p;p)_\infty} 
\left(
\frac{(p,q_3q_1;p)_{\infty}}{(q_1,q_3;p)_{\infty}}
\right)^N
\,.
\nn
\end{align}
The contour of integration is taken as
$|x_1|=\cdots=|x_N|=1$ when  $|q_1|,|q_3|<1$,
and by analytic continuation in the general case.  

Similarly we have 
\begin{align*}
T^*(u)=&\Tr_{\F(u),1}\Bigl((\bar p^{\,d^\perp})_1
\cR_{21}^{-1}\Bigr) q^{-d^\perp}
=\sum_{N=0}^\infty (q^{-1}u)^{N}c^*_N \cdot  I^*_N\,,
\end{align*}
where
\begin{align*}
&I^*_N=
\int\!\!\cdots \!\!\int
\prod_{1\le i\le N}^{\curvearrowleft} \hE^\perp(x_i)\cdot 
\prod_{i<j}\frac{\Theta_{p^*}(x_j/x_i,q_2x_j/x_i)}
{\Theta_{p^*}(q_1^{-1} x_j/x_i,q_3^{-1}x_j/x_i)}
\cdot\prod_{j=1}^N\frac{dx_j}{2\pi \sqrt{-1} x_j}
\,,
\\
&c^*_N=\frac{1}{N!}\frac{(q^{-1}-q)^N}{(p^*;p^*)_\infty}
\left(\frac{(p^*,q_2p^*;p^*)_{\infty}}
{(q_1^{-1}p^*,q_3^{-1}p^*;p^*)_{\infty}}\right)^N
\,.\nn 
\end{align*}
The contour of integration is taken as
$|x_1|=\cdots=|x_N|=1$ when  $|q_1|,|q_3|>1$,
and by analytic continuation in the general case.  \qed 
\end{prop}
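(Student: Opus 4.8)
The plan is to feed the shuffle-algebra computation of Proposition~\ref{prop:FT} into the integral translation of Lemma~\ref{lem:StoE}, and then to reorganize the resulting kernel into theta functions while simultaneously dressing the currents. The two transfer matrices are handled in parallel, differing only by the side of $\cR$ that is traced.

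First I would connect $T(u)$ and $T^*(u)$ to the partial trace $X$ of \eqref{X}. Writing $\cR=\sum_k a_k\otimes b_k$ with $a_k\in\B^\perp$ (the $E$-side) and $b_k\in\overline{\B}^\perp$ (the $F$-side), the standard identity $\cR_{21}^{-1}=(\id\otimes S)\cR_{21}$ gives $T^*(u)=\sum_k\phi(b_k)\,S(a_k)\,q^{-d^\perp}=S(X)\,q^{-d^\perp}$, where $\phi$ is the trace functional of \eqref{X}. For $T(u)$ the trace falls on the $E$-leg instead, so $T(u)=(\psi\otimes\id)\cR\cdot q^{d^\perp}$ with $\psi(a)=\Tr_{\F(u)}(\bar p^{\,d^\perp}a)$; by the symmetry of the Hopf pairing under $E^\perp\leftrightarrow F^\perp$ and $H^\perp_{-r}\leftrightarrow H^\perp_{r}$, this is computed by the same argument as $X$ with the two sides interchanged. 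The powers $q^{\pm d^\perp}$ and the value $C^\perp\mapsto q$ on $\F(u)$, which fixes $p,p^*$ through \eqref{pp}, are bookkeeping coming from the $q^{t_\infty}$ factor of $\cR$; it is precisely this dependence on which leg is traced that produces the twist $p$ for $T(u)$ and $p^*$ for $T^*(u)$.

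Next I would apply Lemma~\ref{lem:StoE} to the homogeneous components of \eqref{shuffle-X}. The lemma rewrites the degree-$N$ part as an integral of the ordered product $\prod_{1\le i\le N}^{\curvearrowright}E^\perp(x_i)$ against the kernel of \eqref{shuffle-X} times $\prod_{i<j}\prod_{s=1}^3\frac{x_j-x_i}{x_j-q_sx_i}$; because $S$ is an anti-homomorphism it reverses this order, which accounts for the $\curvearrowleft$ in $I^*_N$. Combining the rational prefactor $\prod_{i<j}\frac{(x_i-q_2x_j)(x_i-q_2^{-1}x_j)}{(x_i-x_j)^2}$ with the factor supplied by the lemma (and using $q_1q_2q_3=1$) I would extract the rational skeleton $\prod_{i<j}\frac{(1-x_j/x_i)(1-q_2x_j/x_i)}{(1-q_1^{-1}x_j/x_i)(1-q_3^{-1}x_j/x_i)}$, while the diagonal $i=j$ infinite products and the scalar constants of \eqref{shuffle-X} assemble the normalizations $c_N$ and $c^*_N$.

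The final step, and the one I expect to be the main obstacle, is the dressing of the currents, which must simultaneously complete the kernel to theta functions and clear the spurious poles. Using $E^\perp(z)K^{-,\perp}(w)=\prod_{s=1}^3\frac{w-q_s^{-1}z}{w-q_sz}K^{-,\perp}(w)E^\perp(z)$, I would commute the product of $\tilde{K}^{-,\perp}(x_i)$ in \eqref{shuffle-X}, together with the $K^{-,\perp}$ factors produced by $S(E^\perp)$, to the right of all current factors, turning $E^\perp(x_i)$ into the dressed currents $\hE^\perp(x_i)$ of \eqref{Ehat} (the $F$-side computation for $T(u)$ uses the analogous $K^{+,\perp}$ relation and yields $\hF^\perp(x_i)$ of \eqref{Fhat}). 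Two things have to happen at once: the infinite family of rational factors $\prod_{\ell\ge1}\prod_s(\cdots)$ generated by these commutations must combine with the off-diagonal $p$-products already present in \eqref{shuffle-X} to upgrade the rational skeleton into the full theta ratio $\prod_{i<j}\frac{\Theta_{p}(x_j/x_i,q_2x_j/x_i)}{\Theta_{p}(q_1^{-1}x_j/x_i,q_3^{-1}x_j/x_i)}$ (with $p$ replaced by $p^*$ for $T^*$), and the same factors must cancel exactly the $x_i=q_2x_j$ poles, so that only $x_i=q_1x_j$ and $x_i=q_3x_j$ poles remain. This joint completion and cancellation is the delicate heart of the argument; once it is verified, the surviving pole structure makes the symmetric contour $|x_1|=\dots=|x_N|=1$ (for $|q_1|,|q_3|>1$, respectively $<1$, and by analytic continuation otherwise) well defined and underlies the stated convergence in $p$ of the Taylor coefficients.
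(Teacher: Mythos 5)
Your proposal follows essentially the same route as the paper: the paper's proof likewise identifies $T^*(u)$ with $S$ applied to the partial trace $X$ of \eqref{X}, feeds Proposition \ref{prop:FT} into Lemma \ref{lem:StoE}, and then uses the relation $E^\perp(z)K^{-,\perp}(w)=\prod_{s=1}^3\frac{w-q_s^{-1}z}{w-q_sz}K^{-,\perp}(w)E^\perp(z)$ to absorb the $\tilde K^{-,\perp}(x_i)$ factors into the dressed currents, noting the cancellation of the $x_i=q_2x_j$ poles and treating $T(u)$ symmetrically. Your identification of the rational skeleton and of the theta-function completion as the delicate step is accurate, and the paper's own argument is no more detailed on that point.
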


\subsection{Transfer matrix for $\E_2$}

For the quantum toroidal $\gl_n$ algebra $\mc E_n$ with $n\ge2$, 
the trace functional has been calculated in \cite{FT}
in terms of an appropriate version of the shuffle algebra. 
The corresponding formulas for the transfer matrices
can be extracted from there in a similar manner.  
Since the argument is the same as in the previous case,
we omit the details and state only the final result for $\E_2$. 

We use the theta functions 
\begin{align}\label{theta-nu}
 \vartheta_\nu(z,p)=\sum_{n\in\Z+\nu/2}p^{n^2}z^{2n}\,,
\quad \nu=0,1.
\end{align}

\begin{prop}\label{gl2 transfer}
As operators on modules with property (R), the following formulas hold.
\begin{align}
&T_\nu(u)=\Tr_{\F_\nu(u),1}   
\Bigl((
\bar p^{\,d^\perp} \bar p_1^{\,-\bar \Lambda_1 })_1
\cR_{12}\Bigr) q^{d^\perp}
=\sum_{N=0}^\infty (q_1u)^{-N}c_N G_{\nu,N}\,,
\label{Tm*}
\\
&G_{\nu,N}=
\int\!\!\cdots \!\!\int
\prod_{1\le i\le N}^{\curvearrowright} \hF_0^\perp(x_{0,i})
\prod_{1\le i\le N}^{\curvearrowright}\hF_1^\perp(x_{1,i})
\cdot 
\frac{\prod_{t=0,1}\prod_{i<j}
\Theta_{p}(x_{t,j}/x_{t,i},q_2x_{t,j}/x_{t,i})}
{\prod_{i,j}
\Theta_{p}(q_1^{-1}x_{1,j}/x_{0,i},q_3^{-1}x_{1,j}/x_{0,i})
}
\label{G*nuN}\\
&\times \prod_{i=1}^N(x_{0,i}x_{1,i})^{N-2i+1}
\left(\prod_{i=1}^N\frac{x_{1,i}}{x_{0,i}}\right)^N
p^{\,-\nu/4}
\vartheta_\nu
\Bigl(
\bar p_1^{1/2}q^{-H_{1,0}/2}  
\prod_{i=1}^N\frac{x_{1,i}}{x_{0,i}},
p 
\Bigr)
\prod_{1\le j\le N\atop t=0,1}\frac{d x_{t,j}}{2\pi \sqrt{-1} x_{t,j}}
\,,
\nn\\
&c_N=\frac{1}{N!^2}
(q-q^{-1})^{2N}
(p;p)_{\infty}^{2N-2}(p,q_2^{-1}p;p)^{2N}_{\infty}
\,.\nn
\end{align}
The contour of integration is taken as $|x_{t,j}|=1$ when  $|q_1|,|q_3|<1$,
and by analytic continuation in the general case.

Similarly we have
\begin{align*}
&T^*_\nu(u)=Tr_{\F_\nu(u),1}\Bigl((
\bar p^{\,d^\perp}\bar p_1^{-\bar \Lambda_1}
)_1
\cR_{21}^{-1}\Bigr) q^{-d^\perp}
=\sum_{N=0}^\infty 
(q_1u)^{N}  
c^*_N\cdot G^*_{\nu,N}\,,
\nn\\
&G^*_{\nu,N}=
\int\!\!\cdots \!\!\int
\prod_{1\le i\le N}^{\curvearrowleft}\hE_1^\perp(x_{1,i})
\prod_{1\le i\le N}^{\curvearrowleft} \hE_0^\perp(x_{0,i})
\cdot 
\frac{\prod_{t=0,1}\prod_{i<j}
\Theta_{p^*}(x_{t,j}/x_{t,i},q_2x_{t,j}/x_{t,i})}
{\prod_{i,j}
\Theta_{p^*}(q_1^{-1}x_{1,j}/x_{0,i},q_3^{-1}x_{1,j}/x_{0,i})
}
\\
&\times \prod_{i=1}^N(x_{0,i}x_{1,i})^{N-2i+1}
\left(\prod_{i=1}^N\frac{x_{1,i}}{x_{0,i}}\right)^N
(p^{*})^{-\nu/4}
\vartheta_\nu\Bigl(
\bar p^{1/2}_1q^{H_{1,0}/2}
\prod_{i=1}^N\frac{x_{1,i}}{x_{0,i}},p^*\Bigr)
\prod_{1\le j\le N\atop t=0,1}\frac{d x_{t,j}}{2\pi \sqrt{-1} x_{t,j}}
\,,
\nn\\
&c^*_N=\frac{1}{N!^2}
(q-q^{-1})^{2N}(p^*;p^*)_{\infty}^{2N-2}
(p^*,q_2^{-1}p^*;p^*)^{2N}_{\infty}
\,.\nn
\end{align*}

The contour of integration is taken as $|x_{t,j}|=1$ 
when  $|q_1|,|q_3|>1$,
and by analytic continuation in the general case.  
\qed
\end{prop}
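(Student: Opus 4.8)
The plan is to retrace, in the two-colour setting, the derivation that led from Proposition~\ref{prop:FT} through Lemma~\ref{lem:StoE} to Proposition~\ref{gl1 transfer}; the only genuinely new ingredient is the trace over the lattice part of the $\E_2$ Fock module, which produces the theta factor $\vartheta_\nu$.

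First I would record the $\E_2$ analogue of Proposition~\ref{prop:FT}. As in \eqref{X}, set $\phi_\nu(Y)=\Tr_{\F_\nu(u)}(\bar p^{\,d^\perp}\bar p_1^{\,-\bar\Lambda_1}Y)$ and $X_\nu=(\id\otimes\phi_\nu)\cR$, characterised by $\langle X_\nu,Y\rangle=\phi_\nu(Y)$ for all $Y$. Using the factorisation $\cR=\cR_1\cR_0q^{t_\infty}$ together with the $\gl_2$ (two-colour) shuffle algebra of \cite{FT}, one obtains $\tau(X_\nu)$ as a sum over $N$ of degree-$(N,N)$ shuffle elements; the two colours occur in equal numbers because only operators preserving the auxiliary $\bar\alpha_1$-weight contribute to the trace. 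Each term carries the kernel factors produced by $\cR_0$ and the bosonic trace, together with a lattice factor coming from $q^{t_\infty}$ and the twist $\bar p_1^{-\bar\Lambda_1}$; this is exactly the computation of \cite{FT}, into which I would merely insert the extra twisting weight.

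Next I would apply the colour version of Lemma~\ref{lem:StoE}: a degree-$(N,N)$ shuffle element is sent by $\tau^{-1}$ to the integral of the ordered product $\prod_{1\le i\le N}^{\curvearrowright}E_0^\perp(x_{0,i})\prod_{1\le i\le N}^{\curvearrowright}E_1^\perp(x_{1,i})$ against the shuffle kernel, over a symmetric contour separating the poles $q_sx_j$ from $q_s^{-1}x_j$. Its proof is verbatim that of Lemma~\ref{lem:StoE}, using the colour analogues of the quadratic relation \eqref{EE-rel} to collapse the symmetrisation within each colour block to $\pi=\id$ and then deforming to nested circles. I would then pass to dressed currents: commuting the $\bK_i^{\pm,\perp}$ factors produced by $\cR_0$ through the $F$'s (resp.\ $E$'s) replaces them by $\hF_i^\perp$ (resp.\ $\hE_i^\perp$) and converts the rational shuffle kernel into the theta-function kernel of \eqref{G*nuN}, cancelling the $q_2$-poles exactly as in the passage from Proposition~\ref{prop:FT} to Proposition~\ref{gl1 transfer}.

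The heart of the matter, and the step I expect to be the main obstacle, is the explicit evaluation of the zero-mode (lattice) trace. Since the lattice operators $e^{\pm\bar\alpha_i}$ and $z^{H_{i,0}}$ do not commute, the ordered product of currents must be normal-ordered in the zero-mode sector, and it is this reordering that generates the monomial prefactor $\prod_{i=1}^N(x_{0,i}x_{1,i})^{N-2i+1}\bigl(\prod_{i=1}^N x_{1,i}/x_{0,i}\bigr)^N$. After normal ordering, the surviving $z^{H_{1,0}}$-type operator acts on $e^{m\bar\alpha_1+\bar\Lambda_\nu}$ by a power of $\prod_i x_{1,i}/x_{0,i}$ linear in $m$; summing over $m\in\Z$ against the lattice eigenvalue of $\bar p^{\,d^\perp}$ (which supplies the Gaussian $p^{(m+\nu/2)^2}$), the twist $\bar p_1^{-\bar\Lambda_1}$, and the physical-space factor coming from the $t_\infty$ coupling $\bar\Lambda_1\otimes H_{1,0}$, assembles precisely $p^{-\nu/4}\vartheta_\nu(\bar p_1^{1/2}q^{-H_{1,0}/2}\prod_i x_{1,i}/x_{0,i},p)$. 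The delicate bookkeeping is in matching the half-integer shift $\nu/2$, the prefactor $p^{-\nu/4}$, the exact exponents, and the normalisation $c_N$ (whose $(q-q^{-1})^{2N}$ comes from the $\cR_1$ expansion and whose $(p;p)_\infty$-powers come from the two free-boson partition functions); convergence in $p$ and the contour prescription then follow as in the $\E_1$ case. Finally $T^*_\nu(u)$ is obtained via $T^*_\nu(u)=S\bigl(\Tr_{\F_\nu(u),2}((\bar p^{\,d^\perp}\bar p_1^{-\bar\Lambda_1})_2\cR_{12})\bigr)$, the antipode interchanging $p\leftrightarrow p^*$, replacing $\hF_i^\perp$ by $\hE_i^\perp$, and conjugating the theta argument to $\bar p_1^{1/2}q^{H_{1,0}/2}\prod_i x_{1,i}/x_{0,i}$.
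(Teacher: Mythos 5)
Your proposal is correct and follows essentially the route the paper intends: the paper itself omits the proof, stating only that the trace functional for $\E_n$ is computed in \cite{FT} and that "the argument is the same as in the previous case," i.e.\ the passage from Proposition \ref{prop:FT} via Lemma \ref{lem:StoE} to Proposition \ref{gl1 transfer}. Your outline fills in exactly those steps, and correctly isolates the one genuinely new ingredient — the trace over the lattice sector $e^{m\bar\alpha_1+\bar\Lambda_\nu}$, whose Gaussian weight $p^{(m+\nu/2)^2-\nu/4}$ assembles the factor $p^{-\nu/4}\vartheta_\nu$ — so there is nothing to object to.
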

\medskip

\noindent{\it Example.} The first non-trivial coefficient reads:
\begin{align*}
p^{\,\nu/4}
G_{\nu,1}
=\int\!\!\int 
\hF^\perp_0(z) \hF^\perp_1(w)
\frac{w}{z}
\frac{
\vartheta_\nu\bigl(\bar p^{1/2}_1q^{-H_{1,0}/2}w/z,p\bigr)}
{\Theta_{p}\bigl(q_1^{-1}w/z,q_3^{-1}w/z\bigr)}
\frac{dz}{2\pi \sqrt{-1} z}\frac{dw}{2\pi \sqrt{-1} w}\,.
\end{align*}

\section{Comparison with \cite{FKSW}}\label{sec:compare}
In this section, 
we compare the results of the previous section
with the integrals of motion of \cite{FKSW},   
see Corollaries \ref{gl1 compare}, \ref{gl2 compare}. 

\subsection{Integrals of motion of \cite{FKSW}}\label{sec FKSW}

First we review the results of \cite{FKSW}. 

We fix parameters $r,s$ and $x$ with $0<x<1$. 
Let $\beta^1_m,\beta^2_m$ ($m\in\Z\backslash\{0\}$) be oscillators satisfying
\begin{align*}
&[\beta^i_m,\beta^i_{n}]
= \delta_{m+n,0}\
m\frac{[(r-1)m]}{[rm]}\frac{[(s-1)m]}{[sm]}\,,
\\
&[\beta^i_m,\beta^j_{n}]=
-\delta_{m+n,0}\
m\frac{[(r-1)m]}{[rm]}\frac{[m]}{[sm]}x^{sm\,\sgn(i-j)}\quad   (i\neq j).
\end{align*}
Let further $P,Q$ be zero mode operators such that $[P,\sqrt{-1}Q]=2$. 
Consider the bosonic Fock spaces $\F_{l,k}$ ($l,k\in\Z$)
generated by $\beta_{-m}^i$ ($i=1,2$, $m>0$) over the vacuum 
state $\ket{l,k}$ such that
\begin{align*}
&\ket{l,k}=
e^{\bigl(l\sqrt{\frac{r}{r-1}}-k\sqrt{\frac{r-1}{r}}\bigr)
\frac{\sqrt{-1}}{2}Q}\ket{0,0}\,,
\\
&P\ket{l,k}=\Bigl(l\sqrt{\frac{r}{r-1}}-k\sqrt{\frac{r-1}{r}}\Bigr)\ket{l,k}\,.
\end{align*}
We set $\hat{\pi}=\sqrt{r(r-1)}P$. 

The current of the deformed Virasoro algebra (DVA) acts on each $\F_{l,k}$
by  
\begin{align}
\ssT_1(z)=
x^{-\hat{\pi}}:\exp\Bigl(\sum_{m\neq 0}
\tilde{\beta}^1_mz^{-m}  
\Bigr):
+x^{\hat{\pi}}:\exp\Bigl(\sum_{m\neq 0}
\tilde{\beta}^2_mz^{-m} 
\Bigr):\,,
\label{DVir}
\end{align}
where $\tilde{\beta}^i_m=(x^{rm}-x^{-rm})\beta^i_m/m$. 

The screening currents $\ssF_i(z)$ act on $\F=\oplus_{l,k}\F_{l,k}$. 
They are given by

\begin{align*}
&\ssF_0(z)=e^{-\sqrt{-1}\sqrt{\frac{r-1}{r}}Q}z^{-\frac{\hat{\pi}}{r}+\frac{r-1}r}
:\exp\Bigl(\sum_{m\neq 0}\frac{1}{m}(-x^{sm}\beta^1_m+x^{-sm}\beta^2_m)\Bigr):\,,
\\
&\ssF_1(z)=e^{\sqrt{-1}\sqrt{\frac{r-1}{r}}Q}z^{\frac{\hat{\pi}}{r}+\frac{r-1}r}
:\exp\Bigl(\sum_{m\neq 0}\frac{1}{m}(\beta^1_m-\beta^2_m)\Bigr):\,.
\end{align*}

In \cite{FKSW}, two families of operators $\{\ssI_N\}_{N\ge1}$, 
 $\{\ssG_N\}_{N\ge1}$ are introduced. Both operators act on each $\F_{l,k}$. 
They are so designed that in the conformal limit
$x\to 1$ they tend to the local and non-local integrals of motion
of \cite{BLZ1}, respectively. 
For that reason,  $\{\ssI_N\}_{N\ge1}$ are called ``local'' integrals of motion  and   
 $\{\ssG_N\}_{N\ge1}$ ``non-local'' integrals of motion in \cite{FKSW}, 
even though both are non-local expressions as we shall see below. To distinguish from \cite{BLZ1} we call $\{\ssI_N\}_{N\ge1}$, 
 $\{\ssG_N\}_{N\ge1}$ elliptic local integrals of motion and elliptic non-local integrals of motion respectively.

Assume $0<r<1$, $0<s<2$. We use
\[
[u]_t=x^{\frac{u^2}{t}-u}\frac{\Theta_{x^{2t}}(x^{2u})}
{(x^{2t};x^{2t})_\infty}\,\quad (t=r,s). 
\]
The elliptic local integrals of motion are defined in terms of the DVA currents 
\begin{align}
\ssI_N&
=\int\!\!\cdots\!\!\int
\prod_{j=1}^N\frac{dz_j}{2\pi \sqrt{-1} z_j}
\prod_{1\le i\le N}^{\curvearrowright} \ssT_1(z_j)
\prod_{1\le i<j\le N}
\frac{[u_j-u_i]_s[u_j-u_i+r]_s}{[u_j-u_i+1]_s[u_j-u_i+r-1]_s}\,.
\label{ssI}
\end{align}
Here $z_j=x^{2u_j}$. 
The contours encircle the origin in such a way that
the poles 
$z_j=x^{-2+2sl}z_k$, $x^{-2(r-1)+2sl}z_k$ ($l=0,1,2,\dots$) are inside and
$z_j=x^{2-2sl}z_k$, $x^{2(r-1)-2sl}z_k$ ($l=0,1,2,\dots$) are outside, 
for all pairs $j<k$. 

The elliptic non-local integrals of motion are defined using the screening currents. 
They also depend on the choice of a function $\vartheta(u)$, which  
is an entire function satisfying
\begin{align*}
\vartheta(u+r)=\vartheta(u)\,,\quad 
\vartheta(u+r\tau)=e^{-2\pi \sqrt{-1} \tau+2\pi \sqrt{-1}\frac{2u-\hat{\pi}}{r}}\vartheta(u)\,,
\end{align*}
where $\tau$ is related to $x$ by $x^{2r}=e^{-2\pi \sqrt{-1}/\tau}$. 
The linear space of such functions is two dimensional, and is spanned by
\[
\vartheta_\nu(u)=
x^{\frac{2u^2}{r}-\frac{2\hat{\pi}u}{r}}
\vartheta_\nu(x^{2u-\hat{\pi}},x^{2r})
\]
where $\vartheta_\nu(z,p)$ is given in \eqref{theta-nu}.
Exhibiting the dependence on $\vartheta_\nu(u)$, we have
\begin{align}
\ssG_{\nu,N}
&=\int\!\!\cdots\!\!\int
\prod_{j=1}^N\frac{dz_j}{2\pi \sqrt{-1} z_j}\prod_{j=1}^N\frac{dw_j}{2\pi \sqrt{-1} w_j}
\prod_{1\le i\le N}^{\curvearrowright} \ssF_1(z_i)
\prod_{1\le i\le N}^{\curvearrowright} \ssF_0(w_i)
\label{ssG}\\
&\times
\frac{\prod_{1\le i<j\le N}[u_j-u_i]_r[u_j-u_i-1]_r[v_j-v_i]_r[v_j-v_i-1]_r}
{\prod_{1\le i,j\le N}[v_j-u_i-s/2]_r[v_j-u_i+s/2-1]_r}\ 
\vartheta_\nu\bigl(\sum_{j=1}^N(u_j-v_j)\bigr)\,,
\nn
\end{align}
where $z_j=x^{2u_j}$,  $w_j=x^{2v_j}$. 
The contours are the unit circle $|z|=1$.

The following commutativity is established in 
\cite{FKSW}, \cite{KS}
by a direct 
calculation. 
\begin{prop}
The operators $\ssI_m$, $\ssG_{\nu,m}$ mutually commute:
\begin{align*}
[\ssI_k,\ssI_m]=[\ssG_{\mu,k},\ssG_{\nu,m}]=
[\ssI_k,\ssG_{\nu,m}]=0\quad (\forall k,m>0,\ \mu,\nu).
\end{align*}
\end{prop}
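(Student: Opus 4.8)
The plan is to handle all three commutators by one and the same contour-integral mechanism. Each of $\ssI_N$ in \eqref{ssI} and $\ssG_{\nu,N}$ in \eqref{ssG} is a nested contour integral of an ordered product of currents against an elliptic kernel, with the cycles prescribed so that certain collision poles lie inside and others outside. Writing a product such as $\ssI_k\,\ssI_m$ as a single $(k+m)$-fold integral, the two orderings $\ssI_k\,\ssI_m$ and $\ssI_m\,\ssI_k$ differ only in which collision poles are captured by the contours. First I would use the quadratic exchange relation of the currents to recognize the combined integrand (product of currents times kernel) as a single symmetric meromorphic function of all $k+m$ variables. The commutator then reduces to the residues picked up at the loci where an integration variable of the first factor collides with one from the second.

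For the local family $[\ssI_k,\ssI_m]$ (and likewise $[\ssG_{\mu,k},\ssG_{\nu,m}]$) the key point is that the DVA current \eqref{DVir} obeys an exchange relation $\ssT_1(z)\ssT_1(w)=\Upsilon(z/w)\,\ssT_1(w)\ssT_1(z)$ with an explicit structure function $\Upsilon$, and that the elliptic kernel $\prod_{i<j}\frac{[u_j-u_i]_s[u_j-u_i+r]_s}{[u_j-u_i+1]_s[u_j-u_i+r-1]_s}$ is designed precisely so that $\Upsilon$ is absorbed and the integrand becomes fully symmetric. The residue at a collision pole is then a total $q$-difference of a theta-quotient in the remaining variables; summing over the collision loci and using the quasi-periodicity of $[\,\cdot\,]_s$ and $[\,\cdot\,]_r$, these residues telescope to zero over the closed cycles. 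This is the mechanism yielding $[\ssI_k,\ssI_m]=[\ssG_{\mu,k},\ssG_{\nu,m}]=0$.

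For the mixed commutator $[\ssI_k,\ssG_{\nu,m}]$ I would instead invoke the defining screening property: the commutator $[\ssT_1(z),\ssF_i(w)]$ is a total difference in $w$, of the form $A_i(xw)-A_i(x^{-1}w)$ for an explicit vertex operator $A_i$. Hence, once $\ssF_i(w)$ is integrated over a closed cycle, the $\ssT_1$ contribution telescopes away. The only subtlety is the zero-mode sector: one must check that the factor $\vartheta_\nu(\sum_j(u_j-v_j))$ in \eqref{ssG}, through its quasi-periodicity, is compatible with this telescoping so that no boundary term survives. Granting this, $[\ssI_k,\ssG_{\nu,m}]=0$.

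The genuinely delicate part is the residue bookkeeping: verifying that the collision residues assemble into \emph{exact} total differences of elliptic functions — including the zero-mode and $\vartheta_\nu$ contributions in the non-local case — and justifying all contour deformations (convergence and absence of pinching). I would also record an alternative that bypasses this direct computation within the present framework: by Corollaries \ref{gl1 compare} and \ref{gl2 compare} the $\ssI_N$ and $\ssG_{\nu,N}$ are identified with the Taylor coefficients of the $\E_1$ and $\E_2$ transfer matrices of Propositions \ref{gl1 transfer} and \ref{gl2 transfer}, whence the commutativity of those transfer matrices furnished by the Yang–Baxter equation delivers all three families of identities simultaneously.
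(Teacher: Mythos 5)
The paper does not actually prove this proposition: it is quoted from \cite{FKSW}, \cite{KS}, where it was established by a lengthy direct computation, and the present paper's own contribution is to reinterpret it (the mixed commutativity becomes the $(\E_1,\E_2)$ duality of Theorem \ref{1n duality}). Your first route is a sketch of that direct computation, and the mechanism you describe (exchange relations absorbing into the kernel, collision residues assembling into total differences, the screening property of $\ssF_i$ against $\ssT_1$) is indeed the right skeleton. But as written it is a plan, not a proof: the step you yourself flag as ``the genuinely delicate part'' --- verifying that the residues are exact total differences compatible with the $\vartheta_\nu$ zero-mode factor, and that no contour pinching occurs --- is precisely where all the work lies in \cite{FKSW}, \cite{KS}, and you leave it as ``granting this.''

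The more serious problem is your closing claim that the Yang--Baxter argument ``delivers all three families of identities simultaneously.'' It does not. Via Corollaries \ref{gl1 compare} and \ref{gl2 compare}, the $\ssI_N$ are Taylor coefficients of $\E_1$ transfer matrices and the $\ssG_{\nu,N}$ are Taylor coefficients of $\E_2$ transfer matrices; the Yang--Baxter equation gives $[T_\mu(u),T_\nu(v)]=0$ only for transfer matrices of one and the same algebra, so it yields $[\ssI_k,\ssI_m]=0$ and $[\ssG_{\mu,k},\ssG_{\nu,m}]=0$ but says nothing about the mixed commutator $[\ssI_k,\ssG_{\nu,m}]$, since there is no single universal $R$ matrix intertwining $\E_1$ and $\E_2$. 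The paper is explicit that this mixed commutativity is the nontrivial content of the $(\E_1,\E_2)$ duality, which it \emph{deduces from} the direct computation of \cite{FKSW} (or, as it remarks, could be rederived by checking only the first nontrivial integrals and using simplicity of the spectrum of $I_1$); it cannot be obtained for free from Yang--Baxter. So your proposed shortcut is circular for exactly the one commutator that matters most here.
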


\subsection{Elliptic local integrals of motion and {$\E_1$}}
Consider the tensor product of $\E_1$ Fock modules $\F(u_1)\otimes \F(u_2)$.
The action of the dressed
current  
$\hF^\perp(z)$ 
reads
\begin{align}
\Delta \hF^\perp(z)
&
=q^{-1}u_1:\exp\Bigl(
\sum_{m\neq 0}B^1_{m}z^{-m}\Bigr) :
+q^{-1}u_2:\exp\Bigl(
\sum_{m\neq 0}B^2_{m}z^{-m}\Bigr):\,,
\label{Delta-hE}
\end{align}
where 
$p=\bar pq^{-2}$ 
and
\begin{align*}
&B^1_{-m}=-\frac{q^{m}}{[m]}H^\perp_{-m}\otimes 1\,,\quad 
B^1_m=
\frac{1-p^{m}q^{2m}}{1-p^{m}}
\frac{q^{-2m}}{[m]}H^\perp_m\otimes 1
-\frac{(q-q^{-1})p^{m}}{1-p^{m}}1\otimes H^\perp_m
\,,\\
&B^2_{-m}=-\frac{1}{[m]}1\otimes H^\perp_{-m}\,,\quad 
B^2_m=
-\frac{(q-q^{-1})q^{-m}}{1-p^{m}} 
H^\perp_m\otimes1+\frac{1-p^{m}q^{2m}}{1-p^{m}}
\frac{q^{-m}}{[m]}1\otimes H^\perp_m\,,
\end{align*}
for $m>0$.
They satisfy the relations
\begin{align*}
&[B^i_m,B^i_{-m}]=
-\frac{1}{m}\frac{1-p^{m}q^{2m}}{1-p^{m}}(1-q_1^m)(1-q_3^m)\,,
\\
&[B^i_m,B^j_{-m}]=-\frac{1}{m}
\frac{1}{1-p^{m}}
\prod_{s=1}^3(1-q_s^m)
\times
\begin{cases}
p^{m}
& (i=1,j=2),\\
1 & (i=2,j=1).\\
\end{cases}
\end{align*}
On the other hand, the oscillators $\tilde{\beta}^i_m$ entering \eqref{DVir} 
satisfy 
\begin{align*}
&\Bigl[\tilde{\beta}^i_m,\tilde{\beta}^i_{-m}\Bigr] 
=-\frac{1}{m}\frac{1-x^{(2s-2)m}}{1-x^{2sm}}(1-x^{-2(r-1)m})(1-x^{2rm})\,,
\\
&\Bigl[\tilde{\beta}^i_m, \tilde{\beta}^j_{-m} 
\Bigr] 
=-\frac{1}{m}\frac{1-x^{-2m}}{1-x^{2sm}}(1-x^{-2(r-1)m})(1-x^{2rm})
\times
\begin{cases}
 1 & (i=1,j=2),\\
x^{2sm} & (i=2,j=1).\\
\end{cases}
\end{align*}

Fix $l,k$, and identify the vector space $\mc F_{l,k}$ with $\mc E_1$ module $\mc F(u_1)\otimes \mc F(u_2)$
by setting
$B^i_m=\tilde{\beta}^{3-i}_m$ and
\begin{align}\label{q1q2q3}
q_1=x^{2(1-r)}\,,\quad q_2=x^{-2}\,,\quad q_3=x^{2r}\,,\quad 
p=x^{2s}
\,,\quad 
\frac{u_2}{u_1}=x^{-2\hat{\pi}}\,.
\end{align}
Note that $\hat \pi$ acts in $\mc F_{l,k}$ as a scalar and, following \cite{FKSW}, we denote this scalar by the same letter $\hat\pi$.

\begin{cor}\label{gl1 compare}
With identification of parameters \eqref{q1q2q3}, 
the local integrals of motion $\{\ssI_N\}$ 
defined in \cite{FKSW}, see \eqref{ssI}, 
and coefficients $\{I_N\}$ of $\E_1$ transfer matrix, 
see \eqref{IN*}, 
generate the same commutative algebra. 
\end{cor}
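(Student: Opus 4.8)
The plan is to exhibit an explicit isomorphism between the integrand of the $\E_1$ transfer-matrix coefficient $I_N$ in \eqref{IN*} and that of the elliptic local integral of motion $\ssI_N$ in \eqref{ssI}, under the parameter dictionary \eqref{q1q2q3}, and thereby conclude that the two families span the same commutative algebra. The starting observation is \eqref{Delta-hE}: on $\F(u_1)\otimes\F(u_2)$ the dressed current $\Delta\hF^\perp(z)$ is a sum of two vertex operators $q^{-1}u_i\,{:}\exp(\sum_{m\neq0}B^i_m z^{-m}){:}$, which has exactly the same shape as the DVA current $\ssT_1(z)$ in \eqref{DVir}, namely $x^{\mp\hat\pi}{:}\exp(\sum_{m\neq0}\tilde\beta^i_m z^{-m}){:}$. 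The excerpt has already matched the commutation relations of the oscillators $B^i_m$ with those of $\tilde\beta^i_m$, so the prescribed identification $B^i_m=\tilde\beta^{3-i}_m$ together with \eqref{q1q2q3} makes $\Delta\hF^\perp(z)$ and $\ssT_1(z)$ coincide as operators, up to the scalar prefactors $q^{-1}u_i$ versus $x^{\mp\hat\pi}$, which differ only by an overall constant absorbable into the normalization $c_N$.

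First I would verify the scalar match: since $u_2/u_1=x^{-2\hat\pi}$, the two summands $q^{-1}u_1$ and $q^{-1}u_2$ are proportional to $x^{-\hat\pi}$ and $x^{+\hat\pi}$ after factoring out $q^{-1}(u_1u_2)^{1/2}$, so the ordered product $\prod^{\curvearrowright}\Delta\hF^\perp(x_i)$ agrees with $\prod^{\curvearrowright}\ssT_1(z_i)$ up to a power of the common prefactor that contributes only to $c_N$. Next I would show the two kernel functions agree. Writing $z_j=x^{2u_j}$ and using the substitutions $q_1=x^{2(1-r)}$, $q_2=x^{-2}$, $q_3=x^{2r}$, $p=x^{2s}$, I would convert each elliptic factor
\[
\frac{\Theta_p(x_j/x_i,\,q_2 x_j/x_i)}
{\Theta_p(q_1^{-1}x_j/x_i,\,q_3^{-1}x_j/x_i)}
\]
in \eqref{IN*} into the ratio of $[u_j-u_i]_s$-type factors appearing in \eqref{ssI}. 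Concretely, the definition $[u]_s=x^{u^2/s-u}\Theta_{x^{2s}}(x^{2u})/(x^{2s};x^{2s})_\infty$ turns each $\Theta_p$ into a $[\,\cdot\,]_s$ up to an elementary Gaussian prefactor $x^{\alpha u^2+\beta u}$; after the argument shifts $q_2=x^{-2}$, $q_1^{-1}=x^{2(r-1)}$, $q_3^{-1}=x^{-2r}$ these reproduce the arguments $u_j-u_i$, $u_j-u_i+r$, $u_j-u_i+1$, $u_j-u_i+r-1$ in \eqref{ssI}. The Gaussian prefactors, being symmetric under $i\leftrightarrow j$ and products over all pairs, collapse to an overall monomial that again can be folded into $c_N$ or cancels between numerator and denominator.

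I would then check that the integration contours coincide: the $\E_1$ contour in Proposition \ref{gl1 transfer} keeps $q_1 x_j,q_3 x_j$ inside and $q_1^{-1}x_j,q_3^{-1}x_j$ outside, and under \eqref{q1q2q3} this is precisely the condition in \eqref{ssI} that $z_j=x^{-2+2sl}z_k$ and $x^{-2(r-1)+2sl}z_k$ lie inside while $z_j=x^{2-2sl}z_k$ and $x^{2(r-1)-2sl}z_k$ lie outside, once the poles of the dressed kernel at $x_i=q_2^{\pm1}x_j$ are accounted for (recall from the derivation of Proposition \ref{gl1 transfer} that the $q_2$-poles are already cancelled by the dressing). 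Hence each $I_N$ equals a nonzero scalar multiple of $\ssI_N$. Since a rescaling of each graded piece by a nonzero constant does not change the algebra generated, the two families generate the same commutative algebra, which is the assertion. The main obstacle I anticipate is the bookkeeping of the Gaussian prefactors $x^{\alpha u^2+\beta u}$ produced when passing between $\Theta_p$ and $[\,\cdot\,]_s$: one must confirm that, summed over all pairs and combined with the $x^{\pm\hat\pi}$ scalars, they assemble into a symmetric overall monomial that contributes only to the normalization and does not spoil the contour prescription or introduce spurious $u_j$-dependence; this is a careful but routine computation rather than a conceptual difficulty.
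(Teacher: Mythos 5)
There is a genuine gap: your proof hinges on the claim that, under the dictionary \eqref{q1q2q3}, the kernel of \eqref{IN*} transforms into the kernel of \eqref{ssI} up to Gaussian prefactors, so that $I_N$ is a scalar multiple of $\ssI_N$. This is false, and the paper's own proof says so explicitly (``the kernel functions in the integrands of \eqref{IN*} and \eqref{ssI} are different''). Carrying out your substitution with $z_j=x^{2u_j}$, $q_2=x^{-2}$, $q_1^{-1}=x^{2(r-1)}$, $q_3^{-1}=x^{-2r}$, $p=x^{2s}$, the factor in \eqref{IN*} becomes, up to Gaussian prefactors,
\begin{align*}
\prod_{i<j}\frac{[u_j-u_i]_s\,[u_j-u_i-1]_s}{[u_j-u_i+r-1]_s\,[u_j-u_i-r]_s}\,,
\end{align*}
whereas \eqref{ssI} contains
\begin{align*}
\prod_{i<j}\frac{[u_j-u_i]_s\,[u_j-u_i+r]_s}{[u_j-u_i+1]_s\,[u_j-u_i+r-1]_s}\,.
\end{align*}
Writing $v=u_j-u_i$, the first has a zero at $v=1$ and a pole at $v=r$, while the second has a pole at $v=-1$ and a zero at $v=-r$; since $[\,\cdot\,]_s$ is odd and antiperiodic with period $s$, no Gaussian or monomial prefactor can reconcile these zero/pole configurations. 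So the arguments you list ($u_j-u_i+r$, $u_j-u_i+1$) are simply not the ones produced by the substitution, and for $N\ge 2$ the operators $I_N$ and $\ssI_N$ are not proportional. This is exactly why the corollary is phrased as ``generate the same commutative algebra'' rather than ``coincide up to normalization.''

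The part of your argument matching the currents \eqref{Delta-hE} and \eqref{DVir} (including the $x^{\mp\hat\pi}$ versus $q^{-1}u_i$ prefactors via $u_2/u_1=x^{-2\hat\pi}$) is correct and is the first half of the paper's proof. What is missing is the second, genuinely nontrivial half: one must invoke the structure of the Feigin--Odesskii algebra \cite{FO} (see \cite{FHHSY}, Section 4), in which both families of kernel functions, though distinct, are known to be generators of one and the same commutative subalgebra; the integrals built from either family therefore generate the same commutative algebra of operators, without being equal term by term. Without an argument of this type (or an explicit expression of each $\ssI_N$ as a polynomial in $I_1,\dots,I_N$ and conversely), the conclusion does not follow.
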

\begin{proof}
Under our identification up to a scalar multiple 
the formulas for the currents \eqref{DVir} and \eqref{Delta-hE} match.

The kernel functions in the integrands
of \eqref{IN*} and \eqref{ssI} are different, 
but both of them are generators of the same commutative subalgebra
of a Feigin-Odesskii algebra \cite{FO}. 
For more details see \cite{FHHSY}, Section 4.
\end{proof}
\medskip

Following \cite{FKSW} let us extract 
from $\Delta \hat{F}^\perp(z)$ 
the contribution of the diagonal Heisenberg algebra by setting
\begin{align*}
&\Delta \hF^\perp(z)=\ssT^{DV}(z)\Delta(Z(z))\,, 
\\
&Z(z)=\exp\bigl(-\sum_{m>0}\frac{H^\perp_{-m}}{[2m]}z^m\bigr)
\exp\bigl(\sum_{m>0}
\frac{q^{-2m}-p^{m}q^{2m}}{1-p^{m}}
\frac{H^\perp_{m}}{[2m]}z^m\bigr)\,.
\end{align*}
The reduced current is independent of 
$p$,   
\begin{align}
&\ssT^{DV}(z)=x^{-\hat{\pi}}
:\exp\bigl(\sum_{m\neq 0}\frac{(qz)^{-m}}{[2m]}\bar{H}^\perp_m\bigr):
+x^{\hat{\pi}}
:\exp\bigl(-\sum_{m\neq 0}\frac{(q^{-1}z)^{-m}}{[2m]}\bar{H}^\perp_m\bigr):\,,
\label{TDV} 
\end{align}
and is written in terms of a single set of oscillators
\[
\bar{H}^\perp_{-m}=q^m H^\perp_{-m}\otimes 1-1\otimes H^\perp_{-m}\,,
\quad 
\bar{H}^\perp_m=H^\perp_m\otimes 1-q^{-m}1\otimes H^\perp_m
\]
which commute with the $\Delta{H}^\perp_m$'s.
It is in the form  \eqref{TDV} that 
the DVA current was originally introduced in  \cite{SKAO}.

\subsection{Elliptic
non-local integrals of motion and $\E_2$}
Choose $\mu\in\{0,1\}$ and 
consider the Fock representation $\F_{\mu}(v)$ of $\E_2$. 

Let
\begin{align}
q_1=x^{2-s}\,,\quad q_2=x^{-2}\,,\quad q_3=x^{s}\,,
\label{q1q2q3vee}
\end{align}
and match 
\begin{align*}
&\frac{1}{[m]}H^\perp_{1,-m}
=\frac{1}{m}\bigl(-x^{-sm}\beta^1_{-m}+x^{sm}\beta^2_{-m}\bigr)\,,
\quad
\frac{1}{[m]}\frac{[(r-1)m]}{[rm]}
H^\perp_{1,m}
=\frac{1}{m}\bigl(-x^{sm}\beta^1_{m}+x^{-sm}\beta^2_{m}\bigr)\,,
\\
&\frac{1}{[m]}H^\perp_{0,-m}
=\frac{1}{m}\bigl(\beta^1_{-m}-\beta^2_{-m}\bigr)\,,
\quad
\frac{1}{[m]}\frac{[(r-1)m]}{[rm]}
H^\perp_{0,m}
=\frac{1}{m}\bigl(\beta^1_{m}-\beta^2_{m}\bigr)\,.
\end{align*}

We choose any $k,l\in\Z$.
Then $\ssG_{\nu,N}$ preserve $\mc F_{k,l}$ and $G_{\nu,N}$ preserve the space $\mc F_{\mu,k}(u):={\mc H}\otimes e^{(k+\mu/2)\bar\al_1}$. The above identification of oscillators identifies $\mc F_{k,l}$ with $\mc F_{\mu,k}(u)$ as vector spaces.

We also set:
\begin{align}\label{p1-pi}
\quad 
p=x^{2r} \,, \qquad \bar p_1 q^{-H_{1,0}}=x^{-2\hat{\pi}}.
\end{align}
Then we obtain the following statement.
\begin{cor}\label{gl2 compare} 
Under identification of parameters \eqref{q1q2q3vee} and \eqref{p1-pi}, 
the non-local integrals of motion of $\ssG_{\nu,N}$  defined in \cite{FKSW} and acting in $\mc F_{k,l}$, 
see \eqref{ssG}, and the coefficients $G_{\nu,N}$ of the $\E_2$ transfer matrix,  acting in $\mc F_{\mu,k}(u)$, see \eqref{G*nuN}, coincide.
\end{cor}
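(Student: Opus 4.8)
The plan is to establish \emph{Corollary \ref{gl2 compare}} by showing that after the substitutions \eqref{q1q2q3vee} and \eqref{p1-pi}, the integrand of $G_{\nu,N}$ in \eqref{G*nuN} is term-by-term equal to the integrand of $\ssG_{\nu,N}$ in \eqref{ssG}, and that the two prescribed integration contours agree (so that no residues are lost or gained under the identification). This splits naturally into three independent matchings: the currents, the kernel functions, and the theta-function factors. I would first treat the \textbf{currents}. Using the vertex-operator realization of $\F_\mu(v)$ from Section \ref{sub:Fock}, I would write out the dressed vertical currents $\hF_0^\perp(z)$, $\hF_1^\perp(z)$ explicitly in the oscillators $H^\perp_{0,\pm m}$, $H^\perp_{1,\pm m}$. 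Substituting the oscillator identification provided just before the statement, these should become the screening currents $\ssF_0(w)$, $\ssF_1(z)$ of \cite{FKSW} up to a scalar and a relabelling $z_i=x^{2v_i}$, $w_i=x^{2u_i}$; the zero-mode factors $z^{\pm\hat\pi/r}z^{(r-1)/r}$ and the exponentials $e^{\pm\sqrt{-1}\sqrt{(r-1)/r}\,Q}$ should emerge from the $e^{\mp\bar\alpha_i}z^{\mp H_{i,0}+1}$ pieces once $P,Q$ are matched to $H_{i,0}$, $\partial_{\bar\alpha_i}$ and $\hat\pi=\sqrt{r(r-1)}\,P$. This is essentially a direct computation that parallels the $\E_1$ case in Corollary \ref{gl1 compare}.

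Next I would handle the \textbf{kernel functions}. The rational/elliptic prefactor in \eqref{G*nuN} involves products of $\Theta_p(\,\cdot\,)$ over the diagonal blocks $(t,t)$ and over the off-diagonal block, whereas \eqref{ssG} is written with the $[u]_r$ symbols. Since $[u]_r=x^{u^2/r-u}\,\Theta_{x^{2r}}(x^{2u})/(x^{2r};x^{2r})_\infty$ and $p=x^{2r}$ by \eqref{p1-pi}, each factor $[u_j-u_i]_r$, $[u_j-u_i-1]_r$, $[v_j-u_i\mp(s/2)+\cdots]_r$ converts into a ratio of $\Theta_p$-functions of $x^{2(u_j-u_i)}=z_j/z_i$ (and similarly for $w$'s and for the mixed $w/z$ block). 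Setting $q_1=x^{2-s}$, $q_2=x^{-2}$, $q_3=x^{s}$ as in \eqref{q1q2q3vee}, the arguments $q_1^{-1}$, $q_3^{-1}$, $q_2$ appearing in $G_{\nu,N}$ align precisely with the shifts $\pm 1$, $\pm(s/2)$ in the $[\,\cdot\,]_r$ symbols of $\ssG_{\nu,N}$. I would verify that the leftover Gaussian prefactors $x^{u^2/r-u}$ and the monomial factors $\prod_i(x_{0,i}x_{1,i})^{N-2i+1}(\prod_i x_{1,i}/x_{0,i})^N$ in \eqref{G*nuN} reassemble, together with the $c_N$ normalization, to reproduce the overall power of $x$ in \eqref{ssG}.

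For the \textbf{theta factor}, the argument of $\vartheta_\nu$ in \eqref{G*nuN} is $\bar p_1^{1/2}q^{-H_{1,0}/2}\prod_i x_{1,i}/x_{0,i}$, which under \eqref{p1-pi} becomes $x^{-\hat\pi}\prod_i(x_{1,i}/x_{0,i})=x^{-\hat\pi+2\sum_i(u_i-v_i)}$ after writing $z_i=x^{2v_i}$, $w_i=x^{2u_i}$; comparing with the basis function $\vartheta_\nu(u)=x^{2u^2/r-2\hat\pi u/r}\,\vartheta_\nu(x^{2u-\hat\pi},x^{2r})$ of \cite{FKSW} evaluated at $u=\sum_i(u_i-v_i)$, I would match the elliptic modulus ($x^{2r}=p$) and absorb the remaining $x$-powers and $p^{-\nu/4}$ prefactor. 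The quasi-periodicity conditions on $\vartheta(u)$ should guarantee that only the two basis choices $\nu=0,1$ arise, so the two families are indexed compatibly.

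The \textbf{main obstacle} I anticipate is the contour comparison together with bookkeeping of the residual scalar and $x$-power factors. The contour in \eqref{G*nuN} is specified only through the inclusion/exclusion of poles at $q_s^{\pm1}x_{t,j}$, whereas \eqref{ssG} uses the unit circle $|z|=1$; I would need to check that under \eqref{q1q2q3vee} the pole-separation prescription of Proposition \ref{gl2 transfer} is exactly realized by the unit-circle contours of \cite{FKSW}, so that the deformation between them crosses no poles. The scalar matching of currents (the constants $c_i$, $c_i^*$ in Section \ref{sub:Fock}) against the $e^{\pm\sqrt{-1}\sqrt{(r-1)/r}\,Q}$ normalizations must be tracked carefully, since an unaccounted constant would break the claimed equality $G_{\nu,N}=\ssG_{\nu,N}$ (as opposed to the weaker ``same commutative algebra'' statement of Corollary \ref{gl1 compare}). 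Once these scalars and the $x$-power balance are pinned down and the contours are seen to coincide, the coincidence of the two operators follows.
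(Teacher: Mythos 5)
Your proposal is correct and follows essentially the same route as the paper: the paper's own proof is the one-line observation that $F^\perp_i(z)$ and $\ssF_{1-i}(z)$ have the same oscillator part under the stated identification, leaving the kernel, theta-factor, and contour matching implicit (note that under \eqref{q1q2q3vee} one has $|q_1|=x^{2-s}<1$ and $|q_3|=x^{s}<1$, so the prescribed contours in \eqref{G*nuN} are already unit circles, matching \eqref{ssG}). Your plan is simply the fully spelled-out version of that same term-by-term verification.
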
 
\begin{proof}
Note that currents $F^\perp_i(z)$ and $\ssF_{1-i}(z)$ 
have the same oscillator part. The corollary follows. 
\end{proof}

In particular, it follows that the non-local integrals $\ssG_{\nu,N}$'s commute because 
the transfer matrices commute. 
This fact was checked in \cite{FKSW} by a direct computation.

\section{Bethe Ansatz}\label{sec:BA}

\subsection{Bethe ansatz for $\E_1$}  

Bethe ansatz for the $\E_1$ 
transfer matrices was studied in \cite{FJMM1}, \cite{FJMM2}.  
We consider the transfer matrix $T(u)$ given in \eqref{Tmat-F} acting
on a tensor product of Fock spaces 
\[
W=\F(v_1)\otimes\cdots\otimes \F(v_M)\,.    
\]

We denote $\mc P$ the set of all partitions. 
Each partition $\la=(\la_1,\ldots,\la_\ell)$, 
$\la_1\ge\cdots\ge\la_\ell>0$, 
is represented also 
as a set of points on the plane  
$Y_\lambda=\{(i,j)\in\Z^2\mid 1\le i\le \ell\,, 1\le j\le \la_i\}$.
If $(i,j)\in Y_\lambda$ we say that the node $(i,j)$ belongs to $\la$
and write $(i,j)\in\la$.

The following result is obtained in \cite{FJMM2}.
\begin{prop}\label{BA gl1 prop}
For each eigenvector $w\in W$ of $T(u)$ 
of principal degree $N$, there exists a polynomial 
$Q(u)=\prod_{i=1}^N(1-t_i/u)$ in $u^{-1}$ such that the following hold. 
Set 
\begin{align*}
&\ab(u)=
p
\prod_{j=1}^M\frac{1-v_j/u}{1-q_2^{-1}v_j/u}
\prod_{s=1}^3\frac{Q(q_su)}{Q(q_s^{-1}u)}\,,\quad
p=\bar pq^{-M}\,.  
\end{align*}
Then the zeroes $\{t_i\}$ of $Q(u)$ satisfy the Bethe ansatz equation
\begin{align}\label{BAE gl1}
p\,  
\prod_{j=1}^M\frac{t_i-v_j}{t_i-q_2^{-1}v_j}
\prod_{j=1}^N
\frac{(q_1t_i-t_j)(q_2t_i-t_j)(q_3t_i-t_j)}
{(q_1^{-1}t_i-t_j)(q_2^{-1}t_i-t_j)(q_3^{-1}t_i-t_j)}
=-1,\qquad i=1,\ldots,N\,.
\end{align}
Denoting the corresponding eigenvalue of $T(u)$ by the same letter, we have
\begin{align}
&T(u)=
\varphi(u)\frac{Q(q_2^{-1}u)}{Q(u)}\sum_{\la\in \cP}
\prod_{\square\in\lambda}\mathfrak{a}(q^{-\square} u)\,,
\label{T-eigv}\\
&\varphi(u)=
\prod_{j=1}^M
\exp\Bigl(\sum_{r>0}
\frac{1}{r}\frac{1-q_2^{-r}}{(1-q_1^{r})(1-q_3^{r})}\bigl(\frac{v_j}{u}\bigr)^r
\Bigr)\,.\nn
\end{align}
Here $\lambda$ runs over all partitions, 
$\square=(a,b)$ runs over the nodes of 
$\lambda$ and $q^{\square}=q_3^{a-1}q_1^{b-1}$.
\end{prop}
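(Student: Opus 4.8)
The plan is to establish the eigenvalue formula \eqref{T-eigv} together with the Bethe ansatz equations \eqref{BAE gl1} by exploiting the algebraic Bethe ansatz structure already developed in \cite{FJMM1}, \cite{FJMM2} for the $\E_1$ transfer matrix. The key conceptual input is that $T(u)$ is a generating function built from the universal $R$-matrix traced over a Fock space, so its action on $W=\F(v_1)\otimes\cdots\otimes\F(v_M)$ is governed by the representation theory of $\E_1$ on tensor products of Fock modules. The central object is the Baxter operator $Q(u)=\prod_{i=1}^N(1-t_i/u)$, whose zeroes $\{t_i\}$ encode the Bethe roots; my first step would be to recall from \cite{FJMM2} that eigenvectors of $T(u)$ are constructed by applying creation operators associated to the currents $\hF^\perp$ to the highest weight (vacuum) vector, and that the resulting eigenvalue is organized as a sum over partitions, each node $\square=(a,b)$ contributing the factor $\mathfrak{a}(q^{-\square}u)$ with $q^{\square}=q_3^{a-1}q_1^{b-1}$.

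The heart of the argument is to derive the eigenvalue \eqref{T-eigv} from the fusion/coproduct structure. Concretely, I would compute the action of $T(u)$ on a Bethe vector by commuting the trace of the $R$-matrix past the creation operators. The ratio $Q(q_2^{-1}u)/Q(u)$ and the prefactor $\varphi(u)$ arise from the vacuum eigenvalue of $K^{\pm}$ on each Fock factor (recall from Section \ref{sub:Fock} that $K^\pm(z)\ket{\emptyset}$ acts by $q(1-q_2^{-1}u/z)/(1-u/z)$), while the summation over partitions $\lambda\in\cP$ reflects the combinatorial structure of the Fock basis on which $T(u)$ is diagonal. The function $\mathfrak{a}(u)$ is precisely the ratio that governs how adding a node to a partition rescales the eigenvalue, and its explicit form $p\prod_j\frac{1-v_j/u}{1-q_2^{-1}v_j/u}\prod_{s=1}^3 Q(q_su)/Q(q_s^{-1}u)$ encodes both the inhomogeneities $v_j$ and the self-interaction of the Bethe roots through the three factors $q_1,q_2,q_3$.

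The Bethe ansatz equations \eqref{BAE gl1} then emerge as the consistency (regularity) conditions guaranteeing that the formal sum over partitions in \eqref{T-eigv} actually defines an eigenvalue rather than merely a formal expression: the apparent poles at $u=t_i$ introduced by the factor $1/Q(u)$, together with the poles in the partition sum, must cancel. I would impose the vanishing of the residue of $T(u)$ at each $u=q_s t_i$, or equivalently demand that the eigenvalue be a genuine power series in $u^{-1}$ with no spurious singularities; setting the relevant residue to zero at $u=t_i$ produces exactly the relation $\mathfrak{a}(t_i)=-1$, which upon substituting the definition of $\mathfrak{a}$ and clearing the common factors $p$ and the shifted $Q$-products yields \eqref{BAE gl1}.

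The main obstacle I anticipate is the convergence and well-definedness of the infinite sum $\sum_{\lambda\in\cP}\prod_{\square\in\lambda}\mathfrak{a}(q^{-\square}u)$ over all partitions, and the rigorous justification that this formal eigenvalue matches the operator $T(u)$ as an honest power series in $\bar p$ and $u^{-1}$. Since $T(u)$ is defined through a trace weighted by $\bar p^{\,d^\perp}$, the partition sum is really a character-like generating function, and controlling its analytic behavior — especially showing that the poles organize so that the Bethe conditions are both necessary and sufficient — requires the technical convergence results on the Taylor coefficients promised in Section \ref{sec:BA}. I would handle this by working order-by-order in $\bar p$ (equivalently in $p$), where at each order only finitely many partitions contribute, reducing the identity to a finite combinatorial matching that can be checked against the known Fock-module action of Section \ref{sub:Fock}, and then invoking the shuffle-algebra computation of Proposition \ref{gl1 transfer} to pin down the overall normalization $\varphi(u)$.
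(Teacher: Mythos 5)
The paper does not prove this proposition at all: it is quoted verbatim from \cite{FJMM2} (``The following result is obtained in \cite{FJMM2}''), and the authors even summarize the actual mechanism later, in the paragraph opening the $\E_2$ Bethe ansatz subsection. That mechanism is not the one you describe. In \cite{FJMM2} one constructs an auxiliary commuting operator $Q(u)$ as a twisted trace of $\cR$ over a representation of the Borel subalgebra $\B^\perp$ (the fundamental module $M^+(u)$ with $K(z)v=(1-u/z)v$), proves a two-term Baxter relation $\mathcal{T}(u)=a(u)\prod_{s=1}^3Q(q_s^{-1}u)+p\,d(u)\prod_{s=1}^3Q(q_su)$ for a second auxiliary transfer matrix, and obtains \eqref{BAE gl1} from the requirement that the eigenvalue of $\mathcal{T}(u)$ be regular at the zeros of $Q$; the eigenvalue \eqref{T-eigv} is then produced by substituting the appropriate $Q$-ratios into the $q$-character of $\F(u)$, whose terms are labelled by partitions. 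Your route --- Bethe vectors built from creation operators, and \eqref{BAE gl1} read off as a residue condition on the partition sum \eqref{T-eigv} itself --- is genuinely different.

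The genuine gap in your proposal is the existence and polynomiality of $Q(u)$. This is the entire non-trivial content of the proposition, and it cannot be extracted from the regularity of \eqref{T-eigv}: you posit the Bethe roots $\{t_i\}$ (equivalently the polynomial $Q$) at the outset and then check consistency, which establishes at best that \emph{if} such a $Q$ exists the roots satisfy \eqref{BAE gl1}, not that every eigenvector of principal degree $N$ admits one with exactly $N$ roots. In \cite{FJMM2} this is supplied by showing that the $Q$-operator commutes with $T(u)$ and that its eigenvalues, suitably normalized, are polynomials in $u^{-1}$ of the correct degree --- a statement about a family of operators, not about a single eigenvalue function. Secondary, but also substantive: the residue analysis you sketch is more delicate than ``vanishing of the residue at $u=t_i$''. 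The terms of \eqref{T-eigv} attached to a partition $\la$ acquire poles at the shifted points $u=q^{\square}t_i$ for addable/removable nodes $\square$, and cancellation occurs between \emph{pairs} of partitions differing by one node; verifying that $\mathfrak{a}(t_i)=-1$ is equivalent to all these cancellations uses $q_1q_2q_3=1$ and the genericity hypothesis $q_1^kq_2^lq_3^m=1\Rightarrow k=l=m$ to ensure no accidental collisions of poles, none of which appears in your sketch. Your closing remarks on convergence correctly anticipate Proposition \ref{convergent prop}, but that proposition is downstream of the present one, not an ingredient of its proof.
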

By taking coefficients of $u^{-\ell}$ in \eqref{T-eigv}
the eigenvalues of the elliptic local integrals of motion can be expressed
in terms of symmetric polynomials of the Bethe roots $\{t_j\}$.  
Namely, set 
\begin{align*}
&\bw_r=\frac{\kappa_r}{r}\Bigl(\sum_{j=1}^N t^r_j
-\frac{q_3^rq_1^r}{(1-q_3^r)(1-q_1^r)} 
\sum_{k=1}^M v^r_k\Bigr)\,,
\quad \kappa_r=\prod_{s=1}^3(1-q_s^r).
\end{align*} 
\begin{cor}\label{eigvI}
Notation being as above, the elliptic local integral of motion \eqref{IN*} has the eigenvalue
\begin{align*}
&q^\ell c_\ell I_\ell=
\sum_{\alpha}C_\alpha\prod_{r\ge1}\frac{\bw_r^{\alpha_r}}{\alpha_r!}\,,
\quad
\end{align*} 
where the sum extends over all 
$\alpha=(1^{\alpha_1}2^{\alpha_2}\cdots)$ 
such that $\alpha_r\in\Z_{\ge0}$, $\sum_{r\ge1}r\alpha_r=\ell$.
The coefficients $C_\alpha$ are given by
\begin{align*}
&C_\alpha=\sum_{\lambda\in\cP}
p^{|\lambda|} 
\prod_{r\ge1}
\bz_r(\lambda)^{\alpha_r}\,,
\quad 
\bz_r(\lambda)
=\frac{1}{(1-q_3^r)(1-q_1^r)}-\sum_{(a,b)\in\lambda}q_3^{(a-1)r}q_1^{(b-1)r}\,.
\end{align*} 
\qed
\end{cor}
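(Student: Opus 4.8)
The plan is to start from the eigenvalue formula \eqref{T-eigv} for $T(u)$ and extract the coefficient of $u^{-\ell}$, matching it against the definition $T(u)=\sum_N(q^{-1}u)^{-N}c_NI_N$ in \eqref{Tmat-F}. The key observation is that the eigenvalue factorizes into a ``$\varphi$-and-$Q$'' prefactor times the partition sum $\sum_{\la\in\cP}\prod_{\square\in\la}\ab(q^{-\square}u)$. First I would take the logarithm of each piece, write everything as a single exponential in the power-sum data, and read off the natural symmetric-function variables. The quantities $\bw_r$ and $\bz_r(\la)$ should emerge precisely as the logarithmic power-sum coefficients of these factors: the $\bw_r$ from combining $\varphi(u)$, the ratio $Q(q_2^{-1}u)/Q(u)$, and the $p$-independent part of $\ab$, while the $\bz_r(\la)$ arise from the $\la$-dependent part of the product over nodes.

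Concretely, the next step is to expand $\log\ab(q^{-\square}u)$ in powers of $u^{-1}$. Writing $Q(u)=\prod_i(1-t_i/u)$ gives $\log Q(u)=-\sum_{r>0}\frac1r(\sum_i t_i^r)u^{-r}$, and similarly for the $v_j$ factors; the product over $s=1,2,3$ of shifted $Q$'s contributes the factor $\prod_s(1-q_s^r)=\kappa_r$. Summing the node contributions $\sum_{\square=(a,b)\in\la}$ of $q^{-\square r}=q_3^{-(a-1)r}q_1^{-(b-1)r}$ and combining with the overall $\prod_s$ factors, the $t_j$- and $v_k$-dependence organizes into exactly the combination defining $\bw_r$, and the node sum $\sum_{(a,b)\in\la}q_3^{(a-1)r}q_1^{(b-1)r}$ together with the geometric-series constant $\frac{1}{(1-q_3^r)(1-q_1^r)}$ produces $\bz_r(\la)$. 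At this point the eigenvalue becomes $\sum_{\la\in\cP}p^{|\la|}\exp\bigl(\sum_{r\ge1}\bw_r\bz_r(\la)\bigr)$ up to the prefactor bookkeeping, and the weight $p^{|\la|}$ is visible because each node of $\la$ carries one factor of $p$ from $\ab$.

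Finally, I would expand the exponential $\exp(\sum_r\bw_r\bz_r(\la))=\prod_r\sum_{\alpha_r\ge0}\frac{(\bw_r\bz_r(\la))^{\alpha_r}}{\alpha_r!}$, collect the terms of total degree $\ell=\sum_r r\alpha_r$ in $u^{-1}$ (noting each $\bw_r$ carries degree $r$), and interchange the sum over $\la$ with the sum over multi-indices $\alpha$. This yields $q^\ell c_\ell I_\ell=\sum_\alpha\bigl(\prod_r\frac{\bw_r^{\alpha_r}}{\alpha_r!}\bigr)\sum_{\la\in\cP}p^{|\la|}\prod_r\bz_r(\la)^{\alpha_r}$, which is precisely the claimed formula with $C_\alpha=\sum_{\la}p^{|\la|}\prod_r\bz_r(\la)^{\alpha_r}$. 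The main obstacle I anticipate is the careful bookkeeping in the second step: verifying that the $\varphi(u)$ factor, the ratio $Q(q_2^{-1}u)/Q(u)$, and the $p$-independent $q_s$-shifted $Q$-factors inside $\ab$ all recombine cleanly into the single symmetric combination $\bw_r$ with the correct normalization $\kappa_r/r$, and confirming that the convergence of the sum over $\la$ (controlled by $|p|<1$) is uniform enough to justify the termwise extraction of the $u^{-\ell}$ coefficient. The matching of the power $q^\ell$ and the constant $c_\ell$ on the left is a routine normalization check once the structure is in place.
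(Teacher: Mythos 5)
Your proposal is correct and is essentially the argument the paper has in mind: the corollary is stated with no written proof beyond the remark that it follows ``by taking coefficients of $u^{-\ell}$ in \eqref{T-eigv},'' and your logarithmic power-sum expansion (using $q_1q_2q_3=1$ so that $\sum_s(q_s^{-r}-q_s^r)=\kappa_r$ to recombine the $Q$- and $v_j$-contributions into $\bw_r\bz_r(\la)$) is precisely the computation that justifies it. The interchange-of-sums and convergence issues you flag are handled separately in the paper (Proposition \ref{convergent prop}) and are harmless at the level of formal power series in $p$.
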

It turns out that the leading coefficient factorizes
(see the identity in Proposition \ref{identity}):
\begin{align}\label{identity2}
C_{\ell}
=\sum_{\lambda\in\cP}
p^{|\lambda|}\bz_\ell(\la)
=\frac{(p\,q_3^\ell q_1^\ell;p)_\infty}
{(q_3^\ell,q_1^\ell;p)_{\infty}}\,.
\end{align}
As a result, we obtain the following technical statement.

Let $R=\max(1,|q_1|,|q_3|)$.
\begin{prop}\label{convergent prop}
For all $\ell$, 
the coefficients of $u^{-\ell}$ of series $T(u)$ and $Q(u)$ 
are convergent series of $p$ with radius of convergence at least $R^{-2\ell}$.

For generic $p, v_1,\dots,v_M$, 
the operator $I_1$ acting in $W$ is diagonalizable 
and has a simple spectrum.
\end{prop}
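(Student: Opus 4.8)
The plan is to prove the two assertions separately. For the convergence statement I would route everything through the eigenvalue formula of Corollary~\ref{eigvI}, which expresses the eigenvalue of the coefficient $q^\ell c_\ell I_\ell$ of $u^{-\ell}$ in $T(u)$ as a polynomial in the power sums $\bw_r$ of the Bethe roots with coefficients $C_\alpha$ that are explicit series in $p$; the analytic control of these series, and in particular the exact product formula \eqref{identity2} for the leading coefficient $C_\ell$ supplied by Proposition~\ref{identity}, is the engine of the argument. For the diagonalizability statement I would degenerate to $p=0$, where the transfer matrix decouples into a sum of commuting single--Fock operators, read off a simple spectrum for generic parameters, and then propagate simplicity to generic $(p,v_1,\dots,v_M)$ by analyticity of the discriminant.

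For the first part, fix a weight space $W_N\subset W$ of principal degree $N$. It is finite dimensional and preserved by every $I_\ell$, and since the dressing \eqref{Fhat} contributes to $\hF^\perp$ only the factors $1/(1-p^r)$ together with the theta kernel of \eqref{IN*}, each coefficient of $T(u)$ is a priori an analytic operator function of $p$ near the origin. I would then analyze the series $C_\alpha=\sum_{\la\in\cP}p^{|\la|}\prod_{r}\bz_r(\la)^{\alpha_r}$: using $ab\le|\la|$ for a node $(a,b)\in\la$ one has $a+b-2\le|\la|-1$, hence $|\bz_r(\la)|\le \mathrm{const}\cdot|\la|\,R^{r(|\la|-1)}$, so that, with $\sum_r r\alpha_r=\ell$, the series converges geometrically for $|p|<R^{-\ell}$. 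The leading term is governed instead by the identity \eqref{identity2}, which identifies $C_\ell$ with the ratio $(p\,q_1^\ell q_3^\ell;p)_\infty/(q_1^\ell,q_3^\ell;p)_\infty$; in particular $C_\ell$ has nonzero constant term $1/((1-q_1^\ell)(1-q_3^\ell))$, so its reciprocal is analytic up to the first zero of $(p\,q_1^\ell q_3^\ell;p)_\infty$, no closer than $|p|=|q_1q_3|^{-\ell}\ge R^{-2\ell}$. Inverting the triangular recursion read off from Corollary~\ref{eigvI},
\[
C_\ell\,\bw_\ell=q^\ell c_\ell\,\iota_\ell-\sum_{\alpha\neq(\ell)}C_\alpha\prod_{r\ge1}\frac{\bw_r^{\alpha_r}}{\alpha_r!},
\]
where $\iota_\ell$ denotes the eigenvalue of $q^\ell c_\ell I_\ell$ and $(\ell)$ the index with $\alpha_\ell=1$, I would solve inductively for $\bw_\ell$, hence for the power sums of the Bethe roots and, by Newton's identities, for the coefficients of $Q(u)$; the reciprocal $C_\ell^{-1}$ is exactly what pins the radius at $R^{-2\ell}$, and convergence of $T(u)$ then follows from \eqref{T-eigv}.

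For the second part I would study $I_1=\oint \hF^\perp(x)\,\frac{dx}{2\pi\sqrt{-1}\,x}$, the constant term of the dressed current, on a fixed $W_N$. At $p=0$ the cross terms in the coproduct \eqref{Delta-hE} vanish, so $\Delta \hF^\perp(z)$ degenerates to a sum $\sum_k q^{-1}v_k\,V^{(k)}(z)$ of vertex operators each supported on a single tensor factor; their constant terms commute and, on a single Fock module, coincide with the first Macdonald--type integral of motion, which is diagonal in the partition basis with the explicit eigenvalues $\epsilon_\la=\sum_{(a,b)\in\la}q_3^{a-1}q_1^{b-1}$ (up to normalization). These are pairwise distinct for generic $q$, so $I_1|_{p=0}$ is diagonal in the tensor product of partition bases with eigenvalues $q^{-1}\sum_k v_k\,\epsilon_{\la^{(k)}}$; for generic $v_1,\dots,v_M$ these separate the basis across all configurations $(\la^{(k)})$ of total degree $N$. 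Hence the discriminant of the characteristic polynomial of $I_1|_{W_N}$, an analytic function of $(p,v)$, is not identically zero, and off its proper analytic zero locus $I_1$ is diagonalizable with simple spectrum.

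The step I expect to be the main obstacle is making the convergence argument non-circular: the recursion produces $\bw_\ell$, hence $Q(u)$ and $T(u)$, only once the eigenvalues $\iota_\ell$ are known to be analytic with radius at least $R^{-2\ell}$, so one must first extract this radius directly -- either by continuing the a priori operator analyticity through the contour--pinching analysis of \eqref{IN*}, or by feeding the convergent $C_\alpha$ and $C_\ell^{-1}$ back through \eqref{T-eigv} -- and it is precisely here that the exact evaluation of Proposition~\ref{identity} is indispensable, since it both guarantees $C_\ell\neq0$ and supplies the sharp location $|q_1q_3|^{-\ell}$ of the singularity. By comparison the diagonalizability is routine once the single--Fock spectrum is identified, the only genuine input being the generic distinctness of the Macdonald eigenvalues $\epsilon_\la$.
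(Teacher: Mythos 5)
Your proposal follows essentially the same route as the paper: express the eigenvalues of $q^\ell c_\ell I_\ell$ via Corollary~\ref{eigvI}, bound $|\bz_r(\la)|$ by $R^{r(|\la|-1)}$-type estimates to get convergence of the $C_\alpha$ in $|p|<R^{-2\ell}$, invoke the product formula \eqref{identity2} to see that the leading coefficient $C_\ell$ does not vanish there, solve the triangular system inductively for the $\bw_\ell$ and hence for the coefficients of $Q(u)$ (with the a priori holomorphy of the $T(u)$ coefficients in $|p|<R^{-2}$ supplied by the explicit vertex-operator integrals), and finally obtain diagonalizability and simplicity of the spectrum of $I_1$ by degenerating to $p=0$ and using genericity. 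The paper's proof is exactly this argument in compressed form, so no further comparison is needed.
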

\begin{proof}
The coefficients of $u^{-\ell}$ of series $T(u)$ are given by explicit formulas 
involving vertex operators 
(see \eqref{IN*} for $M=2$ and in \cite{FKSW2} 
 for general $M$), and  are convergent in
$|p|<R^{-2}$.  
By Corollary \ref{eigvI}, the corresponding eigenvalues
\[
\sum_{\alpha}C_\alpha\prod_{r\ge1}\frac{\bw_r^{\alpha_r}}{\alpha_r!}
=C_\ell
\bw_\ell+\text{(terms with $\bw_j$, $j<\ell$)}
\]
are holomorphic in $|p|<R^{-2}$.  
It is easy to show 
the following estimate 
\begin{align*}
\prod_{r\ge1}|\bz_r(\la)|^{\alpha_r} \le K (|\la|R^{|\la|})^{2\ell}
\qquad (\sum_{r\ge1}r\alpha_r=\ell)
\end{align*}
with some constant $K>0$. 
Hence the coefficients $C_\alpha$ are convergent in 
$|p|<1/R^{2\ell}$.   
Moreover, due to \eqref{identity2} 
the leading coefficient $C_\ell$ 
does not vanish there.
Therefore, by induction on $\ell$, 
all $\bw_\ell$ are convergent power series of 
$p$.   
This implies that each coefficient of $u^{-\ell}$ in 
the eigenvalue $Q(u)$ is a convergent power series in 
$p$. 

For $p=0$ the operator $I_1$ becomes $H_1$ which is diagonalizable and has simple spectrum, if $v_i$ are generic.
Therefore, the same is true generically.
\end{proof}

The spectrum of the other transfer matrix $T^*(u)$ can be described by Bethe ansatz method in a similar way.

\subsection{Bethe ansatz for $\E_2$}
In \cite{FJMM2}, the Bethe ansatz \eqref{BAE gl1} was derived by 
constructing auxiliary operators $Q(u)$ and $\mathcal{T}(u)$, which 
mutually commute and satisfy a two-term Baxter's relation 
$\mathcal{T}(u)=a(u)\prod_{s=1}^3Q(q_s^{-1}u)+pd(u)\prod_{s=1}^3Q(q_su)$. 
Similarly to the transfer matrix $T(u)$, 
these operators are defined as traces of $\cR$, but now 
over representations of a Borel subalgebra $\mathcal{B}^\perp$ of $\E_1$. 
In particular, $Q(u)$ corresponds to the so-called 
fundamental module $M^+(u)$ characterized by highest weight $K(z)v=(1-u/z)v$. 
The corresponding eigenvalues of the transfer matrix $T(u)$ 
are then obtained by writing the $q$-character of $\F(u)$ and 
substituting each term by appropriate ratios of $Q(u)$. 

Naturally we expect that, for all $\E_n$ algebras,  
the same logic should persist for the description of 
transfer matrix eigenvalues.  
The resulting formulas are easy to guess. 
Let us formulate below the Bethe ansatz in the case of $\E_2$. 

We consider $T_\nu(u)$ in \eqref{Tm*}     
on a tensor product of Fock spaces
\[
W=\F_0(v_{0,1})\otimes\cdots\otimes \F_0(v_{0,M_0})\otimes    
\F_1(v_{1,1})\otimes\cdots\otimes \F_1(v_{1,M_1})\,.
\]

\begin{conj}\label{gl2 BAE conj}
For each eigenvector $w\in W$ of $T_\nu(u)$ 
of principal degree $N_0+N_1$ and weight $(N_0-N_1)\bar\alpha_1$, 
there exist polynomials
$Q_0(u)=\prod_{i=1}^{N_0}(1-s_i/u)$, 
$Q_1(u)=\prod_{i=1}^{N_1}(1-t_i/u)$ with the following properties. 
Set 
\begin{align*}
&\ab_0(u)=
p_0
\prod_{l=1}^{M_0}\frac{1-v_{0,l}/u}{1-q_2^{-1}v_{0,l}/u}\cdot
\frac{Q_0(q_2 u)}{Q_0(q_2^{-1} u)}
\frac{Q_1(q_3 u)}{Q_1(q_3^{-1} u)}\frac{Q_1(q_1 u)}{Q_1(q_1^{-1} u)}\,,
\\
&\ab_1(u)=
p_1
\prod_{l=1}^{M_1}\frac{1-v_{1,l}/u}{1-q_2^{-1}v_{1,l}/u}\cdot
\frac{Q_1(q_2 u)}{Q_1(q_2^{-1} u)}
\frac{Q_0(q_3 u)}{Q_0(q_3^{-1} u)}\frac{Q_0(q_1 u)}{Q_0(q_1^{-1} u)}\,,
\\
&
{p_0=\bar p \bar p_1^{\,-1}q^{-M_0}\,,\quad 
p_1=\bar p_1 q^{-M_1}\,.
}
\end{align*}
Then the roots $\{s_i\}$, $\{t_j\}$ satisfy the Bethe ansatz equations
\begin{align}
&p_0
\prod_{l=1}^{M_0}\frac{s_i-v_{0,l}}{s_i-q_2^{-1}v_{0,l}}\cdot
\prod_{j=1}^{N_0}\frac{q_2s_i-s_j}{q_2^{-1}s_i-s_j}
\prod_{k=1}^{N_1}\frac{(q_1s_i-t_k)(q_3s_i-t_k)}
{(q_1^{-1}s_i-t_k)(q_3^{-1}s_i-t_k)}
=-1\,,\quad (1\le i\le N_0 )\,,
\label{BAE gl2}\\
&p_1
\prod_{l=1}^{M_1}\frac{t_i-v_{1,l}}{t_i-q_2^{-1}v_{1,l}}\cdot
\prod_{j=1}^{N_1}\frac{q_2t_i-t_j}{q_2^{-1}t_i-t_j}
\prod_{k=1}^{N_0}\frac{(q_1t_i-s_k)(q_3t_i-s_k)}
{(q_1^{-1}t_i-s_k)(q_3^{-1}t_i-s_k)}
=-1\,,\quad (1\le i\le N_1)\,.
\label{BAE gl2-2}
\end{align}
The corresponding eigenvalue of $T_\nu(u)$ is given by 
\begin{align*}
T_\nu(u)=
\varphi_{\nu}(u)\frac{Q_{\nu}(q_2^{-1}u)}{Q_{\nu}(u)}
\sum_{\la\in \cP}
\prod_{\square\in\lambda}\mathfrak{a}_{c_\nu(\square)}(q^{-\square}u)\,.
\end{align*}
Here $c_\nu(\square)\equiv a-b+\nu$ ($\bmod 2$) stands for the color of 
a node $\square=(a,b)\in\la$, and we set 
\begin{align*}
&\varphi_\nu(u) 
=\prod_{l=1}^{M_\nu}\Phi_{\nu,\nu}(v_{\nu,l}/u)
\prod_{k=1}^{M_{1-\nu}}\Phi_{\nu,1-\nu}(v_{1-\nu,k}/u)\,,
\\
&\Phi_{\nu,\nu}(v)=
\exp\Bigl(\sum_{r>0}\frac{1}{r}
\frac{(1-q_2^{-r})(1+q_2^{-r})}
{(1-q_1^{2r})(1-q_3^{2r})}
v^r\Bigr)\,,
\\
&\Phi_{\nu,1-\nu}(v)=
\exp\Bigl(\sum_{r>0}\frac{1}{r}
\frac{(1-q_2^{-r})(q_1^r+q_3^{r})}
{(1-q_1^{2r})(1-q_3^{2r})}
v^r\Bigr)\,.
\end{align*}
\end{conj}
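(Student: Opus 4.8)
The statement is a conjecture, so I do not expect a complete rigorous proof, but here is the strategy I would pursue to establish it, following the template that worked for $\E_1$ in \cite{FJMM2}. The overarching idea is to construct auxiliary $Q$-operators $Q_0(u)$, $Q_1(u)$ as traces of the universal $R$ matrix over suitable $\B^\perp$-modules, and to derive the Bethe equations \eqref{BAE gl2}, \eqref{BAE gl2-2} as the pole-cancellation (``Bethe'') conditions in a system of Baxter-type TQ relations.

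\medskip
First I would identify, for each color $i\in\Z/2\Z$, the appropriate \emph{fundamental} (prefundamental) module $M^+_i(u)$ of the Borel subalgebra $\B^\perp$ of $\E_2$, generalizing the module characterized in the $\E_1$ case by $K(z)v=(1-u/z)v$. The operator $Q_i(u)=\Tr_{M^+_i(u),1}\bigl((\bar p^{\,d^\perp}\bar p_1^{\,-\bar\Lambda_1})_1\cR_{12}\bigr)$ would then be defined as a twisted trace over $M^+_i(u)$, and one must check that these commute among themselves and with all $T_\nu(v)$; this is automatic from the Yang--Baxter equation once the modules are in place. The polynomiality $Q_i(u)=\prod(1-t_i/u)$ on a given eigenvector is the analogue of the finiteness statement in Proposition \ref{BA gl1 prop}, and would follow from analyzing the $q$-character of the prefundamental module.

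\medskip
Next I would write the $q$-character (equivalently, the crystal/Pieri combinatorics) of the Fock module $\F_\nu(u)$ of $\E_2$, whose nodes $\square=(a,b)$ carry the color $c_\nu(\square)\equiv a-b+\nu\pmod 2$. Each node would be assigned the local weight $\mathfrak{a}_{c_\nu(\square)}(q^{-\square}u)$, and summing $\prod_{\square\in\lambda}\mathfrak{a}_{c_\nu(\square)}(q^{-\square}u)$ over all partitions $\lambda$, with the prefactor $\varphi_\nu(u)Q_\nu(q_2^{-1}u)/Q_\nu(u)$, produces the claimed eigenvalue formula. The scalar functions $\Phi_{\nu,\nu}$, $\Phi_{\nu,1-\nu}$ are pinned down by computing the action of $K^\pm_i(z)$ on the Fock highest weight vector (Section \ref{sub:Fock}) and resumming; this is a bookkeeping computation. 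The Bethe equations \eqref{BAE gl2}, \eqref{BAE gl2-2} then emerge exactly as the conditions that the eigenvalue of $T_\nu(u)$, written as this infinite sum, be free of spurious poles at $u=q_2 t_i$ (resp. $q_2 s_i$) — the residue cancellation between consecutive terms in the partition sum forces $\mathfrak{a}_i$ to take the value $-1$ at the Bethe roots, which is precisely \eqref{BAE gl2}, \eqref{BAE gl2-2} after clearing denominators.

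\medskip
The main obstacle, and the reason this is stated as a conjecture rather than a theorem, is the construction and control of the $Q$-operators for $\E_2$: one must build the prefundamental $\B^\perp$-modules explicitly, prove the two-color analogue of the two-term Baxter relation $\mathcal{T}(u)=a(u)\prod_s Q(q_s^{-1}u)+p\,d(u)\prod_s Q(q_s u)$, and establish that the resulting $Q_i$ are well defined (convergence of the twisted traces) and have the required polynomial eigenvalues on tensor products of Fock spaces. Unlike the $\E_1$ case, where \cite{FJMM2} carried this out in full, for $\E_2$ the representation theory of the relevant Borel modules and the completeness of the resulting Bethe system are not yet available; I would expect the convergence and polynomiality arguments, paralleling Proposition \ref{convergent prop}, to be the most delicate technical points.
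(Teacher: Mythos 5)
The statement is a conjecture which the paper does not prove; it only motivates it by the same strategy you describe, namely constructing $Q$-operators as twisted traces of $\cR$ over Borel subalgebra modules and obtaining the eigenvalue by substituting ratios of $Q$ into the $q$-character of $\F_\nu(u)$, exactly as was carried out for $\E_1$ in \cite{FJMM2}. Your proposal follows this same route and correctly identifies the missing ingredients (explicit prefundamental $\B^\perp$-modules for $\E_2$, the Baxter relation, convergence and polynomiality of the $Q$-eigenvalues) that keep the statement at the level of a conjecture.
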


\section{The Intermediate Long Wave Limit}\label{secILW}

In this section we study the limit $q_1,q_2,q_3\to 1$, 
keeping $r$ and 
$p$
fixed. More specifically, 
with the identification \eqref{q1q2q3} we set 
\[
x^{2r}=e^h\,,\quad x^{2s}=e^{2\tau}\,,\quad \beta=\frac{r-1}{r}\,,
\]
and let $h\to 0$.  Thus, $x\to 1$ and $s\to \infty$ while $r,\tau,\beta$ stay fixed.
We call this limit {\it Intermediate Long Wave (ILW)} limit. 

\subsection{The limit of Hamiltonians} 
In the ILW limit the reduced DVA current \eqref{TDV} becomes
\begin{align*}
&\ssT^{DV}(z)=2+\beta h^2\Bigl(\sum_{n\in\Z}L_nz^{-n}+\frac{(1-\beta)^2}{4\beta}\Bigr)+O(h^4)\,,
\end{align*}
where $L_n$ are the generators of the Virasoro algebra 
with central charge 
and highest weight 
\begin{align}\label{cDelta}
c=1-6\frac{(1-\beta)^2}{\beta}\,,\quad 
\Delta= 
\frac{(1-\beta)^2}{4\beta}(\hat{\pi}^2-1)\,.
\end{align}

The Heisenberg part is a vertex operator, 
$\Delta\,Z(z)=:\exp\bigl(\sum_{m\neq 0}\alpha_m z^{-m}\bigr):$, where
\begin{align*}
&[\alpha_m,\alpha_{n}]=\delta_{m+n}\,\beta\frac{m}{2}h^2
\Bigl(1-m\coth(m\tau)\frac{1}{r}h+O(h^2)\Bigr)\,.
\end{align*}
Rescaling further as 
$\alpha_{-m}=\sqrt{\beta}h\bigl(1+O(h^2)\bigr)a_{-m}$, 
$\alpha_{m}=\sqrt{\beta}h\bigl(1-m\coth(m\tau)h/r+O(h^2)\bigr)a_{m}$ 
($m>0$), 
we have 
$[a_m,a_{n}]=\delta_{m+n,0}m/2$ and $[a_m,L_n]=0$.

Combining these we obtain 
\begin{lem}\label{Litvinov}
In the ILW limit, one obtains from the first non-trivial elliptic local integral of motion 
\begin{align*}
&I_1=2+\beta h^2\mathbf{I}_1+\beta^{3/2}h^3\mathbf{I}_2+O(h^4)\,,\\
\end{align*}
with
\begin{align}
&\mathbf{I}_1=L_0+\frac{(1-\beta)^2}{4\beta}+2\sum_{m>0}a_{-m}a_m\,,
\label{Litv-I1} \\
&\mathbf{I}_2=\sum_{m\neq0}L_{-m}a_m
-\frac{2(1-\beta)}{\sqrt{\beta}}\sum_{m>0}m\coth(m\tau)a_{-m}a_m
+\frac{1}{3}\sum_{k+l+m=0}a_ka_la_m
\,.
\label{Litv-I2} 
\end{align}
\qed
\end{lem}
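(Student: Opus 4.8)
The plan is to extract the coefficients in the expansion $I_1=2+\beta h^2\mathbf{I}_1+\beta^{3/2}h^3\mathbf{I}_2+O(h^4)$ directly from the factorized expression $\Delta\hF^\perp(z)=\ssT^{DV}(z)\,\Delta(Z(z))$ established in the previous subsection, by substituting the already-computed small-$h$ expansions of the two factors and then performing the $z$-integration in $I_1$. Since $I_1$ is (up to the constant $c_1$) the integral $\int \hF^\perp(z)\,\frac{dz}{2\pi\sqrt{-1}z}$, i.e.\ extraction of the zeroth Fourier mode, the task reduces to collecting the $z^0$-component of the product $\ssT^{DV}(z)\,\Delta Z(z)$ order by order in $h$. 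First I would record the expansions we are given: $\ssT^{DV}(z)=2+\beta h^2\bigl(\sum_n L_nz^{-n}+\tfrac{(1-\beta)^2}{4\beta}\bigr)+O(h^4)$, and $\Delta Z(z)=:\exp(\sum_{m\neq0}\alpha_mz^{-m}):$ with the commutator $[\alpha_m,\alpha_n]=\delta_{m+n,0}\,\beta\tfrac{m}{2}h^2\bigl(1-m\coth(m\tau)\tfrac{1}{r}h+O(h^2)\bigr)$, together with the rescaling $\alpha_{-m}=\sqrt{\beta}h(1+O(h^2))a_{-m}$ and $\alpha_m=\sqrt{\beta}h(1-m\coth(m\tau)h/r+O(h^2))a_m$ for $m>0$, normalized so that $[a_m,a_n]=\delta_{m+n,0}\,m/2$.

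Next I would expand the normal-ordered exponential. Writing $\Delta Z(z)=1+\sum_{m\neq0}\alpha_mz^{-m}+\tfrac12\!:\!(\sum_{m\neq0}\alpha_mz^{-m})^2\!:+\tfrac16\!:\!(\cdots)^3\!:+\cdots$, each $\alpha$ carries a factor $\sqrt{\beta}h$, so the degree-$k$ term in the exponential contributes at order $h^k$ (and higher). Extracting the constant-in-$z$ part picks out, at each order, those products $\alpha_{m_1}\cdots\alpha_{m_k}$ with $m_1+\cdots+m_k=0$. The key bookkeeping step is then to organize the product $\ssT^{DV}(z)\Delta Z(z)$ by total $h$-degree: at order $h^0$ one gets the constant $2$; at order $h^2$ the contributions are $\beta h^2(\sum_n L_n z^{-n})$ from $\ssT^{DV}$ paired with the $z^n$-mode of $\Delta Z$ (giving $\sum_{m\neq0}L_{-m}\alpha_m/(\sqrt\beta h)$-type terms, but those are in fact order $h^3$) together with the constant $\tfrac{(1-\beta)^2}{4\beta}$ and the quadratic diagonal term $\tfrac12:(\sum\alpha_m z^{-m})^2:$ whose $z^0$-part gives $2\sum_{m>0}a_{-m}a_m$ after rescaling — this assembles $\mathbf{I}_1$ as in \eqref{Litv-I1}. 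At order $h^3$ one collects: (i) the cross term $2\cdot L_{-m}\alpha_m$ summed over $m\neq0$, producing $\sum_{m\neq0}L_{-m}a_m$; (ii) the subleading piece of the rescaling of $\alpha_m$, namely the $-m\coth(m\tau)h/r$ correction fed into the quadratic term, producing the $\coth$ sum with coefficient $-\tfrac{2(1-\beta)}{\sqrt\beta}$ after using $1/r=1-\beta$; and (iii) the $z^0$-part of the cubic term $\tfrac16:(\sum\alpha_m z^{-m})^3:$, giving $\tfrac13\sum_{k+l+m=0}a_ka_la_m$ — together these reproduce $\mathbf{I}_2$ as in \eqref{Litv-I2}.

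I expect the main obstacle to be the careful tracking of the overall normalization and of the subleading $h$-corrections hidden inside the rescaling relations between $\alpha_m$ and $a_m$. In particular, the $h^3$-term requires keeping the $O(h)$ correction to $\alpha_m$ that feeds into what is naively an $h^2$-quadratic expression, and one must verify that the constant $c_1$ multiplying $I_N$ in Proposition~\ref{gl1 transfer}, the spectral-parameter prefactor $q^{-1}u_j$ in \eqref{Delta-hE}, and the contour normalization all combine to leave precisely the stated coefficients $2$, $\beta h^2$, $\beta^{3/2}h^3$ with no spurious factors. A secondary point is confirming that the coth factor enters symmetrically under $m\mapsto-m$ so that the sum over $m>0$ in \eqref{Litv-I2} is the correct reduction of a sum over $m\neq0$; since $\coth$ is odd and $m$ is odd, the product $m\coth(m\tau)$ is even, so the pairing of $+m$ and $-m$ modes is consistent, but this must be checked against the asymmetric rescaling of $\alpha_{\pm m}$. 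Once these normalizations are pinned down, the identification of the resulting operator with the Virasoro generators $L_n$ and the rescaled Heisenberg modes $a_m$ is immediate from the definitions, and the lemma follows.
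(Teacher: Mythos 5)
Your proposal is correct and is essentially the paper's own (implicit) argument: the paper simply states ``Combining these we obtain'' after recording the expansions of $\ssT^{DV}(z)$ and of $\Delta Z(z)$ with the rescaled oscillators, and the intended proof is exactly the mode-by-mode extraction of the $z^0$-coefficient of $\ssT^{DV}(z)\,\Delta Z(z)$ order by order in $h$ that you carry out, including the three $h^3$-contributions (the $L_{-m}\alpha_m$ cross term, the $-m\coth(m\tau)h/r$ correction inside the quadratic term with $1/r=1-\beta$, and the cubic term $\tfrac13\sum_{k+l+m=0}a_ka_la_m$). Your bookkeeping of the coefficients $2$, $\beta h^2$, $\beta^{3/2}h^3$ matches the statement, so no gap remains.
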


Operators \eqref{Litv-I1}, \eqref{Litv-I2}
are the first two members of the commuting family 
introduced and termed ILW hierarchy in \cite{L}.

The limit of other integrals is harder to compute, nevertheless the limit of the algebra generated by the local integrals of motion
is a commutative algebra of the same size.  Following \cite{L}, we expect that for generic values of parameters, the operator $\mathbf{I}_2$ is diagonalizable and has simple spectrum. Therefore the higher Hamiltonians can be found by the condition of commuting with 
$\mathbf{I}_2$. In particular, any eigenvector of $\mathbf{I}_2$ is automatically an eigenvector of the whole commutative algebra.

\subsection{The Bethe ansatz}
We study the limit of the solutions of the Bethe ansatz equations 
\eqref{BAE gl1} with $M=2$
in the ILW limit. 

We set $\epsilon=h/r$, $v_i=1+\tilde v_i \epsilon +o(\epsilon)$.
\begin{prop}\label{prop:ILW} 
Let $\tau \neq 0$ and let $t_i$, $i=1,\dots,N$ be a solution of 
\eqref{BAE gl1}. Then
in the 
ILW limit  
we have the following asymptotics: 
$t_i=1+\tilde t_i \epsilon+o(\epsilon)$, as $\epsilon\to 0$ for 
$i=1,\dots, N$. Moreover, the main terms $\tilde t_i$ satisfy 
the following system of equations:
\begin{align}\label{BAE limit}
e^{2\tau} \prod_{s=1}^2\frac{\tilde t_i-\tilde v_s}
{\tilde t_i-\tilde v_s-1}\prod_{j=1}^N 
\frac{\tilde t_i-\tilde t_j-1}{\tilde t_i-\tilde t_j+1}\ 
\frac{\tilde t_i-\tilde t_j+r}{\tilde t_i-\tilde t_j-r}\ 
\frac{\tilde t_i-\tilde t_j-r+1}{\tilde t_i-\tilde t_j+r-1}=-1,
\end{align}
$i=1,\dots,N$.
\end{prop}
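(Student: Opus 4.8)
The plan is to substitute the scaling ansatz directly into \eqref{BAE gl1} with $M=2$ and pass to leading order in $\epsilon$. Under the identification \eqref{q1q2q3} together with the ILW substitution one has $q_1=e^{(1-r)\epsilon}$, $q_2=e^{-\epsilon}$, $q_3=e^{r\epsilon}$ and $p=e^{2\tau}$; write $c_1=1-r$, $c_2=-1$, $c_3=r$, so that $q_s=e^{c_s\epsilon}$. If we knew that $t_i=1+\tilde t_i\epsilon+o(\epsilon)$ with finite, pairwise distinct $\tilde t_i$, then every factor of \eqref{BAE gl1} is a ratio of two quantities each vanishing like $\epsilon$, and the powers of $\epsilon$ cancel in pairs since numerator and denominator carry equally many vanishing factors. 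Concretely, each source factor tends to $\frac{\tilde t_i-\tilde v_s}{\tilde t_i-\tilde v_s-1}$ (using $q_2^{-1}v_s=e^{(\tilde v_s+1)\epsilon}(1+o(1))$), and for $j\neq i$ each interaction factor tends to $\prod_{s=1}^3\frac{(\tilde t_i-\tilde t_j)+c_s}{(\tilde t_i-\tilde t_j)-c_s}$, which is exactly the product of the three ratios appearing in \eqref{BAE limit}. Collecting these and using $p=e^{2\tau}$ reproduces \eqref{BAE limit}.

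A helpful simplification is that the diagonal $j=i$ term of the interaction product is \emph{exactly} $-1$, for every $\epsilon$ and independently of $t_i$: indeed $q_st_i-t_i=(q_s-1)t_i$ and $q_s^{-1}t_i-t_i=(q_s^{-1}-1)t_i$, so this term equals $\frac{(q_1-1)(q_2-1)(q_3-1)}{(q_1^{-1}-1)(q_2^{-1}-1)(q_3^{-1}-1)}=-1$, using $q_s^{-1}-1=-(q_s-1)/q_s$ and $q_1q_2q_3=1$. This matches the value $-1$ that the displayed product in \eqref{BAE limit} takes at $\tilde t_i=\tilde t_j$, so the $j=i$ contributions agree on both sides both before and after the limit.

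The real content, and the step I expect to be the main obstacle, is justifying the scaling itself: that all Bethe roots approach $1$, and do so at the precise rate $\epsilon$, with finite limits $\tilde t_i$. The mechanism is that the two source factors $\frac{t_i-v_s}{t_i-q_2^{-1}v_s}$ each tend to $1$ whenever $t_i$ stays a finite distance from $1$, because then numerator and denominator both tend to $t_i-1$; likewise each off-diagonal interaction factor tends to $1$ for any $j$ whose root $t_j$ has a limit distinct from that of $t_i$. Combined with the diagonal value $-1$, this would force the left-hand side of \eqref{BAE gl1} to tend to $-p=-e^{2\tau}$, which is $\neq-1$ precisely because $\tau\neq0$; this contradiction shows that no root can stabilize away from $1$. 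Refining, a source factor can acquire a limit different from $1$ only when $t_i-1$ is comparable to $v_s-1\sim\tilde v_s\epsilon$, i.e. when $t_i-1=O(\epsilon)$, which pins the rate.

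Making this fully rigorous requires ruling out intermediate rates (such as $t_i-1\sim\epsilon^{1/2}$) and coincidences of the rescaled roots. I would organize it either by taking logarithms of \eqref{BAE gl1} and analyzing the balance of the $O(\epsilon)$ terms, or, running the argument in reverse, by applying the implicit function theorem at a nondegenerate solution of \eqref{BAE limit} to produce a family of genuine Bethe roots with the asserted expansion (matching it against the continuous family coming from an eigenvector as in Proposition \ref{BA gl1 prop}). Throughout one invokes genericity of $\tau$ and of the $\tilde v_s$, together with distinctness of the $\tilde t_i$, to guarantee that all limiting factors are finite and nonzero, so that the limit may be taken termwise.
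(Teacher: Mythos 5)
Your computation of the limiting form of each factor is correct and matches the paper's (with $q_s=e^{c_s\epsilon}$, $c_1=1-r$, $c_2=-1$, $c_3=r$, $p=e^{2\tau}$), and your observation that the diagonal $j=i$ interaction term equals $-1$ exactly, for all $\epsilon$, because $q_s^{-1}-1=-(q_s-1)/q_s$ and $q_1q_2q_3=1$, is both correct and close to the heart of the matter. But there is a genuine gap precisely where you flag one: the justification that every root tends to $1$. Your contradiction argument --- ``the left-hand side tends to $-e^{2\tau}\neq -1$'' --- is valid only when the root $t_i$ is \emph{isolated} in the limit. If a nonempty set $J$ of roots converges to a common $a\neq 1$ (or to $\infty$), the interaction factors between members of $J$ need not tend to $1$, and your argument gives no contradiction. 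You acknowledge this (``coincidences of the rescaled roots'', ``intermediate rates'') but neither of your two proposed remedies is carried out; moreover the implicit-function-theorem route would only manufacture \emph{some} families of Bethe roots with the stated asymptotics, whereas the proposition asserts the asymptotics for \emph{every} solution.

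The paper closes exactly this gap with one short trick that extends your diagonal observation from a single root to a whole cluster. Suppose $t_j=a+\epsilon s_j+o(\epsilon)$ for $j$ in a nonempty set $J$, with $a\neq 1$ and all other roots staying away from $a$. Then in \eqref{BAE gl1} the source factors and the cross terms with $j\notin J$ tend to $1$, leaving the limiting system $e^{2\tau}\prod_{j\in J}\prod_{s}\frac{s_i-s_j+c_s a}{s_i-s_j-c_s a}=-1$ for $i\in J$. Now multiply these $|J|$ equations together: for each unordered off-diagonal pair the two factors cancel (up to the factor $q_s^2$, whose product over $s$ is $(q_1q_2q_3)^2=1$; in the limit this cancellation is exact antisymmetry), and the diagonal terms contribute your $(-1)^{|J|}$. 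One is left with $e^{2|J|\tau}=1$, contradicting $\tau\neq 0$. Hence no cluster can form away from $1$, and all roots tend to $1$; the rate $\epsilon$ is then pinned by the source factors as you describe. Without this cluster-product cancellation (or an executed substitute), the proof is incomplete at its only nontrivial step.
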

\begin{proof}
Suppose for some $a\in\C\cup\{\infty\}$, $a\neq 1$, we 
have a nonempty set $J\subset\{1,\dots,N\}$ such that 
$t_j=a+ \epsilon s_j+o(\epsilon)$ as $\epsilon\to 0$ 
for $j\in J$. Let us assume $\lim_{\epsilon\to 0} t_j\neq a$ 
for $j\not\in J$. Then substituting these asymptotics to the 
Bethe ansatz equations with $j\in J$ and taking the main term 
we obtain the following equations for $s_j$:
\begin{align*}
e^{2\tau} \prod_{j\in J} 
\frac{ s_i- s_j-a}{\tilde s_i- s_j+a}\ \frac{ s_i-s_j+ra}{ s_i- s_j-ra}\ 
\frac{ s_i-s_j-(r-1)a}{s_i- s_j+(r-1)a}=-1\,.
\end{align*}
But this system of equations has no solutions. Indeed, 
multiplying all equations together we obtain 
$e^{|J|\tau}=1$ which is a contradiction, since $\tau\neq 0$ 
and $J$ is nonempty.

It follows that  $\lim_{\epsilon\to 0} t_i=1$ for all $i$. 
The proposition follows.
\end{proof}
Equation \eqref{BAE limit} coincides with (3.1) in \cite{L} 
with identification of parameters 
\begin{align}\label{L par}
\sqrt{-1}b=\sqrt{\beta}\,,\quad 
P^2=\frac{c-1}{24}\hat{\pi}^2\,,
\quad 
\sqrt{-1}x_j=\frac{1-\beta}{\sqrt{\beta}}\tilde{t}_j\,.
\end{align}
We note that $P$ in \eqref{L par} originated in \cite{L} should not be confused with the zero mode operator $P$  in Section \ref{sec FKSW} originated in \cite{FKSW}.

From Corollary \ref{eigvI}  
and  Proposition \ref{prop:ILW},  
we obtain a proof of a conjecture of \cite{L}\footnote{ We believe that there is a sign error in the 
formula (3.2) of \cite{L} for the eigenvalue of  ${\bf I}_2$, namely, the formula  $\mathfrak{h}_2^{(N)}(\tau,P)=-i\sum_{j=1}^N x_j$ 
must read $\mathfrak{h}_2^{(N)}(\tau,P)=i\sum_{j=1}^N x_j$.}.
\begin{cor}\label{ILW spectrum}
For each eigenvector $w$ of $\mathbf{I}_2$ given by \eqref{Litv-I2}, 
there exists a solution $\tilde t_1,\dots, \tilde t_N$ of the equation 
\eqref{BAE limit} such that the eigenvalue of $\mathbf{I}_2$ is  
$\sqrt{\beta}^{-1}(1-\beta)\sum_{j=1}^N\tilde t_j$.
\end{cor}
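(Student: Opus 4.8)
The plan is to combine the eigenvalue formula from Corollary \ref{eigvI} with the Bethe-root asymptotics established in Proposition \ref{prop:ILW}. The target is the first nontrivial elliptic local integral $I_1$, whose ILW expansion is given in Lemma \ref{Litvinov}: the $h^3$ term is exactly $\beta^{3/2}\mathbf{I}_2$. So to read off the eigenvalue of $\mathbf{I}_2$ on an eigenvector $w$, I would extract the $\beta^{3/2}h^3$ coefficient from the eigenvalue of $I_1$ expressed via the Bethe roots. The cleanest route is to take $\ell=1$ in Corollary \ref{eigvI}, giving $q\, c_1 I_1 = C_1\mathbf{w}_1$, where $C_1 = (p q_3 q_1;p)_\infty/(q_1,q_3;p)_\infty$ by \eqref{identity2} and $\mathbf{w}_1 = \kappa_1\bigl(\sum_j t_j - \frac{q_3 q_1}{(1-q_3)(1-q_1)}\sum_k v_k\bigr)$ with $\kappa_1 = \prod_{s=1}^3(1-q_s)$.

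First I would substitute the ILW parametrization \eqref{q1q2q3} with $x^{2r}=e^h$, $x^{2s}=e^{2\tau}$, and the Bethe-root asymptotics $t_i = 1 + \tilde t_i\epsilon + o(\epsilon)$, $v_i = 1 + \tilde v_i\epsilon + o(\epsilon)$ from Proposition \ref{prop:ILW}, where $\epsilon = h/r$. Then I would expand each factor in $\mathbf{w}_1$, $C_1$, $c_1$, and the normalization $q^\ell$ in powers of $h$. The key bookkeeping is that $q_s = 1 + O(h)$ for each $s$, so $\kappa_1 = \prod_s(1-q_s) = O(h^3)$, and the shifts $\sum_j t_j - \frac{q_3 q_1}{(1-q_3)(1-q_1)}\sum_k v_k$ must be expanded carefully because the prefactor $\frac{q_3 q_1}{(1-q_3)(1-q_1)}$ is itself singular as $h\to 0$. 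Matching the total $h$-order on both sides against the $\beta^{3/2}h^3\mathbf{I}_2$ term of Lemma \ref{Litvinov} then pins down the eigenvalue of $\mathbf{I}_2$.

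The main obstacle I expect is the delicate cancellation in the combination $\sum_j t_j - \frac{q_3 q_1}{(1-q_3)(1-q_1)}\sum_k v_k$. Individually $\sum_j t_j \to N$ and the second sum diverges, so one must expand both to high enough order in $h$ and use that $\sum_j \tilde t_j$ and $\sum_k \tilde v_k$ enter only through their difference-type combinations; the leading divergences should cancel, leaving a finite answer proportional to $\sum_j \tilde t_j$. I would track the first two or three orders of each $q_s$ (recalling $q_1 = x^{2(1-r)}$, $q_3 = x^{2r}$, $q_2 = x^{-2}$, so $q_1 = e^{(1-r)h/r}$, $q_3 = e^{h}$, $q_2 = e^{-h/r}$) and verify that the $h^{-1}$ and $h^0$ pieces of the ratio cancel against the boson and zero-mode contributions. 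Combined with the scalar factor relating $h^2$ and $h^3$ terms through $\sqrt\beta$, the surviving finite piece gives precisely $\sqrt\beta^{-1}(1-\beta)\sum_j \tilde t_j$.

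Finally, having matched the $\mathbf{I}_2$ eigenvalue to $\sqrt\beta^{-1}(1-\beta)\sum_j \tilde t_j$, I would invoke Proposition \ref{prop:ILW} once more to guarantee that for the given eigenvector the Bethe roots $\tilde t_1,\dots,\tilde t_N$ genuinely exist and solve \eqref{BAE limit}, completing the identification. Since Proposition \ref{convergent prop} ensures $\mathbf{I}_1$ (hence, generically, $\mathbf{I}_2$) is diagonalizable with simple spectrum, every eigenvector arises this way, so the correspondence between eigenvectors and solutions is exhaustive. The footnote's sign correction to (3.2) of \cite{L} is exactly what makes the sign of $\sum_j \tilde t_j$ come out positive, and I would flag that the matching depends on that corrected sign.
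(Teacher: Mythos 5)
Your plan is essentially the paper's proof: take $\ell=1$ in Corollary \ref{eigvI}, use \eqref{identity2} for the leading coefficient, substitute the root asymptotics of Proposition \ref{prop:ILW}, and compare the $h^3$ coefficient with Lemma \ref{Litvinov}. The only thing you over-anticipate is the ``delicate cancellation'': the prefactors combine exactly to give $I_1=-q(1-q_1)(1-q_3)\sum_{j=1}^N t_j+q^{-1}(v_1+v_2)$, so the $v$-term is finite from the outset (the paper simply normalizes $q^{-1}(v_1+v_2)=2$ to match the constant term of Lemma \ref{Litvinov}) and no cancellation against boson or zero-mode contributions is required.
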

\begin{proof}
By Corollary \ref{eigvI} and formula \eqref{identity2},
the corresponding eigenvalue of $I_1$ is given by 
\[
I_1= -q(1-q_1)(1-q_3)\sum_{j=1}^Nt_j+q^{-1}(v_1+v_2)\,.
\]
We choose $q^{-1}(v_1+v_2)=2$. Recall that $q_1=e^{-(r-1)\epsilon}$, $q_2=e^{-\epsilon}$, 
$q_3=e^{r\epsilon}$ and $\beta=(r-1)/r$. 
In the limit $\epsilon=h/r \to 0$ we have $t_j=1+\epsilon \tilde{t}_j+\cdots$, so that 
the RHS becomes
\[
 2+2\beta h^2N +\beta^{3/2}h^3\times\frac{1-\beta}{\sqrt{\beta}}
\sum_{j=1}^N\tilde{t}_j+O(h^4)\,.
\]
Comparing this with the expansion in Lemma \ref{Litvinov} we obtain the stated result.

\end{proof}

\section{Quantum toroidal ($\gl_N$,$\gl_M$) duality of XXZ models}\label{sec:duality}
We discuss the quantum toroidal analog of a duality
known for Gaudin models, see  \cite{MTV1}, \cite{MTV2}.

\subsection{The ($\E_1$,$\E_n$) duality}\label{1-1 duality}

Consider an $n$-fold tensor product $\F(u_1)\otimes\cdots\otimes\F(u_n)$ of Fock representations 
of algebra $\E_1=\E_1(q_1,q_2,q_3)$. At the same time consider the Fock representation $\F_{\mu}^\vee(v)$ of $\E_n=\E_n(q_1^\vee,q_2^\vee,q_3^\vee)$. We always assume $q_2^\vee=q_2=q^2$ while $q_1$, $q_1^\vee$ and $u_1,\dots, u_n$, $v$ are arbitrary non-zero parameters.

We identify the $\F(u_1)\otimes\cdots\otimes\F(u_n)$  with the "`sector"' $\F_{\mu,\beta}^\vee=\cH\otimes \C e^{\beta}\subset \F_{\mu}^\vee(u)$ of fixed $\beta\in \bar Q+\bar\Lambda_{\mu}$ by identifying the respective generators
of the Heisenberg algebras as follows.

Let $H_{m}^{\perp,(i)}=1\otimes\cdots\otimes H^{\perp}_m\otimes\cdots\otimes 1$ be the Heisenberg generators for $\E_1$ modules and
$H^{\perp,\vee}_{i,-m}$ for the $\E_n$ module.

For $n\ge3$ we set 
\begin{align*}
&(q_3^{m/2}-q_3^{-m/2})H^{\perp,\vee}_{i,-m}=
(q_3^\vee q_2^{1/2})^{(n-i+1)m}\bigl(H_{-m}^{\perp,(i-1)}
-q_2^{-m/2}
H_{-m}^{\perp,(i)}
\bigr)\,,
\\
&(q_1^{m/2}-q_1^{-m/2})H^{\perp,\vee}_{i,m}=
(q_3^\vee q_2^{1/2})^{-(n-i+1)m}\bigl(q_2^{m/2}H_m^{\perp,(i-1)}-H_m^{\perp,(i)}\bigr)\,,
\end{align*}
where $m>0$ and $H_m^{\perp,(0)}$ is understood as 
$(q_3^\vee q_2^{1/2})^{nm}H_m^{\perp,(n)}$. 
For $n=2$ we modify this as
\begin{align*}
&(q_3^{m/2}-q_3^{-m/2})H^{\perp,\vee}_{0,-m}=
q_2^{m/2}H_{-m}^{\perp,(1)}-H_{-m}^{\perp,(2)}\,,\\
&(q_3^{m/2}-q_3^{-m/2})H^{\perp,\vee}_{1,-m}=
-(q_3^\vee)^m q_2^{m/2}H_{-m}^{\perp,(1)}+(q_3^\vee)^{-m}H_{-m}^{\perp,(2)}\,,\\
&(q_1^{m/2}-q_1^{-m/2})H^{\perp,\vee}_{0,m}=
H_{m}^{\perp,(1)}-q_2^{-m/2} H_{m}^{\perp,(2)}\,,\\
&(q_1^{m/2}-q_1^{-m/2})H^{\perp,\vee}_{1,m}=
-(q_3^\vee)^{-m}q_2^{-m}H_{m}^{\perp,(1)}+(q_3^\vee)^{m}q_2^{m/2}H_{m}^{\perp,(2)}\,,
\end{align*}
and for $n=1$
\begin{align*}
&\frac{H^{\perp,\vee}_{-m}}{1-(q_3^\vee)^{-m}}=-
\frac{H^{\perp}_{-m}}{1-q_3^{-m}}\,,
\quad
\frac{H^{\perp,\vee}_m}{1-(q_1^\vee)^{-m}}=-
\frac{H^{\perp}_m}{1-q_1^{-m}}\,.
\end{align*}
\medskip 

The Taylor coefficients $\{I_N\}$ of $\mc E_1$ transfer matrix depend on the parameters $q_1,q_2,q_3$ of the algebra 
and the twisting parameter
$p=\bar{p} q_2^{-n/2}$.  We also use $p^{*}=\bar{p} q_2^{n/2}$ 
so that $(p^{*})^{-1/n}p^{1/n} q_2=1$.

The Taylor coefficients $\{G_{\nu,N}\}$ of $\mc E_{n}$ transfer matrix  act on each `sector'
$\F_{\mu,\beta}^\vee$.
These coefficients depend on the parameters $q_1^\vee,q_2^\vee,q_3^\vee$ of the algebra and the twisting parameters
\[
p^\vee=\bar{p}^\vee q\,,\quad p_1^\vee=\bar{p}^\vee_1q^{-(\bar\alpha_1,\beta)}\,,\quad
\dots, \quad p_{n-1}^\vee =\bar{p}^\vee_{n-1}
q^{-(\bar\alpha_{n-1},\beta)}
\,.
\]
We also use $p^{*\,\vee}=p^\vee  q_2$.

We match the two sets of parameters as follows.
\begin{align*}
&q_1^\vee=(p^{*})^{-1/n}\,,\quad q_2^\vee=q_2\,,\quad
q_3^\vee=p^{1/n}\,,\quad p^\vee=q_3\,,\quad
p^{*\,\vee}=q_1^{-1}\,,
\\
&p_1^\vee=\frac{u_2}{u_1}\,,\quad \dots,\quad  p_{n-1}^\vee=\frac{u_n}{u_{n-1}}\,.
\end{align*}
The parameters $v$ and $\prod_{i=1}^n u_i$ are not essential for what follows and can be arbitrary.

The $(\E_1,\E_n)$ duality of XXZ systems is then the following statement.
\begin{thm} \label{1n duality}

The $\E_1(q_1,q_2,q_3)$ transfer matrices commute 
with the $\E_n(q_1^\vee,q_2^\vee,q_3^\vee)$ transfer matrices on the sector $\F_{\mu,\beta}^\vee$. 
\end{thm}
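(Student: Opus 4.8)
The plan is to reduce the statement to the commutativity of the individual Taylor coefficients and then to match these coefficients with the elliptic integrals of motion of \cite{FKSW}, whose mutual commutativity is already known. Since $T(u)$ and $T_\nu(u)$ are generating series in $u^{\mp1}$ whose coefficients are, respectively, the operators $\{I_N\}$ of Proposition \ref{gl1 transfer} and the operators $\{G_{\nu,N}\}$ given by the general-$n$ analogue of Proposition \ref{gl2 transfer}, it suffices to prove that
\[
[I_k,\,G_{\nu,m}]=0\qquad(\forall\,k,m\geq0,\ \nu)
\]
as operators on the sector $\F_{\mu,\beta}^\vee$, where both families act after the identification of Section \ref{1-1 duality}. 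That each family commutes with itself is already furnished by the Yang--Baxter equation for the respective algebra, so the entire content lies in this cross-commutativity.

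First I would rewrite both families on the common Heisenberg algebra supplied by the oscillator identification. On the $\E_1$ side, the coproduct formula \eqref{Delta-hE} and its $n$-fold iterate express $\Delta^{(n)}\hF^\perp(z)$ as a sum of $n$ vertex operators; under the parameter matching $q_1^\vee=(p^{*})^{-1/n}$, $q_3^\vee=p^{1/n}$, $p^\vee=q_3$, $p^{*\,\vee}=q_1^{-1}$ this is precisely the current entering the elliptic local integrals $\{\ssI_N\}$ of \eqref{ssI}, so that the $I_N$ generate the same commutative algebra as the $\ssI_N$, exactly as in Corollary \ref{gl1 compare}. On the $\E_n$ side, the dressed currents $\hF_i^\perp(w)$ defined by \eqref{Fhat} have the same oscillator part as the screening currents $\ssF_{1-i}(w)$, and under the substitutions \eqref{p1-pi} the theta-function kernels appearing in the $\E_n$ analogue of \eqref{G*nuN} match those of \eqref{ssG}; hence, as in Corollary \ref{gl2 compare}, the $G_{\nu,N}$ coincide with the elliptic non-local integrals $\ssG_{\nu,N}$ of the corresponding deformed $W$-algebra.

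With both families so identified, the cross-commutativity $[I_k,G_{\nu,m}]=0$ follows from the commutativity $[\ssI_k,\ssG_{\nu,m}]=0$ of the elliptic local and non-local integrals of motion, established by direct calculation in \cite{FKSW}, \cite{KS} for $n=2$ and in \cite{FKSW2} in general. Alternatively, one may argue entirely within the quantum toroidal framework: since $\Delta^{(n)}\hF^\perp(z)$ and the $\hF_i^\perp(w)$ are built from the same oscillators, their exchange relations can be read off from the Heisenberg commutators, and the commutator $I_kG_{\nu,m}-G_{\nu,m}I_k$ can then be recast as a single multiple contour integral that vanishes by the standard mechanism through which screening operators commute with $W$-algebra currents; this is the route indicated for the simplest coefficients in the Introduction.

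The hard part will be the second paragraph, namely establishing the dictionary in full for general $n$: Corollaries \ref{gl1 compare} and \ref{gl2 compare} are only spelled out for $n\leq2$, so one must verify that the dressing factors $\hK_i^{\pm,\perp}$ produced by \eqref{Fhat}--\eqref{Ehat} together with the infinite-product and theta-function kernels of Propositions \ref{gl1 transfer} and \ref{gl2 transfer} go over, under the identification of oscillators and the substitutions $p=x^{2r}$, $\bar p_1q^{-H_{1,0}}=x^{-2\hat\pi}$, into the precise elliptic kernels $[u]_r$, $[u]_s$ of \cite{FKSW2}. For the direct route the delicate point is instead the contour analysis: one must check that the pole structure of the combined currents is compatible with the integration cycles prescribed in Propositions \ref{gl1 transfer} and \ref{gl2 transfer}, so that no residue survives and the commutator indeed vanishes.
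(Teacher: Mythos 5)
Your proposal takes essentially the same route as the paper: the authors prove Theorem \ref{1n duality} precisely by identifying the Taylor coefficients $I_N$ and $G_{\nu,N}$ with the elliptic local and non-local integrals of motion via Corollaries \ref{gl1 compare} and \ref{gl2 compare} and then citing the commutativity established by direct computation in \cite{FKSW}, \cite{FKSW2}, \cite{KS}. The only point you do not mention is the paper's remark that, since the $\E_1$ and $\E_n$ families each commute among themselves and $I_1$ generically has simple spectrum (Proposition \ref{convergent prop}), it would suffice to verify commutativity of the first non-trivial integrals only, which is a much lighter computation than the full check in the cited references.
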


This is a re-interpretation of the commutativity between the elliptic local and non-local
integrals of motion, established in \cite{FKSW},\cite{FKSW2}, \cite{KS} by direct computation.
Note that to prove Theorem \ref{1n duality}  it is sufficient to prove commutativity of the first non-trivial integrals only. Indeed, we know that $\E_n$ transfer matrices commute and that the $\E_1$ transfer matrices commute. We also know that the spectrum of the $\E_1$ integral $I_1$ is simple when acting on a generic tensor product of Fock representations. So, the computation is considerably simpler than in \cite{FKSW}, \cite{FKSW2}, \cite{KS}, where the commutativity of all integrals of motions was checked by a tedious direct computation.

Note also that in the non-affine versions of the duality, see for example, \cite{MTV1}-\cite{MTV3},
the twisting parameters of the model on one side correspond to the evaluation parameters on 
the other side.
However, in the affine setting we observe a new feature. Namely, the twisting parameter 
corresponding to the null root is exchanged with the parameter of the dual algebra.

It should be stressed 
that the commutativity concerns only the integrals of motion; 
the quantum toroidal algebras
$\E_1(q_1,q_2,q_3)$ and $\E_n(q^\vee_1,q^\vee_2,q^\vee_3)$
do not commute themselves. 
For instance we have for $n=1$
\begin{align*}
&(w-z)(w-q_1^\vee q_3z)(q_1 w-z)(q_3^\vee w-z) F^{\perp,\vee}(z) F^{\perp}(w)
\\
&=(z-w)(z-q_1 q_3^\vee w)(q_1^\vee z-w)(q_3 z-w) F^{\perp}(w)F^{\perp,\vee}(z)\,.
\end{align*}
 For $n>1$ the relations are more complicated.

\subsection{The general case}\label{m-n duality}\label{duality sec}
In general we conjecture that the following is true.

Let $m,n\in\Z_{\geq 1}$.
Consider the $nm$ bosons $H_s^{(i,j)}$, $s\in\Z\setminus \{0\}$, 
$i=1,\dots, n$, $j=1,\dots, m$,
$[H_s^{(i,j)},H_r^{(k,l)}]=s
\delta_{ik}\delta_{jl}\delta_{s,-r}$.
Let ${\mc U}_{m,n}=\C[H_s^{(i,j)},\ s<0]^{i=1,\dots, n}_{j=1,\dots,m}$. 

Consider the group $G=\Z^{mn}$ and fix free generators $e^{(i,j)}$, $i=1,\dots, n$, $j=1,\dots, m$. 

For any $a\in\Z$, $i\in\{1,\dots, n\}$, 
denote  $\Z^{m-1}_{i,a}=\sum_{j=1}^m a_{j}e^{(i,j)} 
\mid
\sum_{j=1}^m a_{j}=a\}\subset G$. 
Then $\Z^{m-1}_{i,0}$ is a subgroup of $\Z^{mn}$ isomorphic to $\Z^{m-1}$, 
the set $\Z^{m-1}_{i,a}
=ae^{(i,1)}+ \Z^{m-1}_{i,0}$ is a coset,  
and $G=\oplus_{i,a} \Z^{m-1}_{i,a}$. 
  
Similarly, for any $b\in\Z$, $j\in\{1,\dots, m\}$, denote  
$\Z^{n-1}_{j,b}=\sum_{i=1}^n b_{i}e^{(i,j)} 
\mid
\sum_{i=1}^n b_{i}=b\}\subset G$.
	
	Let $\C G$ be the group ring of $G$ and $\C\Z^{m-1}_{i,a}$,  $\C\Z^{n-1}_{j,b}$ the corresponding complexifications.
	
Consider the vector space $V={\mc U_{m,n}}\otimes \C G$.
 Choose $\nu_1,\dots,\nu_n\in\{0,1,\dots,m-1\}$ and  $\mu_1,\dots,\mu_m\in\{0,1,\dots,n-1\}$.

Then we expect that one can define an action of 
quantum toroidal algebras $\mc E_m(q_1,q_2,q_3)$ and $\mc E_n(q_1^\vee,q_2,q_3^\vee)$ on $V$ such that the following is true.

\begin{enumerate}
\item As an $\mc E_m$ module,  $V$ is a direct sum of tensor products of $n$ Fock modules: for any $\bs a=(a_1,\dots, a_n)\in \Z^n$, 
there exist $(u_1(\bs a),\dots,u_n(\bs a))\in(\C^\times)^n$, 
such that
$$V_{n,\bs a}:={\mc U_{m,n}}\otimes\left({\otimes}_{i=1}^n \C \Z^{m-1}_{i,a_i}\right)=\otimes_{i=1}^n \mc F_{\nu_i}(u_i(\bs a)).$$

\item As an $\mc E_n$ module, $V$ is a direct sum of tensor products of $m$ Fock modules: for any $\bs b=(b_1,\dots, b_m)\in \Z^m$, there exist $(v_1(\bs b),\dots,v_m(\bs b))\in(\C^\times)^m$, such that
$$V_{m,\bs b}:={\mc U_{m,n}}\otimes\left({\otimes}_{j=1}^m \C \Z^{n-1}_{j,b_j}\right)=\otimes_{j=1}^m \mc F_{\mu_j}(v_j(\bs b)).$$

\item The $\mc E_n$ transfer  matrices commute with  $\mc E_m$ transfer  matrices. Namely, let $\bs a,\bs b$ be such that $\sum_{i=1}^n a_i=\sum_{j=1}^m b_j$, and consider $V_{\bs a,\bs b}:=V_{n,\bs a}\cap V_{m,\bs b}$. Then the $\mc E_m$ and $\E_n$ transfer matrices act in $V_{\bs a,\bs b}$. Moreover, these transfer matrices commute 
if the twisting parameters $(p_1,\dots,p_{m-1})$ are related to evaluation parameters $(v_1(\bs b):\dots:v_m(\bs b))$ and, similarly 
$(p^\vee_1,\dots,p^\vee_{n-1})$ are related to $(u_1(\bs a):\cdots:u_n(\bs a))$ while the 
 twisting parameters corresponding to the null root, $p$ and $p^\vee$, are related to $q_3^\vee$ and $q_3$.

\end{enumerate}
The products
$\prod_{i=1}^n u_i(\bs a)$
and 
$\prod_{j=1}^m v_j(\bs b)$
are not fixed by the requirement of commutativity of transfer matrices and can be chosen arbitrarily.

\medskip

Theorem \ref{1n duality} asserts that above picture holds for the case of $m=1$ and any $n\in \Z_{\geq 1}$. In this case we have $G=\Z^n$. We can choose in an arbitrary way the following parameters: $q_1$, $q_1^\vee$, $q_2=q_2^\vee$, $a\in\Z$, $v\in\C^\times$, $\nu\in\{0,\dots,n-1\}$, $i\in\{1,\dots,n\}$, and $(u_1,\dots, u_n)\in(\C^\times)^n$.

Then, the space  $\mc U_{1,n}\otimes \C\Z^n_a$ is identified with $\mc E_n$ module $\F_{\mu}(v)$ and the
the space $\mc U_{1,n}\otimes \C(0,\dots,a,\dots,0)$, where $a$ is in the $i$-th position, with $\mc E_1$ module 
$\F(u_1)\otimes\cdots\otimes \F(u_n)$.

We plan to describe the $(\mc E_m,\mc E_n)$ duality in a subsequent paper.

\subsection{The duality between qKdV and quantum affine Gaudin models}
We discuss the specializations of the $(\E_1,\E_2)$ duality of XXZ systems
to ILW and then, further, to conformal limits.

\medskip

Recall the $(\E_1,\E_2)$ duality of XXZ systems discussed in Section \ref{m-n duality}. 
The algebras of local integrals of motion $I_N$ and of non-local integrals of motion 
$G_{\nu, N}$ commute. 
In particular, the Bethe vectors of the two algebras are the same (up to proportionality). 
Therefore, we expect that there is a natural bijection between solutions of 
the $\mc E_1$ 
Bethe ansatz equations \eqref{BAE gl1} with $M=2$
and those of the $\mc E_2$ Bethe ansatz equations \eqref{BAE gl2} 
with $M_0=1$ and $M_1=0$ (where all parameters have checks).

We considered the ILW limit
of the $\mc E_1$ side in Section \ref{secILW}.
For the $\mc E_2$ side we have
\begin{align*}
&q_1^\vee=e^{-\tau}\bigl(1+\epsilon+o(\epsilon)\bigr)\,,\quad 
q_2^\vee=1-\epsilon+o(\epsilon)\,, \quad 
q_3^\vee=e^{\tau}\,,  \\ 
&p^{\vee}
=1+r \epsilon+o(\epsilon)\,,\quad  
p^\vee_1
=1-\frac{\hat{\pi}}{2} \epsilon+o(\epsilon)\,,
\quad v\,,\tau : \text{fixed}\,.
\end{align*}
It is not immediate to take the limit of the non-local integral of 
motions $G_{\nu,N}$. However, we can easily take the limit of the 
$\mc E_2$ 
Bethe ansatz equations.

\medskip
Let $s_i, t_j$ be solutions of \eqref{BAE gl2} with $M_0=1$ and $M_1=0$. 
Let their ILW limits
be  $\lim_{\epsilon\to 0}s_i=\tilde s_i$,
and $\lim_{\epsilon\to 0}t_j=\tilde t_j$, where  $i=1,\dots,N_0$, $j=1,\dots, N_1$.  
Then we easily get the following lemma.

\begin{lem}\label{gl2 ILW}
The limits $\tilde s_i$, $\tilde t_j$ satisfy the system of equations:
\begin{align}
\frac{r-\hat{\pi}-2}{\tilde s_i}+\frac{1}{\tilde s_i-v}
-\sum_{k=1,\ k\neq i}^{N_0}\frac{2}{\tilde s_i-\tilde s_k}
+\sum_{k=1}^{N_1}\left(\frac{1}{\tilde s_i-e^{\tau}\tilde  t_k}
+\frac{1}{\tilde s_i-e^{-\tau} \tilde t_k}\right)=0, 
\label{BAE limit gl2}\\ 
\frac{\hat{\pi}-1}{\tilde t_j}-\sum_{k=1,\ k\neq j}^{N_1}
\frac{2}{\tilde t_j-\tilde t_k}+
\sum_{k=1}^{N_0}\left(\frac{1}{\tilde t_j-e^{\tau} \tilde s_k}
+\frac{1}{\tilde t_j-e^{-\tau} \tilde s_k}\right)=0, 
\label{BAE limit gl2-2}
\end{align}
where $i=1,\dots,N_0$ and $j=1,\dots, N_1$. \qed
\end{lem}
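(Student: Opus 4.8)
The plan is to obtain \eqref{BAE limit gl2}, \eqref{BAE limit gl2-2} by a direct asymptotic analysis: substitute the stated $\epsilon$-expansions of the parameters into the specialized Bethe equations and extract the coefficient of $\epsilon$. This is the same mechanism as in Proposition \ref{prop:ILW}, with the simplification that here the limits $\tilde s_i,\tilde t_j$ are assumed to exist, so that only the expansion remains. First I would specialize \eqref{BAE gl2}, \eqref{BAE gl2-2} to $M_0=1$, $M_1=0$ and isolate, in each self-interaction product, the diagonal factor, which is exactly $\frac{q_2 s_i-s_i}{q_2^{-1}s_i-s_i}=-q_2$. Pulling it out and combining it with the right-hand side $-1$ recasts each Bethe equation as an identity of the form (a product of factors, each tending to $1$ as $\epsilon\to0$) $=1$; since $q_2\to1$, $p_0,p_1\to1$ and every remaining ratio tends to $1$, the limiting equations are read off as the vanishing of the coefficient of $\epsilon$ in the logarithm.

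Next I would expand factor by factor, writing $s_i=\tilde s_i+\epsilon\sigma_i$, $t_j=\tilde t_j+\epsilon\theta_j$. The key structural point is that in every ratio the Bethe root enters numerator and denominator with matching leading coefficients, so the unknown first-order corrections $\sigma_i,\theta_j$ cancel and the limiting system closes on $\tilde s,\tilde t$ alone. Concretely, the self-product $\prod_{j\neq i}\frac{q_2 s_i-s_j}{q_2^{-1}s_i-s_j}$ contributes $-2\epsilon\tilde s_i\sum_{j\neq i}(\tilde s_i-\tilde s_j)^{-1}$; in the cross product, the limits $q_1\to e^{-\tau}$, $q_3\to e^{\tau}$ (so $q_1^{-1}\to e^{\tau}$, $q_3^{-1}\to e^{-\tau}$) together with the order-$\epsilon$ shift $q_1=e^{-\tau}(1+\epsilon+\cdots)$ make the pairings $(q_1,q_3^{-1})$ and $(q_3,q_1^{-1})$ produce the finite residues $\epsilon\tilde s_i\big(\frac{1}{\tilde s_i-e^{\tau}\tilde t_k}+\frac{1}{\tilde s_i-e^{-\tau}\tilde t_k}\big)$. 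Dividing the full coefficient of $\epsilon$ by $\tilde s_i$ then reproduces the $\sum 2/(\tilde s_i-\tilde s_k)$ term and the $e^{\pm\tau}$ sums of \eqref{BAE limit gl2}, and an identical computation gives \eqref{BAE limit gl2-2}.

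The delicate step, where I expect the genuine bookkeeping to lie, is the scalar terms $\frac{r-\hat{\pi}-2}{\tilde s_i}$ and $\frac{\hat{\pi}-1}{\tilde t_j}$. These assemble the $O(\epsilon)$ parts of $\log(q_2 p_0)$ and $\log(q_2 p_1)$, the extracted self-factor $-q_2$, and, for the $s$-equation, the constant piece of $\frac{v}{\tilde s_i-v}=-1+\frac{\tilde s_i}{\tilde s_i-v}$. Computing them correctly requires the precise expansions of $p^\vee$, $p_1^\vee$ and of $q=q_2^{1/2}$, together with the dictionary $p_0=\bar p^\vee(\bar p_1^\vee)^{-1}q^{-1}$, $p_1=\bar p_1^\vee$ relating the transfer-matrix twists to the Bethe-equation twists; this is where sign and half-integer conventions must be tracked with care. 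Once these constants are assembled and each equation is divided through by $\tilde s_i$ (respectively $\tilde t_j$), the stated system follows.
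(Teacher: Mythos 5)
Your proposal is correct and takes the same route as the paper, which offers no written proof beyond asserting that the system follows ``easily'' by substituting the stated $\epsilon$-expansions into \eqref{BAE gl2}, \eqref{BAE gl2-2} with $M_0=1$, $M_1=0$ and extracting the order-$\epsilon$ term, exactly as you do. You correctly identify the essential structural points --- the extraction of the diagonal factor $-q_2$, the cancellation of the unknown first-order corrections $\sigma_i,\theta_j$ in every ratio, and the pairing of $(q_1,q_3^{-1})$ and $(q_3,q_1^{-1})$ producing the $e^{\pm\tau}$ terms --- and you rightly locate the only remaining (mechanical) bookkeeping in the assembly of the constant terms from $\log(q_2p_0)$, $\log(q_2p_1)$ and the $-1$ from $\frac{v}{\tilde s_i-v}$.
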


Thus, we expect that there is a natural bijection between solutions of the equations 
\eqref{BAE limit gl2}, \eqref{BAE limit gl2-2}
and those of \eqref{BAE limit}. 
This is a duality of the XXX type equation with 
a Gaudin type equation. In a spirit, it is similar to the duality 
between trigonometric Gaudin $\gl_m$ model and XXX (Yangian related) $\gl_n$ model, 
observed in \cite{MTV3}. 
On the other hand, 
the equations described in Lemma \ref{gl2 ILW} 
do not seem to be in the literature.

\medskip

In the conformal limit we further send $\tau \to 0$.  Let $\lim_{\epsilon\to 0}\tilde s_i=\bar s_i$,
and $\lim_{\epsilon\to 0}\tilde t_j=\bar t_j$  
be the corresponding limits. Then we obviously have the following lemma.
\begin{lem}\label{affine Gaudin}
The limits $\bar s_i$, $\bar t_j$ satisfy the  system of equations:
\begin{align}\label{affine Gaudin BAE}
\frac{r-\hat{\pi}-2}{\bar s_i}+\frac{1}{\bar s_i-v}-\sum_{k=1,\ k\neq i}^{N_0}\frac{2}{\bar s_i-\bar s_k}+\sum_{k=1}^{N_1}\frac{2}{\bar s_i-\bar  t_k}=0, \\ 
\frac{\hat{\pi}-1}{\bar t_j}-\sum_{k=1,\ k\neq j}^{N_1}\frac{2}{\bar t_j-\bar t_k}+\sum_{k=1}^{N_0}\frac{2}{\bar t_j-\bar s_k}=0, \label{affine Gaudin BAE 2}
\end{align}
where $i=1,\dots, N_0$ and $j=1,\dots,N_1$. \qed
\end{lem}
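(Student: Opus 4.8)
The plan is to obtain the system \eqref{affine Gaudin BAE}, \eqref{affine Gaudin BAE 2} directly as the $\tau\to0$ limit of the equations \eqref{BAE limit gl2}, \eqref{BAE limit gl2-2} of Lemma \ref{gl2 ILW}. Those equations are rational in the finitely many unknowns $\tilde s_i,\tilde t_j$, and every term except the two coupling sums
\[
\sum_{k=1}^{N_1}\Bigl(\frac{1}{\tilde s_i-e^{\tau}\tilde t_k}+\frac{1}{\tilde s_i-e^{-\tau}\tilde t_k}\Bigr)\,,
\qquad
\sum_{k=1}^{N_0}\Bigl(\frac{1}{\tilde t_j-e^{\tau}\tilde s_k}+\frac{1}{\tilde t_j-e^{-\tau}\tilde s_k}\Bigr)
\]
is manifestly independent of $\tau$. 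So the only step that needs any attention at all is the behaviour of these coupling sums as $\tau\to0$.

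First I would invoke the hypothesis recorded in the statement, that the finite limits $\bar s_i,\bar t_j$ of $\tilde s_i,\tilde t_j$ as $\tau\to0$ exist, and then pass to the limit termwise in each of the finitely many summands of \eqref{BAE limit gl2}, \eqref{BAE limit gl2-2}. Under $e^{\pm\tau}\to1$ the two poles $e^{\tau}\tilde t_k$ and $e^{-\tau}\tilde t_k$ coalesce to the single point $\bar t_k$, so that
\[
\frac{1}{\tilde s_i-e^{\tau}\tilde t_k}+\frac{1}{\tilde s_i-e^{-\tau}\tilde t_k}\ \longrightarrow\ \frac{2}{\bar s_i-\bar t_k}\,,
\]
and similarly for the second family. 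The remaining terms $(r-\hat{\pi}-2)/\tilde s_i$, $1/(\tilde s_i-v)$, the self-interaction sums $\sum_{k\neq i}2/(\tilde s_i-\tilde s_k)$, and their $\tilde t$-counterparts carry no $\tau$ dependence and in the limit simply acquire bars. Collecting everything reproduces \eqref{affine Gaudin BAE}, \eqref{affine Gaudin BAE 2} verbatim.

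The one genuine subtlety, and the main obstacle, is to justify that this termwise limit is legitimate. One must exclude resonances in which $\bar s_i=\bar t_k$ for some pair, for then the coalescing double pole would render the coupling term singular and the naive value $2/(\bar s_i-\bar t_k)$ meaningless; concretely, if $\tilde s_i-\bar t_k=O(\tau)$ one would have to weigh the numerator spacing against $\tau$ itself. For generic $v$, $r$ and $\hat{\pi}$ the limiting roots $\bar s_i,\bar t_j$ are distinct and none of the denominators $\bar s_i$, $\bar s_i-v$, $\bar s_i-\bar s_k$, $\bar t_j$, $\bar t_j-\bar t_k$, $\bar s_i-\bar t_k$ vanishes, so the passage to the limit is immediate; this is the force of the word ``obviously'' in the statement. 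I would therefore state the lemma under this genericity assumption, or equivalently along solution branches on which the roots stay separated, and note at the end that the resulting system \eqref{affine Gaudin BAE}, \eqref{affine Gaudin BAE 2} is precisely the Bethe ansatz system of the $\slth$ Gaudin model on the tensor product of a Verma module with the basic level-one module, which is the structural fact motivating the lemma.
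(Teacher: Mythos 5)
Your proposal is correct and follows exactly the route the paper intends: the paper offers no written proof beyond declaring the $\tau\to 0$ limit of Lemma \ref{gl2 ILW} ``obvious,'' and the content of that claim is precisely your termwise passage $e^{\pm\tau}\to 1$ merging each pair of coupling terms into $2/(\bar s_i-\bar t_k)$ while the remaining terms are $\tau$-independent. Your added remark on excluding resonances $\bar s_i=\bar t_k$ (genericity of the limiting roots) is a reasonable precaution that the paper silently assumes.
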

Curiously, these equations coincide with the Bethe ansatz equations 
for the trigonometric Gaudin model associated with the vacuum level 1 
representation $L_{1,0}$ of the affine Lie 
algebra $\hat{\mathfrak{sl}}_2$. 

One can also think of these equations as the Bethe ansatz equations for 
the Gaudin model 
associated with the affine Lie algebra 
$\hat{\mathfrak{sl}}_2$ acting 
on
a tensor product of the basic representation $L_{1,0}$ and of the 
Verma module $V_{r-\hat{\pi}-2,\hat{\pi}-1}$ with highest weight 
$(r-\hat{\pi}-2)\La_0+(\hat{\pi}-1)\La_1$. The Hamiltonians are expected to commute with the diagonal action 
of $\hat{\mathfrak{sl}}_2$ and therefore the model is reduced to the coset theory. 
The spaces of singular vectors are irreducible modules $M^{Vir}_{c,\Delta}$ of 
central charge $c$ and highest weight $\Delta$
over the Virasoro algebra given by the Sugawara construction.

If 
\begin{align*}
L_{1,0}\otimes V_{a,b}= V_{a+1,b}\otimes M^{Vir}_{c,\Delta}+\dots,
\end{align*}
then it is well known that $k=a+b$ and
\begin{align*}
c=1-\frac{6}{(k+2)(k+3)}, \qquad \Delta=\frac{b(b+2)}{4(k+2)(k+3)}.
\end{align*}
Substituting $a=r-\hat{\pi}-2$,  $b=\hat{\pi}-1$, 
we see that the parameters $(c,\Delta)$
of the Virasoro module coincide with \eqref{cDelta}.

In particular, it is natural to expect that the Gaudin model for the 
affine $\widehat{\mathfrak{sl}}_2$ model is dual to the qKdV model. 
The Bethe ansatz equations in Lemma \ref{affine Gaudin} with $N_0=N_1=N$ 
are dual to the Yangian type equations \eqref{BAE limit} with $\tau=0$ 
and to the equations (3) of \cite{BLZ5},  
cf. also 
(2.17) of \cite{L}.
We note that this duality is similar to the known duality between trigonometric Gaudin and XXX models in the non-affine setting, see \cite{MTV3}. Note, that, the level $r-3$ of $\widehat{\mathfrak{sl}}_2$
on the Gaudin side is related to the parameter of the algebra on the XXX side. 

\medskip

We summarize the models discussed above in the following diagram.
\bigskip

$$
\xymatrix@=20mm{
*+[F]\txt{{\rm Local\ IM}=\\ $\E_1$\ {\rm XXZ\ model}\\
\eqref{BAE gl1} \ $M=2$}\ 
\ar[r]^{\tiny q_1,q_2\to 1}
\ar@{<-->}[d]
&
*+[F]\txt{
{\rm ILW\ model} \\ \eqref{BAE limit}}\
\ar[r]^{\tiny \tau\to 0}
\ar@{<-->}[d]
&
*+[F]\txt{
{\rm qKdV \ model}\\ \eqref{BAE limit}\ 
$\tau=0$}\
\ar@{<-->}[d]
\\
*+[F]\txt{
{\rm Non}\mbox{-}{\rm local\ IM}=\\
$\E_2$\ {\rm XXZ\ model} \\\eqref{BAE gl2}, \eqref{BAE gl2-2}\\
$M_0=1, M_1=0$}\
\ar[r]^{\tiny q_1,q_2\to 1}
&
*+[F]\txt{
{\rm Non\mbox{-}identified}\\ 
 \eqref{BAE limit gl2}, \eqref{BAE limit gl2-2} }\
\ar[r]^{\tiny \tau\to 0}
&
*+[F]\txt{
$\widehat{\mathfrak{sl}_2}$\ {\rm Gaudin\ model}\\
 \eqref{affine Gaudin BAE}, \eqref{affine Gaudin BAE 2}}
\\
}
$$

\bigskip

Here we give the names of the models, refer to corresponding Bethe ansatz equations and indicate dualities between various models
by the vertical dashed lines.

\subsection{Spectrum of non-local integrals of motion of qKdV model} 
According to \cite{FKSW}, it is expected that in the conformal limit the non-local integrals of motions 
$\ssG_{\nu,m}$ turn into non-local integrals of motion for the qKdV model of \cite{BLZ1}. By Corollary \ref{gl2 transfer}, the 
non-local integrals of motions $\ssG_{\nu,m}$ coincide with coefficients of transfer matrices  associated to 
$\E_2$. By Conjecture \ref{gl2 BAE conj}, the spectrum of these transfer matrices is given by Bethe ansatz equations 
\eqref{BAE gl2},\eqref{BAE gl2-2}. 
Therefore we expect that the affine Gaudin Bethe equations \eqref{affine Gaudin BAE}, \eqref{affine Gaudin BAE 2} 
describe the spectrum of non-local integrals of motion for the qKdV model of \cite{BLZ1}.

Recall that a
polynomial $f$ in variables $x_j,y_i$ is called supersymmetric if it is symmetric in variables $x_j$, symmetric in variables $y_i$ and $f|_{x_1=y_1=z}$ does not depend on $z$. Alternatively, 
supersymmetric polynomials are polynomials in power sums
${\rm p}_k=\sum_j{x_j^k}-\sum_i{y_i^k}$, $k\in\Z_{\geq 0}$.  
By definition, a supersymmetric polynomial in infinitely many 
variables is a polynomial in ${\rm p}_k$. The value $f(t,s)$ 
of a supersymmetric polynomial $f$ in infinitely many variables 
at sequences of any size $s=(s_i)_{i=1,\dots,N_0}$, $t=(t_j)_{j=1,\dots,N_1}$ 
is computed by replacing ${\rm p}_k$ with the number 
$\sum_{i=1}^{N_0}{s_i^k}-\sum_{j=1}^{N_1}{t_j^k}$.

Consider conformal limit of $\ab_0$ and $\ab_1$ in Conjecture \ref{gl2 BAE conj}.
Then Taylor coefficients of $u^{-n}$  are supersymmetric polynomials in $s_j, t_i$ of degree $n$.

\medskip 

Consider the non-local integral of motion $\bs G_n$, given by formula (2.16) of \cite{BLZ2} (also (56) in \cite{BLZ1}), acting   
on the
Virasoro Verma module with parameters \eqref{cDelta}. 
Recall also the function $\bs G_1^{(vac)}$  given by (61) in \cite{BLZ1}:
\begin{align}\label{Gvac}
\bs G_1^{(vac)}=\frac{4\pi^2\Gamma(1-2\beta)}
{\Gamma(1-\beta-2P)\Gamma(1-\beta+2P)}\,,
\end{align}
where $P$ is as in \eqref{L par}, and our $\beta$ is $\beta^2$ in \cite{BLZ1}.
\begin{conj}  

There exist
supersymmetric polynomials in infinitely many variables $r_n$ of degree $n$ such that 
every eigenvalue of non-local integral of motion $\bs G_n$ in the subspace of level $N$ has the form $r_n(\bar s,\bar t)$ 
for some solution $\bar s=(\bar s_i)_{i=1,\dots, N}$, $\bar t=(\bar t_j)_{j=1,\dots, N}$ of affine Gaudin Bethe ansatz equations 
\eqref{affine Gaudin BAE}, \eqref{affine Gaudin BAE 2} with $N_0=N_1=N$.

In particular, we have
\begin{align}\label{Q2 spectrum}
r_1(\bar s,\bar t)=
\bs G_1^{(vac)}
\left(1-\frac{1}{v}\,\frac{2(1-2\beta)}{1-\beta+2P}
\sum_{i=1}^N(\bar s_i-\bar t_i)\right).
\end{align}

For generic $r$ and $\hat \pi$ the number of solutions $\bar s_j, \bar t_i$  of 
affine Gaudin Bethe ansatz equations \eqref{affine Gaudin BAE}, \eqref{affine Gaudin BAE 2} with $N_0=N_1=N$   
equals the number of partitions of $N$. 
\end{conj}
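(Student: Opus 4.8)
The plan is to deduce the conjecture entirely on the spectral side, from the chain of identifications assembled earlier. The guiding principle is that the BLZ non-local integral $\bs{G}_n$ is the conformal limit of the elliptic non-local integral $\ssG_{\nu,n}$ of \cite{FKSW}, which by Corollary \ref{gl2 compare} equals the coefficient $G_{\nu,N}$ of the $\E_2$ transfer matrix, whose spectrum is governed by the Bethe ansatz of Conjecture \ref{gl2 BAE conj}. Accordingly I would (i) take the conformal limit of the eigenvalue formula in Conjecture \ref{gl2 BAE conj}, (ii) read off the supersymmetric polynomials $r_n$ and fix the normalization of $r_1$, and (iii) establish the counting via completeness and the coset reduction to the Virasoro Verma module.

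For the existence of the $r_n$ and for formula \eqref{Q2 spectrum}, I would start from
\[
T_\nu(u)=\varphi_\nu(u)\frac{Q_\nu(q_2^{-1}u)}{Q_\nu(u)}\sum_{\la\in\cP}\prod_{\square\in\la}\ab_{c_\nu(\square)}(q^{-\square}u)
\]
and expand in the scaling of Section \ref{secILW} followed by $\tau\to0$, in which $q_i\to1$ and the roots satisfy \eqref{affine Gaudin BAE}, \eqref{affine Gaudin BAE 2}. Taking $\log\ab_0$ and $\log\ab_1$ produces series in the power sums $\sum_i s_i^k$ and $\sum_j t_j^k$; because the two colours enter through the $A_1^{(1)}$ Cartan data with opposite signs dictated by the colour grading, the combinations surviving in the limit are exactly the supersymmetric power sums ${\rm p}_k=\sum_i \bar s_i^k-\sum_j \bar t_j^k$. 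Hence each Taylor coefficient of $u^{-n}$ is a polynomial in the ${\rm p}_k$, i.e. a supersymmetric polynomial $r_n$ of degree $n$, in parallel with the local case treated in Corollary \ref{eigvI}. For $r_1$ I would extract the coefficient of $u^{-1}$, obtaining $\sum_i(\bar s_i-\bar t_i)$ up to a scalar, and fix that scalar by matching the vacuum value against $\bs{G}_1^{(vac)}$ in \eqref{Gvac}, using the dictionary \eqref{cDelta}, \eqref{L par}; this reproduces \eqref{Q2 spectrum}.

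For the counting I would argue by completeness together with the coset reduction. By Lemma \ref{affine Gaudin} the equations \eqref{affine Gaudin BAE}, \eqref{affine Gaudin BAE 2} are the Bethe ansatz equations of the $\slth$ Gaudin model on $L_{1,0}\otimes V_{r-\hat\pi-2,\hat\pi-1}$, whose space of singular vectors is the Virasoro Verma module $M^{Vir}_{c,\Delta}$ with $(c,\Delta)$ as in \eqref{cDelta}. The choice $N_0=N_1=N$ selects the degree-$N$ component of this multiplicity space. For generic $r,\hat\pi$ the module $M^{Vir}_{c,\Delta}$ is irreducible, so its degree-$N$ component has dimension $p(N)$, the number of partitions of $N$. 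The counting statement is then precisely the assertion that the Bethe ansatz for this affine Gaudin model is complete at each degree, and I would seek to establish it along the standard route: realize the solutions as the joint spectrum of the commuting Gaudin Hamiltonians on $M^{Vir}_{c,\Delta}$, and prove that for generic parameters this spectrum is simple and every eigenvector is a Bethe vector, so that solutions and eigenvectors are in bijection.

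The main obstacle is completeness, which is exactly the ingredient left open even for the proven $\E_1$ Bethe ansatz of \cite{FJMM2}. Rigorously one must first prove Conjecture \ref{gl2 BAE conj}, so that the $\E_2$ Bethe equations capture the full spectrum, and then show that both the ILW limit (already controlled for $\E_1$ in Proposition \ref{prop:ILW}) and the subsequent $\tau\to0$ limit are bijections on solution sets, with no roots escaping to $0$ or $\infty$ and no collisions for generic $r,\hat\pi$. A secondary difficulty lies in the very first step of the strategy: the identification of $\bs{G}_n$ with the conformal limit of $\ssG_{\nu,n}$ is itself only expected from \cite{FKSW}, and would have to be made precise before \eqref{Q2 spectrum} could be asserted as an equality of operators rather than of Bethe expressions.
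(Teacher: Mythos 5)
The statement you were asked to prove is stated in the paper as a \emph{conjecture}, and the paper supplies no proof of it. The authors arrive at it by exactly the chain of identifications you describe: Corollary \ref{gl2 compare} identifying the elliptic non-local integrals $\ssG_{\nu,N}$ with the $\E_2$ transfer-matrix coefficients $G_{\nu,N}$, the conjectural Bethe ansatz of Conjecture \ref{gl2 BAE conj}, the ILW and conformal limits leading to Lemma \ref{affine Gaudin}, the coset reduction identifying the multiplicity space with the Virasoro module of parameters \eqref{cDelta}, and the expectation from \cite{FKSW} that the conformal limit of $\ssG_{\nu,n}$ reproduces $\bs G_n$ of \cite{BLZ1}. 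Your outline is therefore a faithful reconstruction of the paper's motivation rather than a proof, and the obstacles you list at the end --- the unproven Conjecture \ref{gl2 BAE conj}, completeness of the Bethe ansatz, control of the two successive limits on solution sets, and the operator-level (as opposed to formal) identification of $\bs G_n$ with the conformal limit of the elliptic non-local integrals --- are precisely the reasons the authors present this as a conjecture. Your derivation of supersymmetry of the $r_n$ from the colour structure of $\ab_0,\ab_1$, your normalization of $r_1$ against $\bs G_1^{(vac)}$ in \eqref{Gvac}, and your account of the $p(N)$ count via irreducibility of $M^{Vir}_{c,\Delta}$ for generic $r,\hat\pi$ all match the intended reading of \eqref{Q2 spectrum} and of the counting claim. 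In short, there is no defect in your reasoning relative to what the paper actually establishes, because the paper establishes nothing here; but what you have written is a correct heuristic, not a proof, and you have correctly identified where a proof would have to begin.
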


\subsection{The BLZ oper}

Let us remark that the Bethe ansatz equations in Lemma \ref{affine Gaudin} for $\bar t_j$ 
can be interpreted as Gaudin equations 
associated to $\mathfrak{sl}_2$ with values in the tensor product of 
Verma module of highest weight $\hat{\pi}-1$ evaluated at $0$,   
and $3$-dimensional modules 
(of highest weight $2$) evaluated at $\bar s_k$, $k=1,\dots, N_0$. 
Then, it is well-known that the eigenvalues $\gamma_i$ of the quadratic 
Gaudin Hamiltonian associated to representation at $\bar s_i$ is
\begin{align*}
\gamma_i=\frac{\hat{\pi}-1}{\bar s_i}
+\sum_{k=1,\ k\neq i}^{N_0}\frac{2}{\bar s_i-\bar s_k}
-\sum_{k=1}^{N_1}\frac{2}{\bar s_i-\bar t_k}\,.
\end{align*}
Therefore the equations for  $\bar s_i$ can be rewritten as
\begin{align}\label{gamma}
\gamma_i=\frac{1}{\bar s_i-v}+\frac{r-3}{\bar s_i}.
\end{align}
Further, to the solution $\bar t_1,\dots,\bar t_{N_1}$
of the $\mathfrak{sl}_2$ 
Gaudin equations, one often associates a scalar differential operator of order 2 which has the form:
\begin{align}\label{oper}
\mc D=\partial_z^2-\frac{l(l+1)}{z^2}+\sum_{i=1}^{N_0}\frac{\gamma_i}{z}
-\sum_{i=1}^{N_0}\left(\frac{2}{(z-\bar s_i)^2}+\frac{\gamma_i}{z-\bar s_i}\right),
\end{align}
where $2l=\hat{\pi}-1$.

Such a differential operator is called an oper. In general, an oper is 
a connection of special form on a principal $G$ bundle defined for each eigenvector of Gaudin system associated to simple Lie algebra $\g$ with corresponding Lie group $G$. In type $A$ opers reduce to scalar differenttial operators.

The oper \eqref{oper} looks tantalizingly similar to the BLZ oper from 
\cite{BLZ5},
see also (2.17) in \cite{L}. Moreover, both opers correspond to the {\it same} eigenvector. 
However, they seem to be fundamentally different in several ways. First, the condition \eqref{gamma} is different from the BLZ condition $\gamma_i z_i=\kappa$. Next, the kernel of \eqref{oper} consists of quasipolynomials (up to a trivial factor), while the BLZ oper has a singularity of order 3 at infinity and transcendental solutions. Finally, the sum $\sum_j \bar s_j$ is related to the eigenvalue of a non-local integral of motion,
see \eqref{Gvac},
whereas the sum $\sum_j z_j$ in the BLZ oper is related to the eigenvalue of a local integral of motion.

Probably, it is fair to say that the BLZ oper and \eqref{oper} are in some sense dual to each other. It is highly desirable to clarify this correspondence.

\appendix

\section{Appendix}

We prove here a combinatorial identity used in the text. 
We use the standard notation $|\la|=\sum_i\la_i$ and
$$
(z)_m=\prod_{s=0}^{m-1}(1-zp^s), \qquad (z)_\infty=\prod_{s=0}^\infty (1-zp^s),
$$
where $m\in\Z_{\geq 0}$.

\begin{prop}\label{identity} We have an equality of formal power series  in variables $p,q_1,q_3$:
\begin{align*}
\sum_{\la\in\mc P} p^{|\la|}
\Bigl(\frac{1}{(1-q_1)(1-q_3)}-\sum_{(i,j)\in\la}q_1^{i-1}q_3^{j-1}\Bigr)
=\frac{(pq_1q_3)_{\infty}}{(q_1)_\infty(q_3)_\infty}.
\end{align*}
\end{prop}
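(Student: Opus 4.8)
The plan is to exchange the summation over partitions with the summation over nodes and first simplify the bracketed expression for a \emph{fixed} $\la$. Summing the geometric series across the columns of each row gives $\sum_{(i,j)\in\la}q_1^{i-1}q_3^{j-1}=\frac{1}{1-q_3}\sum_{i\ge 1}q_1^{i-1}(1-q_3^{\la_i})$, where the sum over $i$ is effectively finite because $\la_i=0$ for $i>\ell(\la)$. Combining this with $\frac{1}{1-q_1}=\sum_{i\ge1}q_1^{i-1}$ makes the bracket telescope to
\[
\frac{1}{(1-q_1)(1-q_3)}-\sum_{(i,j)\in\la}q_1^{i-1}q_3^{j-1}=\frac{1}{1-q_3}\sum_{i\ge1}q_1^{i-1}q_3^{\la_i}.
\]
This is the key first move: it converts the signed difference into a manifestly summable positive expression and isolates the dependence on $\la$ through the single row lengths $\la_i$. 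The identity is thereby reduced to computing $T=\sum_{\la\in\mc P} p^{|\la|}\sum_{i\ge1}q_1^{i-1}q_3^{\la_i}=\sum_{i\ge1}q_1^{i-1}T_i$ with $T_i=\sum_{\la} p^{|\la|}q_3^{\la_i}$, and showing $\frac{1}{1-q_3}T$ equals the right-hand side.

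Next I would evaluate $T_i$ by slicing the Young diagram along the $i$-th row. Fixing $\la_i=m$, the rows above $i$ decompose as a rectangle of height $i$ and width $m$ together with an arbitrary partition having at most $i-1$ parts, while the rows below form an arbitrary partition with all parts $\le m$; these contribute generating functions $1/(p)_{i-1}$ and $1/(p)_m$ respectively. This bijection yields
\[
T_i=\frac{1}{(p)_{i-1}}\sum_{m\ge0}\frac{(q_3p^{\,i})^m}{(p)_m}.
\]
Euler's identity $\sum_{m\ge0}z^m/(p)_m=1/(z)_\infty$ then gives $T_i=\frac{1}{(p)_{i-1}(q_3p^{\,i})_\infty}$. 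Rewriting $(q_3p^{\,i})_\infty=(q_3)_\infty/(q_3)_i$, shifting the index to $n=i-1$, and using $(q_3)_{n+1}=(1-q_3)(q_3p)_n$, I obtain $T=\frac{1-q_3}{(q_3)_\infty}\sum_{n\ge0}\frac{(q_3p)_n}{(p)_n}q_1^n$.

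Finally, the $q$-binomial theorem $\sum_{n\ge0}\frac{(a)_n}{(p)_n}z^n=\frac{(az)_\infty}{(z)_\infty}$ applied with $a=q_3p$ and $z=q_1$ produces $T=\frac{1-q_3}{(q_3)_\infty}\cdot\frac{(pq_1q_3)_\infty}{(q_1)_\infty}$, and multiplying by $\frac{1}{1-q_3}$ delivers exactly $\frac{(pq_1q_3)_\infty}{(q_1)_\infty(q_3)_\infty}$.

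I expect the computation to be essentially routine once the telescoping simplification is spotted; the two places needing genuine care are the combinatorial bookkeeping in the row-slicing factorization of $T_i$ (correctly identifying the two free partitions, their size constraints, and the rectangle contribution $p^{\,im}$, together with the index shift) and the justification that all rearrangements are valid as an equality of \emph{formal} power series in $p,q_1,q_3$. In particular one must note that for each fixed total degree in $p$ only finitely many partitions contribute, so the interchange of summations, the geometric expansion of $1/(1-q_1)$, and the termwise use of Euler's and the $q$-binomial identities are all legitimate. This formal-power-series justification is the one step I would take pains to state explicitly rather than treat as obvious.
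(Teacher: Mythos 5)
Your proof is correct, but it takes a genuinely different combinatorial route from the paper's. You first telescope the bracket row by row, reducing the left side to $\frac{1}{1-q_3}\sum_{\la}p^{|\la|}\sum_{i\ge1}q_1^{i-1}q_3^{\la_i}$, and then evaluate $T_i=\sum_\la p^{|\la|}q_3^{\la_i}$ by slicing the Young diagram at its $i$-th row (an $i\times m$ rectangle, a partition with at most $i-1$ parts on top, a partition with parts $\le m$ below), which gives $T_i=1/\bigl((p)_{i-1}(q_3p^i)_\infty\bigr)$; the $q$-binomial theorem then finishes the computation. The paper instead expands $\frac{1}{(1-q_1)(1-q_3)}$ as $\sum_{a,b\ge0}q_1^aq_3^b$ and interchanges sums to get $\sum_{a,b\ge0}q_1^aq_3^b\sum_{\la\not\ni(a,b)}p^{|\la|}$, evaluates the inner sum by the Durfee-square decomposition of partitions avoiding a fixed node, and after two resummations reduces to the same $q$-binomial identity $\sum_{m\ge0}\frac{q_1^m(q_3)_m}{(p)_m}=\frac{(q_1q_3)_\infty}{(q_1)_\infty}$, which it verifies at the points $q_3=p^s$ by induction. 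Both arguments rest on Euler's identity $\sum_m z^m/(p)_m=1/(z)_\infty$ plus the $q$-binomial theorem; the difference is purely in the combinatorial preprocessing. Your row-slicing is arguably more direct — it avoids the hook/Durfee-square lemma and the double resummation — while the paper's node-complement formulation makes the symmetry in $q_1\leftrightarrow q_3$ manifest throughout. Your closing remarks on formal-power-series convergence (finitely many $\la$ per power of $p$, so all interchanges are legitimate) are exactly the right justification and are left implicit in the paper.
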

\begin{proof}
The LHS can be rewritten as 
$$
\sum_{a,b\geq 0} q_1^a q_3^b \sum_{\la\in \mc P,\ (a,b)\not\in \la} p^{|\la|}.
$$
The sum over $\la$ is the Poincare series of hook partitions which is easily computed by the Durfee square argument, see \cite{MM}:
$$
\sum_{\la\in \mc P,\ (a,b)\not\in \la} p^{|\la|}
=\sum_{s=0}^{\min\{a,b\}}\frac{p^{(a-s)(b-s)}}{(p)_{a-s}(p)_{b-s}}.
$$
Resumming the LHS one more time we see that it equals to 
$$
\sum_{m,n\geq 0}\frac{p^{mn}}{(p)_m (p)_n}\sum_{s=0}^\infty q_1^{m+s}q_3^{n+s}=\frac{1}{1-q_1q_3}\sum_{m,n\geq 0}\frac{p^{mn}q_1^mq_3^n}{(p)_m (p)_n}.
$$
Using the obvious identity
$$
\sum_{n\geq 0}\frac{z^n}{ (p)_n}=\frac{1}{(z)_\infty}
$$
with $z=p^mq_3$, we, therefore, reduce the identity in the proposition to 
$$
\sum_{m\geq 0}\frac{q_1^m(q_3)_m}{(p)_m}=\frac{(q_1q_3)_\infty}{(q_1)_\infty}\,.
$$
It suffices to check this identity at the 
points $q_3=p^s$, $s=1,2,\dots$.
Then  it reduces to an analog of the Newton binomial formula 
which is readily proved by induction on $s$. The proposition follows.
\end{proof}

{\bf Acknowledgments.}
The research of BF is supported by 
the Russian Science Foundation grant project 16-11-10316. 
MJ is partially supported by 
JSPS KAKENHI Grant Number JP16K05183. 
EM is partially supported by a grant from the Simons Foundation  
\#353831.

EM and BF would like to thank Kyoto University
for hospitality during their visits when this work was started. 
EM would like to thank Rikkyo University for hospitality during his visit while working on this project.

\bigskip

\end{document}